\definecolor{gr}{rgb}   {0.,   0.69,   0.23 }
\definecolor{bl}{rgb}   {0.,   0.5,   1. }
\definecolor{mg}{rgb}   {0.85,  0.,    0.85}
\definecolor{yl}{rgb}   {0.8,  0.7,   0.}
\definecolor{or}{rgb}  {0.7,0.2,0.2}
\newcolumntype{P}[1]{>{\centering\arraybackslash}p{#1}}
\newcolumntype{M}[1]{>{\centering\arraybackslash}m{#1}}
\tikzset{
	dot/.style={circle,fill=black,draw=black,inner sep=0pt,minimum size=0.5mm},
	>=stealth,
	}
\tikzset{
	dot2/.style={circle,fill=black,draw=black,inner sep=0pt,minimum size=0.2mm},
	>=stealth,
	}
\tikzset{
	ddot/.style={circle,fill=white,draw=black,inner sep=0pt,minimum size=0.8mm},
	>=stealth,
	}
\tikzset{decision/.style={ 
        draw,
        diamond,
        aspect=1.5
    }}
\tikzset{dia2/.style
={diamond,fill=white,draw=black,inner sep=0pt,minimum size=1mm},
	>=stealth,
	}
\tikzset{dia/.style
={star,fill=black,draw=black,inner sep=0pt,minimum size=1mm},
	>=stealth,
	}
\tikzset{dia/.style
={diamond,fill=black,draw=black,inner sep=0pt,minimum size=1.3mm},
	>=stealth,
	}
\def\<#1>{\xusebox{#1}}
\tikzset{
    position/.style args={#1:#2 from #3}{
        at=(#3.#1), anchor=#1+180, shift=(#1:#2)
    }
}
\newtheorem{theorem}{Theorem} [section]
\newtheorem{lemma}[theorem]{Lemma}
\newtheorem{proposition}[theorem]{Proposition}
\newtheorem{remark}[theorem]{Remark}
\newtheorem{definition}{Definition}
\newtheorem{corollary}[theorem]{Corollary}
\newtheorem*{ack}{Acknowledgments}
\DeclareMathOperator*{\supp}{supp}
\newcommand{\om}{\omega}
\newcommand{\Om}{\Omega}
\newcommand{\noi}{\noindent}
\newcommand{\Z}{\mathbb{Z}}
\newcommand{\R}{\mathbb{R}}
\newcommand{\T}{\mathbb{T}}
\newcommand{\N}{\mathbb{N}}
\newcommand{\F}{\mathcal{F}}
\newcommand{\dl}{\delta}
\newcommand{\eps}{\varepsilon}
\newcommand{\kk}{\kappa}
\newcommand{\g}{\gamma}
\newcommand{\ft}{\widehat}
\newcommand{\wt}{\widetilde}
\newcommand{\dx}{\partial_x}
\newcommand{\dt}{\partial_t}
\renewcommand{\H}{\mathcal{H}}
\newcommand{\sgn}{\textup{sgn}}
\newcommand{\cK}{\mathcal{Q}}
\newcommand{\Od}{\Om^{({\rm d}, \dl)}}
\newcommand{\Os}{\Om^{({\rm s}, \dl)}}
\newcommand{\Sdt}{S_{\dl}^{{\rm (d)}}(t) }
\newcommand{\Sst}{S_{\dl}^{{\rm (s)}}(t) }
\renewcommand{\l}{\ell}
\newcommand{\les}{\lesssim}
\newcommand{\ges}{\gtrsim}
\newcommand{\ZP}{\Z_{\geq 0}}
\newcommand{\jb}[1]
{\langle #1 \rangle}
\numberwithin{equation}{section}
\numberwithin{theorem}{section}
\newcommand{\Rst}{ \R\backslash\{0\}  }
\newcommand{\G}{\mathcal{G}_\delta}
\newcommand{\M}{\mathcal{M}}
\newcommand{\Mssss}{M^{\frac 34,\dl}_{T}}
\newcommand{\too}{\longrightarrow}
\newcommand{\1}{{\mathbf 1}}
\newcommand{\pshallow}{ p^{\rm (s)} }
\newcommand{\pdeep}{ p^{\rm (d)} }
 \DeclareMathAlphabet{\mathpzc}{OT1}{pzc}{m}{it}
\begin{document}

\baselineskip = 14pt

\title[Convergence of the ILW equation]
{Deep-water and shallow-water limits of the intermediate long wave equation}

\author[G.~Li]
{Guopeng Li}

\address{
Guopeng Li\\
School of Mathematics and Maxwell Institute for Mathematical Sciences\\
The University of Edinburgh\\
James Clerk Maxwell Building\\
The King's Buildings\\
Peter Guthrie Tait Road\\
Edinburgh\\ 
EH9 3FD\\United Kingdom} 
\address{
Department of Mathematics and Statistics\\
 Beijing Institute of Technology\\
 Beijing\\ China} 
\email{guopeng.li@ed.ac.uk}

\subjclass[2020]{35A01, 35A02, 35Q53, 35Q35.}

\vspace*{1mm}

\keywords{Intermediate long wave equation,
Benjamin-Ono, 
Korteweg-de Vries
equation,
limiting behaviours,
energy method, well-posedness}

\begin{abstract}

In this paper, we study the low regularity convergence problem for the intermediate long wave equation (ILW), with respect to the depth parameter $\delta>0$, on the real line and the circle. As a natural bridge between the Korteweg-de Vries (KdV) and the Benjamin-Ono (BO) equations, the ILW equation is of physical interest. We prove that the solutions of ILW converge in the $H^s$-Sobolev space for $s>\frac12$, to those of BO in the deep-water limit (as $\delta\to\infty$), and  to those of KdV in the shallow-water limit (as $\delta\to 0$). This improves previous convergence results by Abdelouhab, Bona, Felland, and Saut (1989), which required $s>\frac32$ in the deep-water limit and $s\geq2$ in the shallow-water limit. Moreover, the convergence results also apply to the generalised ILW equation, i.e.~with nonlinearity $\partial_x (u^k)$ for $k\geq 2$. Furthermore, this work gives the first convergence results of generalised ILW solutions on the circle with regularity $s\geq \frac34$. Overall, this study provides mathematical insights for the behaviour of the ILW equation and its solutions in different water depths, and has implications for predicting and modelling wave behaviour in various environments.

\end{abstract}

\maketitle
%
%
%
%

\section{Introduction}
\label{SEC:1}

\subsection{Background}
\label{SUB:bac}

The rigorous theory of internal wave propagation at the interface between two layers of immiscible fluids of differing densities has garnered significant attention from both mathematical and physical studies.
This is due to the system's simplicity as an idealisation of internal wave propagation, its challenging nature from a modelling perspective, and the mathematical and numerical difficulties that arise when analysing the system.
The Korteweg-de Vries equation (KdV) and the Benjamin-Ono equation (BO) are the most fundamental models for shallow-water wave and deep-water wave propagations, respectively. The intermediate long wave equation (ILW), on the other hand, models wave  behavior in different water depths, which builds a model-theoretical bridge between the BO equation and the KdV equation.
 The recent book by Klein-Saut  \cite{KS2022} and the survey article by Saut  \cite{Saut2019} provide a comprehensive overview of the subject, along with relevant references.

To be more precise, the ILW equation is a one-way propagation asymptotic model that describes internal waves at the interface between two layers of immiscible fluids, under the rigid lid assumption and with a flat bottom. The depth parameter is defined by the relative depths of the two fluid layers, and the interface between the layers is approximately governed by the ILW equation. It is therefore natural to expect that as the depth tends to zero and infinity, the ILW should converge to KdV (shallow-water limit) and BO (deep-water limit) respectively. However, the rigours justification of such convergences, in particular in the low regularity regime, raises mathematical challenges, which will be the main concern of this paper.

The convergence problem of ILW is rooted in the study of water waves \cite{Ben67, Benn66} and has drawn huge attention in recent years due to its wide connection with other branches of science, such as internal gravity waves, oceanography, atmospheric science and quantum field theory
\cite{KB, SAF, SH, Lip, LHA, CMH, MR80, MT88, OB, Ro, BLL1, BLL2}.
The derivation depends variously on the wave amplitude, wavelengths, and depth ratio of the two layers,
see \cite{BLS, CGK}.
In particular, each of the finite-depth solitary waveforms, wave speeds, and wavelengths varies with the depth parameter continuously, bridging the two limiting situations \cite{KKD}.
As a result,  
ILW provides a good understanding of the wave motions in different water depths and can be useful for various practical applications, such as predicting the behavior of ocean waves and designing coastal defense structures.

 When studying the nonlinear dispersive equations, it is important to comprehend the interplay between the nonlinear and dispersive effects that determine the behavior of solutions. This understanding is crucial in order to fully grasp the dynamics of the solutions. The ILW model, in particular, has been compared to laboratory experiments, as demonstrated in previous studies such as \cite{KKD, BLS}.
 Back to our convergence issue,
we are also interested in the limiting behaviour of fully nonlinear models of the ILW-type.
The fully nonlinear evolution models derived by Matsuno \cite{MAT92} highlights their importance in the modelling aspect.  
Thus, the study of the convergence of full nonlinear models has the potential to advance our knowledge in this field and contribute to a better understanding of internal wave propagation in general.
Other interesting convergence features of
the ILW model, such as 
the $N$-soliton solutions, Hamiltonian structure,
recursion scheme for the infinite number of conservation laws,
and an inverse scattering problem, etc; see \cite{AT, CL, JE, KUP, LR, CGK, SAK, KSA}.

%
%
%
%
%

From the mathematical perspective,
the convergence of the ILW solitary wave solutions has been well understood in the 1970s and 1980s \cite{AS, Jos, KKD, AFSS, Milo}.
Moreover, the   numerical simulations of ILW convergence behaviours in \cite{KKD} and 
 the validity of deep-water limit in \cite{BD00},
suggested that the convergence of the ILW dynamics should hold 
not only for  the solitary wave solutions but also
for a general class of solutions.
Later in \cite{ABFS}, 
the convergence of ILW solutions were verified in $H^s$-Sobolev spaces with sufficient high regularity (see more discussion below).
In this work, our aim is to establish a suitable approach to study the low regularity ILW convergence problems and  
also the method is  capable of handling  ILW-type  associated with  general nonlinearities (see Section~\ref{SEC:limit1}).
The results in this paper represent  the {\it first low regularity convergence} for ILW-type dynamics on the torus, 
however, there is still wide range open until we reach the critical space  $H^{-1/2}$ 
(on both $\R$ and $\T$, with any depth parameter $\dl>0$),
which is recently identified by the author and his collaborators \cite{CFLOP24}.
They showed ill-posedness in $H^s$ when $s < -1/2$
in the sense of failure of
continuity of the data-to-solution map, and proved a-priori bounds on smooth solutions for
$-1/2<s<0$.
It is also worth mentioning here that the author and his collaborators, 
Oh, Zheng, and Chapouto, have contributed to the field by studying 
the convergence of ILW-type dynamics from a statistical perspective \cite{LOZ, CLOZ}. 

%

%
%
%
%
In the past decades, there has been significant progress in the study of BO and KdV with low regularity data, 
see for instance \cite{KPV96, MP12, BIT11, ZH97, IK07}. In particular, both BO and KdV are globally well-posed in $L^2$. However, to our best knowledge, the rigorous mathematical justification of the convergence of the ILW (or ILW-type equation), in particular in the low regularity regime, is still widely open. The main purpose of this paper is to improve our understanding along this line of research.
The first technique used to justify the ILW dynamical convergence in \cite{ABFS} was based on the classical energy method. However, this approach did not 
make use of the dispersive effects
and as a result, a regularity restriction of $s>\frac32$ was required to construct uniform control over the deep-water solutions, while a higher regularity restriction of $s\geq 2$ (via higher-order conservation laws) was needed to construct uniform control over the shallow-water solutions.
Bourgain's Fourier restriction norm method \cite{B93a, B93b} enables us to study the low regularity initial data problems. However, this method is not suitable for our convergence problem, as the solution space $X^{s,b}$-type (as defined in \eqref{deX}) depends on the depth parameter and therefore not suitable for comparing different solutions with different fluid depths.
The concept of ``unconditional well-posedness" introduced by Kato \cite{KA95, KA96} allows for the construction of solutions in $C_TH^s$
regardless of the dispersive for fixed depth parameters, but still not enough for the convergence problem.
Nevertheless, even the combination of the Fourier restriction norm method and unconditional well-posedness is not sufficient for our convergence problem. The main novelty of the argument presented in Section~\ref{SEC:limit1} is that we must always ensure that the difference between two solutions (corresponding to different depth parameters) can be absorbed by leveraging the structure of the equation and our choice of function space. In particular, further development of the ILW dispersion structure is required, as stated in Lemma~\ref{LEM:ca1b} and Lemma~\ref{LEM:ca1c}. Finally, by combining all of these ideas, we construct a perturbative analysis to establish our desired convergence of the ILW-type of dynamics.

This work is important for improving our understanding of the behaviour of internal wave propagation at the interface between two immiscible fluids of differing densities and has practical implications for predicting and modelling wave behaviour in various water depths.
Moreover,
by providing a rigorous mathematical justification for the convergence of the ILW-type equation  with rough initial data, this paper aims to contribute to the field and advance our understanding of internal wave propagation. Additionally, this study aims to bridge the gap between the mathematical and physical communities and has the potential to inspire future research and practical applications.

\subsection{Intermediate long wave equation}
The ILW equation is given by:
\begin{align}
\label{ILW1}
\begin{cases}
\dt u -
 \G \partial_x^2 u  =  \dx (u^2)    \\ 
u|_{t = 0} = u_0
\end{cases}
\qquad ( t, x) \in \R \times \M,
\end{align}

\noi
where $0<\delta<\infty$,  $u:\R\times \M\to \R$
and $\M =\R$ or $
\T = \R/(2\pi \Z)$.
Here, the operator $\G$ is defined by
\[
\G=-\coth(\dl \dx)-\dl^{-1} \dx^{-1},
\]
which characterises the phase speed and it
 is understood as the Fourier multiplier  by
\begin{equation*}
\label{OP}
\ft{\G f}(n)
:=-i \Big( \coth( \delta n )-\frac{1}{ \delta
n}    \Big)
 \ft f(n)
 \qquad \text{for $ n \in \widehat{\M} $} ,
\end{equation*}

\noi
$\coth(x)$ is the hyperbolic cotangent function defined by 
$
\coth (x)=\frac{e^{x}+e^{-x}}{e^{x}-e^{-x}}, $ 
$x\in \R\backslash\{0\},$ with the convention $\coth(x)-\frac 1x=0$ for $x=0$,
and
$\widehat \M$ is the 
Pontryagin dual of $\M$, i.e., $\widehat \M=\R$, when $\M=\R$, and 
$\widehat \M=\Z$, when $\M=\T.$

\begin{remark}
	\rm
	
%
%
%
	
%
%
	 Joseph \cite{Jos} showed that the ILW equation is a special form of the Whitham equation (on $\mathbb{R}$) \cite{W67} 
	\begin{equation*}
\dt u + \dx \int_{-\infty}^{\infty}  K(x-y)  u(t,y) dy = \dx(u^2),
\end{equation*}	 
by utilising the dispersion relation derived in \cite{Phis} and it can be seen by considering the ILW operator $\G$ as an integral kernel (on $\mathbb{R}$):
\begin{align*}
	\G f(x)&=
	-\frac{1}{2\dl} \,\text{\,p.v.} \int_{-\infty}^\infty  \Big[ \coth\Big(\frac{\pi(x-y)}{2\dl} \Big) - {\rm sgn}(x-y)  \Big] f(y) dy.
	\end{align*}

%
%
%
%
%
%

\end{remark}

\subsection{Deep-water and shallow-water limits of generalised ILW}
\label{SUB:LE}

  In the following,
we consider the generalised intermediate long wave equation (gILW) on $\M$:
\begin{equation}
\label{gILW1}
\begin{cases}
\dt u -   \G  (\dx^2 u)= \dx(u^k) \\
u|_{t = 0} = u_0,
\end{cases}
\qquad (t, x) \in \R \times \M,
\end{equation}

\noi
where $k \geq  2$ is an integer. 
When $k = 2$, the equation \eqref{gILW1} corresponds to ILW \eqref{ILW1}, while,
when $ k = 3$, it is known as the modified ILW equation.


Our main goal is to study the deep-water limit $(\dl \to \infty)$ and the shallow-water limit $(\dl \to 0)$
of solutions to gILW \eqref{gILW1} with rough initial data.
 In the following, let us briefly go over the formal derivation of the
limiting equation in each of the deep-water and shallow-water limits,  for further details, we refer readers   to \cite{LOZ}.
With a slight abuse of notation,
\begin{align*}
\ft \G(n) 
=    - i   \Big( \coth(\dl n)-\frac{1}{\dl n} \Big).
\end{align*}

\noi
\textbf{Deep-water limit} 
($\delta \to \infty$)

\smallskip
\indent
In this case, one can show that
\begin{align*}
\lim_{\dl\to\infty}\ft \G(n)
= - i \sgn(n) 
\end{align*}

\noi
for any $n \in \M$.
The deep-water limit is  sending $\dl \to \infty$, 
and
the  gILW equation \eqref{gILW1}
converges to 
the following generalised BO (gBO) on~$\M$:
\begin{align}
\partial_t u     -  \mathcal{H} (\partial_x^2 u)
=  \dx( u^k )   ,
\label{gBO}
\end{align}

\noi
where 
$\mathcal{H}$ is the spatial Hilbert transform 
defined
by $
\ft{\H f}(n)=-i \sgn(n)\ft f(n) .$
Formally speaking, 
one can view the gILW equation \eqref{gILW1} as
the perturbed gBO equation 
\begin{equation}
\label{PE1}
\partial_t u - \H  (\partial_x^2 u) +
\cK_\dl \dx u
=  \dx(u^k),
\end{equation}

\noi
where  $\cK_\dl = (\H-  \G )\dx $ is defined 
as a Fourier multiplier operator with symbol
\begin{align}
q_{\dl}(\xi) 
=   \dl^{-1} -  \xi  \coth(\dl \xi)+ |\xi|.
\label{PO1a}
\end{align}
In order to prove rigorous convergence, it is necessary to show that $\cK_\dl \dx$ tends to zero in some suitable sense.
 In view of equation~\eqref{PO1a}, we have $|q_{\dl}(\xi)|\leq \frac1\dl$, which suggests that in the deep-water regime $\dl \gg1$, long waves with relatively small frequencies $|n| \ll \dl$ closely approximate long waves in infinitely deep water ($\dl = \infty$).

\medskip
\noi
\textbf{Shallow-water limit} ($\dl\to 0$).
\smallskip

\indent
By using the power series of $\coth(x)$,
a direct computation shows that, 
for $n \in\Rst  $, we have 
\begin{align}
\begin{aligned}
\ft{\G \dx^2 u}(n) 
& =  i \Big( \coth( \delta n )-\frac{1}{ \delta n} \Big)
n^2 \ft{u}(n) \\
&  =  i \frac{ \dl}{3} n^3 \ft{u}(n) + o(1), 
\end{aligned} \label{asy1}
\end{align}

\noi
as $\dl \to 0$.
The identity \eqref{asy1} shows that, 
the dispersion in \eqref{ILW1} disappears as $\dl \to 0$, 
formally yielding the inviscid Burgers equation\footnote{inviscid Burgers’ equation: $
	\dt u +  \dx(u^2) = 0$.
}
in the limit.
In order to circumvent this issue, we introduce the following scaling transformation
for each $\dl > 0$, \cite{ABFS}:
\begin{align}
v(t,x) = {3}{ \dl^{\frac{1}{1-k}} } u( {3}{\dl^{-1} } t,x),
\label{trans}
\end{align}

\noi
which leads to the following scaled gILW:
\begin{equation}
\label{ILW3}
\dt v   -  \frac{3}{ \dl}  \G  \dx^2 v= \dx(v^k) .
\end{equation}

\noi
Namely, $v$ is a solution to the scaled gILW \eqref{ILW3}
(with the scaled initial data)
if and only if $u$ is a solution to the original gILW \eqref{ILW1}.
In view of~\eqref{asy1},
the scaled gILW~\eqref{ILW3} formally converges to the  following generalised
KdV equation (gKdV) on~$\M$:
\begin{align}
\dt v + \dx^3 v = \dx(v^k) . 
\label{KDV}
\end{align}
We remark here that it is natural and physically meaningful   to perform
the scaling transformation \eqref{trans}.
See discussions in \cite[p.\,5]{LOZ}, \cite[(1.7)]{SC05} and \cite{SAK}.

%
%
%
%
%
%
%
%
%
%
%
%
%
%
%
%
%
%
%
%
%

\subsection{Main results}

In the work of Abdelouhab, Bona, Felland, and Saut \cite{ABFS}, it was shown that the (scaled) ILW dynamics converges to the BO dynamics in the deep-water limit and to the KdV dynamics in the shallow-water limit. However, these results were limited to high-regularity solutions, with convergence established in $C(\mathbb{R}; H^s(\M))$ for $s>\frac32$ (as $\dl\to\infty$) and $s\geq 2$ (as $\dl\to 0$), respectively. 
The objectives of this paper are twofold:
(i) to extend the convergence results to low-regularity solutions with $s>\frac12$ for the ILW dynamics, in which the nonlinearity is $\dx(u^2)$,
and (ii) to incorporate the convergence to the gILW dynamics with $s>\frac34$, where nonlinearity is $\dx(u^k)$ for $k\geq2$,
 in both deep-water and shallow-water limits (as stated in Theorems \ref{THM:1} and \ref{THM:2}). 
In particular, this establish  the first convergence results with rough periodic data. 
For additional information regarding the convergence in $\mathbb{R}$, please refer to the works \cite{GW, HW}.

The approach for establishing the convergence of   ILW-type consists of two steps. For clarity in the explanation, we will focus our discussion on the deep-water limit of the gILW equation \eqref{gILW1} (unless otherwise specified).

\smallskip
\noi
{\bf Step 1:}
{\bf Establish the uniform in $\dl$ control over solutions.}\rule[-2.5mm]{0mm}{0mm}
\\
\indent
To construct a solution $u_{\dl}$ for the   gILW equation \eqref{gILW1} for a given initial data $u_{0}$ and a fixed parameter $\dl > 0$, we employ the method developed by Molinet-Tanaka \cite{MT}.
This directly implies the local well-posedness of the gILW equation \eqref{gILW1} for a fixed depth parameter $0\leq \dl \leq \infty$, where $\dl=\infty$ corresponds to the gBO equation and $\dl=0$ corresponds to the gKdV equation. In particular, the following lemma holds for a fixed depth parameter:
\begin{lemma}[Fixed $\dl$ well-posedness \cite{MT}]
	\label{LM:LWP}
	Let $s\geq \frac34$ and $k\geq 2$. Then, for any fixed  $0\leq\dl\leq \infty$,
	the gILW equation \eqref{gILW1} is   unconditionally locally well-posed in $H^{s}(\M)$.
	The maximal time of existence $T=T( \|u_{0}\|_{H^{\frac34}(\M)},\dl) > 0$ depends on the initial data and the parameter $\dl$.
\end{lemma}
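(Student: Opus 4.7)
The plan is to adapt the short-time Fourier restriction framework of Molinet--Tanaka to the fixed-$\dl$ setting. The dispersion symbol $\phi_\dl(n) := n^2\bigl(\coth(\dl n) - 1/(\dl n)\bigr)$ satisfies $\phi_\dl(n) \sim n|n|$ for $|n| \gg 1/\dl$ (BO-type) and $\phi_\dl(n) \sim \dl n^3/3$ for $|n| \ll 1/\dl$ (KdV-type), so for every fixed $\dl \in (0,\infty)$ the high-frequency dispersion is of BO type, which forces the threshold $s \geq \frac{3}{4}$ matching that of gBO. The argument for the endpoint $\dl = 0$ reduces (after the scaling \eqref{trans}) to the classical gKdV theory and the case $\dl = \infty$ to gBO, both of which are well-posed at $s \geq \frac{3}{4}$, so the novel content is the intermediate regime $0 < \dl < \infty$.

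First I would build short-time Bourgain spaces $F^{s,\dl}_T$ and $N^{s,\dl}_T$ of Ionescu--Kenig--Tataru type, with frequency-dependent time localisations adapted to $\phi_\dl$, and establish the usual linear estimates $\|S_\dl(t)u_0\|_{F^{s,\dl}_T} \les \|u_0\|_{H^s}$ together with the corresponding inhomogeneous bound. Second, I would prove the multilinear estimate $\|\dx(u^k)\|_{N^{s,\dl}_T} \les_\dl \|u\|^k_{F^{s,\dl}_T}$ for $s \geq \frac{3}{4}$: for $k = 2, 3$ the high-low interaction requires a $\dl$-dependent gauge transformation that reduces to Tao's BO gauge in the limit $\dl \to \infty$, while for $k \geq 4$ a dyadic analysis exploiting the resonance function $\Phi_\dl := \phi_\dl(n_1 + \cdots + n_k) - \sum_j \phi_\dl(n_j)$ suffices. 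An energy estimate of the form $\|u\|^2_{L^\infty_T H^s} \les \|u_0\|^2_{H^s} + T^{\theta}\|u\|^{k+1}_{F^{s,\dl}_T}$ then closes the a priori bound, and a Bona--Smith regularisation produces a local solution on $[0,T]$ with $T = T(\|u_0\|_{H^{3/4}}, \dl)$.

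The main obstacle is upgrading to \emph{unconditional} uniqueness: a given $C_T H^s$ solution need not a priori lie in $F^{s,\dl}_T$. The plan is a normal-form reduction on the interaction variable $v(t) := S_\dl(-t)u(t)$: on each non-resonant dyadic frequency interaction one integrates by parts in time to trade a factor of $\dx$ for the phase weight $|\Phi_\dl|^{-1}$, yielding exactly the derivative gain needed to close the estimates at the endpoint $s = \frac{3}{4}$, while the resonant contributions are handled by direct $L^\infty_t$ bounds using only $C_T H^{3/4}$ regularity. The delicate technical point is to verify that $\Phi_\dl$ remains coercive uniformly across the transition zone $|n| \sim 1/\dl$ where the dispersion interpolates between the KdV and BO regimes, ensuring that the normal-form series converges absolutely in $H^s$. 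Combining the a priori estimate with the resulting normal-form identity, applied to the difference of two $C_T H^s$ solutions, gives a Gronwall-type inequality with a small prefactor $T^\theta$, from which uniqueness follows on a short time interval and hence on $[0,T]$ by iteration.
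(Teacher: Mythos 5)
Your proposal takes a genuinely different --- and much heavier --- route than the paper, and as written it has two concrete gaps. The paper's own justification of this lemma is essentially a citation: one checks (Remark \ref{RM:odd}) that for each fixed $\dl>0$ the symbol $\pdeep_{\dl}(n)=n\big(n\coth(\dl n)-\tfrac1\dl\big)$ satisfies \cite[Hypothesis~1]{MT}, and then invokes \cite[Theorem~1.1]{MT}. That theorem is proved by the \emph{ungauged} improved energy method of \cite{MV15, MT}: one works in $M^{s,\dl}_T=L^\infty_T H^s\cap X^{s-1,1,\dl}_T$ with ordinary (not short-time, not Ionescu--Kenig--Tataru) Bourgain weights, closes an energy estimate using $L^4$ Strichartz bounds and lower bounds on the resonance function, and obtains unconditional uniqueness from the soft fact that, since $H^s$ is an algebra for $s>\frac12$, any $L^\infty_T H^s$ distributional solution satisfies the Duhamel formula and therefore automatically lies in $M^{s,\dl}_T$ (Lemma \ref{lem1} here, \cite[Lemma~4.7]{MT}). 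No gauge transform, no short-time spaces, no normal forms.

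The first gap in your plan is the gauge transformation. Tao's gauge \cite{Tao04} exploits the fact that the BO multiplier is $-i\sgn(n)$, i.e.\ locally constant away from the origin, so that conjugation by a complex exponential of $\dx^{-1}u$ cancels the worst low-high interaction $P_{\mathrm{low}}u\,\dx P_{\mathrm{high}}u$ \emph{exactly}. The ILW multiplier $\coth(\dl n)-\tfrac1{\dl n}$ is not locally constant; the exact cancellation fails and the leftover paradifferential error is precisely the term you were trying to remove. The correct substitute is the frequency-dependent renormalisation of \cite{HIKK}, a substantially different construction, and the paper flags exactly this obstruction in Remark \ref{RM:MD} as its reason for choosing the ungauged route. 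So ``a $\dl$-dependent gauge transformation that reduces to Tao's BO gauge in the limit'' is not a step you can take for granted; as stated it does not close the $k=2,3$ multilinear estimate. The second gap is unconditional uniqueness: your normal-form scheme needs (a) absolute convergence of the normal-form expansion, and control of the resonant terms, with only $C_TH^{3/4}$ regularity --- which at this threshold is exactly the hard point and is asserted rather than argued --- and (b) a bridge between the gauged existence theory and the ungauged uniqueness class, i.e.\ a proof that the solution produced by the gauged iteration coincides with the distributional solution of \eqref{gILW1} in which uniqueness is claimed. Neither is supplied. For a self-contained argument the efficient path is the paper's: verify \cite[Hypothesis~1]{MT} for $\pdeep_{\dl}$ (using $\pdeep_{\dl}(n)\sim n^2$ at high frequency and the resonance bounds of Lemmas \ref{lem_res1} and \ref{lem_res2}) and run the energy method of \cite{MV15, MT} directly.
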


To take the limit as the parameter $\dl$ approaches infinity, we prove that the solution map of the gILW equation \eqref{gILW1} is independent of $\dl$. Specifically, the local existence time $T$ does not depend on $\dl$, as stated in Theorem \ref{THM:1}. 
This can be achieved by upgrading Lemma \ref{LM:LWP} to be uniformly in $\dl$ and as a direct consequence of the uniform 
  local well-posedness, we can extract an uniform control over solutions of  gILW \eqref{gILW1}.

\smallskip
\noi
{\bf Step 2:}
{\bf Convergence of the gILW dynamics at the single trajectories.}\rule[-2.5mm]{0mm}{0mm}
\\
\indent
To show the convergence of the gILW solution,
we develop a perturbative argument in Section \ref{SEC:limit1}, which heavily relies on the structure of the ILW-type equation and the uniform (in $\dl$) bounds over the solution. 
Our goal is to prove that the family of gILW solutions $\{u_{\dl} \}_{\dl\geq1}$ forms a Cauchy sequence\footnote{
Alternatively, we can also directly take the difference between the gILW solution and the gBO solution and show it converges to 0 in an appropriate manner.
	Since we already know that gBO is the limiting equation.}
in $C([0,T]; H^{s}(\T))$.
Firstly, as stated in Lemma \ref{LEM:p2b}, the linear dispersion of the ILW-type equation behaves like the BO-type, uniformly for any $2\leq \dl \leq \infty$. 
we can reformulate the gILW equation as a perturbed gBO equation and then take the difference between two perturbed equations as presented in \eqref{ILW5}. 
Moreover, by a standard argument as in constructing the energy estimate, 
our analysis reduces to estimating the linear perturbation and nonlinear interaction. 
The linear perturbation is controlled by further exploring the structure of the ILW-type dispersion, 
while the nonlinear interaction appears as an energy-type estimate. 
Thus, we establish the convergence of our gILW solutions in the deep-water limit.

Theorems \ref{THM:1} and \ref{THM:2} establish
the first convergence result for the ILW equation \eqref{ILW1} with low regularity, representing an improvement over the previous work in \cite{ABFS}, which required $s>\frac32$, to the current requirement of $s>\frac12$. Additionally, we have established the convergence result for the gILW equation \eqref{gILW1} with a regularity of $s\geq \frac34$. This represents the first result of its kind on the torus $\T$.

Let $0<T<1$, we denote $\Phi^{\rm(d)}_{T, \dl}$ to  be the flow map for the gILW equation \eqref{gILW1}, which was constructed in \cite{MT} for fixed $\dl$.
For every subset $A\subset H^s$,
we define the flow map as follows:
\begin{equation}
\label{eq:Phi}
\Phi^{\rm(d)}_{T,\dl}(A)=\{  u(t,.)   \in H^s   |  \hbox{ where }
u(t,.) \hbox{ solves } \eqref{gILW1} \hbox{ for } 0<t\leq T \hbox{ with } u(0,.)\in A\}.
\end{equation}
With a slight change on the subscript of \eqref{eq:Phi} we 
denote $ \Phi_{T, {\infty}} $ to be the flow map for the gBO equation \eqref{gBO}.
The first contribution of this paper is the deep-water convergence:

\begin{theorem}[Deep-water theory]
	\label{THM:1}
	Let $k\geq 2$ and $u_{0}\in H^{s}(\M)$ for
	$s\geq \frac{3}{4}$, where $\M=\R \textup{ or }\T$.
	 Then, the following statements hold.
	 
				\smallskip
	\begin{itemize}
		\item[(i)]	
Let $2\leq \dl\leq \infty$. Then, for any $0<T<1$
 the solution map $\Phi^{\rm (d)}_{T,\dl}$ satisfies
\begin{align*}
	\| \Phi^{\rm (d)}_{T,\dl} (u_{0}) \|_{C([0,T];H^{s}(\M))} \leq 
	C(\|u_{0}\|_{H^{s}(\M)} ).
	\end{align*}

	\noi	
	The
	solution map $\Phi^{\rm (d)}_{T,\dl}: u_{0} \to u_{\dl} $ 
	is continuous from $H^{s}(\M)$ to $C([0, T ]; H^{ s}(\M) )$, uniformly on $\dl \in [2, \infty]$.
	Moreover,
	the local existence time 
	$T=T( \|u_0\|_{H^{\frac34}(\M)})>0$ is
	independent of $\dl$.

\smallskip
\item[(ii)]
Let
$ \Phi^{\rm (d)}_{T,\dl}   (u_{0})  = u_{\dl} $  denotes the solution of  gILW \eqref{gILW1} and
 $	\Phi_{T,\infty} ( u_{0})  = u_{\infty}$ denotes the solution of 
  gBO \eqref{gBO}.
	Then, we have 
	\begin{align*}
	\lim_{\dl \to \infty}
	\| u_\dl- u_{\infty} \|_{C([0,T]; H^s(\M))}=0.
	\end{align*}

	\end{itemize}

When we only consider $k = 2$, 
the statements {\rm (i)} and {\rm (ii)} hold true for $s> \frac{1}{2}$.
	
\end{theorem}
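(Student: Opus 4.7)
The overall strategy follows the two-step outline of Section~\ref{SUB:LE}: first establish a $\dl$-uniform local theory over $\dl\in[2,\infty]$, then upgrade it to strong convergence via a perturbative energy argument applied to the difference $u_\dl-u_\infty$.

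For part~(i), the key observation is that for $\dl\geq 2$ the ILW dispersion symbol $in\bigl(\coth(\dl n)-\tfrac{1}{\dl n}\bigr)$ is uniformly comparable to the BO symbol $in|n|$ (this is Lemma~\ref{LEM:p2b} referenced in the introduction). Accordingly, I would rerun the Molinet--Tanaka short-time scheme underlying Lemma~\ref{LM:LWP}, but in a resolution space whose modulation weight is measured against the BO dispersion rather than the $\dl$-dependent ILW dispersion. Since the multilinear estimates in that scheme use the dispersion only through the size of the resonance function and a high/low modulation decomposition, uniform comparability of the symbols promotes every multilinear estimate to one with $\dl$-independent constants. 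This yields the a~priori bound $\|\Phi^{\rm(d)}_{T,\dl}(u_0)\|_{C_T H^s}\leq C(\|u_0\|_{H^s})$ together with a $\dl$-independent existence time $T=T(\|u_0\|_{H^{3/4}})$, and the same scheme applied at $\dl=\infty$ produces the companion bound for $u_\infty$ solving gBO. In the case $k=2$, the stronger BO well-posedness machinery available down to $s>\tfrac12$ (gauge transform / Ionescu--Kenig type) replaces Molinet--Tanaka and extends~(i)--(ii) to that range.

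For part~(ii), I would rewrite gILW in the perturbed form
\begin{equation*}
\dt u_\dl - \H\dx^2 u_\dl + \cK_\dl \dx u_\dl = \dx(u_\dl^k),
\end{equation*}
subtract the gBO equation satisfied by $u_\infty$, and set $w_\dl := u_\dl-u_\infty$, giving
\begin{equation*}
\dt w_\dl - \H\dx^2 w_\dl = -\,\cK_\dl \dx u_\dl + \dx(u_\dl^k-u_\infty^k), \qquad w_\dl|_{t=0}=0.
\end{equation*}
The nonlinear source factors as $w_\dl$ times polynomials in $u_\dl$ and $u_\infty$ and, by the very multilinear estimates used in Step~1 combined with a commutator/energy-type bound, contributes $\les C(\|u_0\|_{H^s})\,\|w_\dl\|_{C_T H^s}$, a term that Gronwall can absorb. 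The genuinely new object is the linear source $\cK_\dl\dx u_\dl$; exploiting the frequency-by-frequency structure of $q_\dl$ captured by Lemmas~\ref{LEM:ca1b} and~\ref{LEM:ca1c} together with the $\dl$-uniform bound on $u_\dl$ from Step~1, I would show that $\cK_\dl\dx u_\dl\to 0$ in the relevant source norm as $\dl\to\infty$.

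The principal obstacle is precisely this linear perturbation: $\cK_\dl\dx$ loses one derivative while its symbol $q_\dl$ only enjoys the pointwise bound $|q_\dl(\xi)|\leq 1/\dl$. The remedy is to interpolate the uniform-in-$\xi$ bound $1/\dl$ against the frequency-adapted decay of $q_\dl$ (small for $|\xi|\ll\dl$, comparable to the BO symbol for $|\xi|\gg\dl$), producing an effective factor $\dl^{-\theta}$ with $\theta>0$ in the chosen Sobolev space. With this in hand, Gronwall applied to the $H^s$ energy inequality for $w_\dl$ yields the desired strong convergence in $C([0,T];H^s(\M))$, uniformly for both $\M=\R$ and $\M=\T$.
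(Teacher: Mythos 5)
Your Step 1 matches the paper's: rerun the ungauged Molinet--Tanaka/Molinet--Vento scheme with $\dl$-uniform Strichartz, resonance and energy estimates to get a $\dl$-independent time and a priori bound. (One caveat: for $k=2$, $s>\frac12$ the paper deliberately stays with the \emph{ungauged} Molinet--Vento method rather than gauge-transform/Ionescu--Kenig machinery, precisely because the gauge is tied to the exact BO dispersion and would not obviously survive uniformly in $\dl$; the paper lists the gauged route as an open direction for $s<\frac12$.)

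The genuine gap is in Step 2, in how you treat the linear perturbation $\cK_\dl\dx u_\dl$. Its symbol is $n\,q_\dl(n)$ with $0\le q_\dl(n)\le\min\bigl(|n|,\tfrac1\dl\bigr)$, and $q_\dl(n)\to\tfrac1\dl$ (not to something comparable to the BO symbol) as $|n|\to\infty$; the transition occurs at $|n|\sim\tfrac1\dl$, not at $|n|\sim\dl$. Consequently $|n\,q_\dl(n)|\le\min\bigl(n^2,\tfrac{|n|}{\dl}\bigr)$ is \emph{unbounded} in $n$ for every fixed $\dl$, so $\cK_\dl\dx u_\dl$ is not even in $H^s$ for $u_\dl\in H^s$ only, and no interpolation produces a factor $\dl^{-\theta}$ with $\theta>0$ without spending strictly positive extra derivatives: one has $\|\cK_\dl\dx u\|_{H^{s-(2-\alpha)}}\les\dl^{-\alpha}\|u\|_{H^s}$ for $\alpha\in[0,1]$, and $\alpha>0$ forces a loss of at least one derivative. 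Hence your ``Gronwall applied to the $H^s$ energy inequality'' cannot close. The paper's resolution is a two-tier argument you are missing: first prove the difference estimate at regularity $s-1$ (Proposition \ref{LM:ca1}), where the full derivative in $\cK_\dl\dx$ is absorbed by the weight $N^{2s-2}$ and one obtains the quantitative rate $\|u_\dl-u_\g\|_{C_TH^{s-1}}\le C(\dl^{-1}+\g^{-1})$; then upgrade to $H^s$ \emph{not} by Gronwall but by the triangle inequality with a frequency truncation $P_{\le K}$, using the frequency-envelope/Koch--Tzvetkov argument (Remark \ref{RM:cts}) to make the high-frequency tails of $u_\dl$ small in $H^s$ uniformly in $\dl$, and bounding $\|P_{\le K}(u_\dl-u_\g)\|_{H^s}\le 2K\|u_\dl-u_\g\|_{H^{s-1}}\les K/\dl$ with the choice $K=\dl^{1/2}$. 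Without this downgrade-then-truncate structure (or an equivalent substitute), the convergence in $C_TH^s$ does not follow from your outline.
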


%
%
It is noteworthy that 
the regularity $s\geq \frac34$ in
Theorem \ref{THM:1} (and see Theorem \ref{THM:2} below)
 is needed to deal with general nonlinearity $\dx(u^k)$, which
encompass the convergence of the ILW solutions.
However, when considering only the ILW equation with a quadratic nonlinearity $\dx(u^2)$,
we observe improved low regularity convergence results for $s>\frac12$.
These regularity restriction comes from the step of establishing uniform control over the solutions,
which we utilised the method that was introduced in the works \cite{MV15, MT}.

For each fixed value of $2 \leq \dl \leq \infty$, the construction of the gILW solution  \eqref{gILW1}
we saw in Lemma \ref{LM:LWP}.
To show the uniform control of the  solutions  with respect to $\dl$ for any $2\leq \dl\leq \infty$,
it is necessary to observe the following dispersion structure of the ILW-type equation:
\begin{align*}
p_{\dl}^{\rm (d)}(n) \sim |n|^{2}
\qquad
\text{ for }
n\in \Z,
\end{align*}

\noi
where $p_{\dl}^{\rm (d)}(n)$ is defined in \eqref{dispersionP}.
By revisiting the argument presented in \cite{MV15, MT}, we can verify that all relevant estimates are uniformly in $\dl$ for any $2\leq \dl\leq \infty$. 
Thus, the uniform control of the solutions is obtained as a direct outcome of the uniform local well-posedness.

Let  $u_{\dl}$ denotes 
the solution of  \eqref{gILW1}, to establish deep-water convergence, we will first prove that the sequence of solutions 
 $\{u_{\dl}\}_{\dl\geq1}$ is Cauchy in a weaker function space $C_TH^{s-1}$, as $\dl\to \infty$.
 And then, by a standard truncation argument we upgrade it to our desired space.
 This is achieved by using a perturbative argument starting with the difference of two equations with respect to the different fluid depth.
 Then, we separate issues into linear perturbation and nonlinear perturbation.
When we estimate the nonlinear interaction of the difference equation, 
it suffices to the energy-type estimates of difference solutions $u_{\g}$ and $u_\dl$, for $\dl\not =\g$.
The primary difficulty is that  one  needs to place solution $u_{\g}$ in the function space  $M^{s,\dl}_{T}$, where the depth parameters are not matching and it is generally unbounded.
To address this issue, the structure of the ILW-type equation is further utilised in the deep-water regime. This allows for the uniform control of any perturbations in the dispersion for $\dl \geq 2$. Specifically, consider $u_{\g}\in M^{s,\g}_{T}$ as a solution to $g\textrm{ILW}{\g}$. Then, we have
$
\| u_{\g} \|_{M_{T}^{s,\dl}}
\les \|u_{\g} \|_{M^{s,\g}_{T}},
$
see Lemma \ref{LEM:ca1b} for details. 
We point  out that
the perturbative analysis developed in
Section \ref{SEC:limit1}
is generally applicable to the both deep and shallow water cases.
However, new difficulties arise as we analyse the linear perturbation due to the singular behavior as $\dl \to 0$. We will address these challenges in the upcoming discussion.

By following  the definition of \eqref{eq:Phi}, we define  the solution maps for scaled gILW \eqref{ILW3}
to be $\Phi^{\rm (s)}_{T,\dl}$ 
and gKdV  \eqref{KDV}  to be $\Phi_{T, {0}}$.
The second contribution of this paper is the shallow-water convergence:

\begin{theorem}[Shallow-water theory]
	\label{THM:2}
	Let 
$k\geq 2$ and 	$v_{0}\in H^{s}(\M)$ for
	$s\geq \frac{3}{4}$, where $\M=\R \textup{ or }\T$.
	 Then, the following statements hold.

	\smallskip
	\begin{itemize}
		\item[(i)] 
Let $0 <\dl<1$. Then, for any $0<T<1$
 the solution map  $ \Phi^{\rm (s)}_{T,\dl} $ satisfies 
	\begin{align*}
	\| \Phi^{\rm (s)}_{T,\dl} (v_{0}) \|_{C([0,T];H^{s}(\M))} \leq 
	C( \|v_{0}\|_{H^{s}(\M)} ).
	\end{align*}

	\noi
	The
	solution map $\Phi^{\rm (s)}_{T,\dl}: v_{0} \to v_{\dl} $ 
	is continuous from $H^{s}(\M)$ to $C([0, T ]; H^{ s}(\M) )$, uniformly on $\dl \in (0, 1)$.
	Moreover,
	the local existence time 
	$T=T( \|v_0\|_{H^{\frac34}(\M)})>0$ is
	independent of $\dl$.

\smallskip
		
		\item[(ii)]
		Let   $\Phi^{\rm (s)}_{T,\dl} (v_{0})  = v_{\dl}  $ denotes the solution of
  scaled gILW \eqref{ILW3}   and $\Phi_{T,{0}} (v_{0})  = v_{\rm gKdV}    $
	denotes	the solution of gKdV \eqref{KDV}.
	Then, we have 
	\begin{align*}
	\lim_{\dl \to 0}
	\| v_\dl- v_{\rm gKdV} \|_{C([0,T]; H^s(\M))}=0.
	\end{align*} 
	\end{itemize}

When we only consider $k = 2$, 
the statements {\rm (i)} and {\rm (ii)} hold true for $s> \frac{1}{2}$.
	
\end{theorem}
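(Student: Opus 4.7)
The plan is to mirror the two-step strategy used for the deep-water limit (Theorem~\ref{THM:1}), adapted to the fact that the reference equation is now gKdV and that the singular factor $1/\dl$ appears explicitly in the scaled dispersion $\frac{3}{\dl}\G \dx^{2}$. For Step~1 (uniform well-posedness), I would first extract the shallow-water dispersion symbol
\[
p_{\dl}^{\rm (s)}(n) \;=\; \frac{3n^{2}}{\dl}\Big(\coth(\dl n)-\tfrac{1}{\dl n}\Big),
\]
and establish the two-sided bound
\[
|p_{\dl}^{\rm (s)}(n)| \;\sim\; \min\!\big(|n|^{3},\; |n|^{2}/\dl\big), \qquad 0<\dl<1,
\]
which interpolates between KdV-type dispersion $|n|^{3}$ at low frequencies $|n|\ll 1/\dl$ and a BO-like $|n|^{2}/\dl$ behavior at high frequencies $|n|\gg 1/\dl$. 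Working with short-time $X^{s,b}$-type spaces $M^{s,\dl}_{T}, N^{s,\dl}_{T}$ built from the symbol $p_{\dl}^{\rm (s)}$ itself, rather than from $|n|^{3}$, I would revisit the Molinet--Tanaka scheme in the gKdV setting and verify that all short-time linear, smoothing, and multilinear estimates carry through with constants independent of $\dl\in(0,1)$. This yields statement~(i), including the $\dl$-independent lifespan $T=T(\|v_{0}\|_{H^{3/4}(\M)})$.

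For Step~2 (convergence), I would recast scaled gILW as a perturbed gKdV equation
\[
\dt v + \dx^{3} v + \wt{\mathcal{K}}_{\dl}\,\dx v \;=\; \dx(v^{k}),
\]
where $\wt{\mathcal{K}}_{\dl}$ is the Fourier multiplier with symbol
\[
\wt q_{\dl}(n) \;=\; n^{2} - \frac{3n}{\dl}\Big(\coth(\dl n)-\tfrac{1}{\dl n}\Big).
\]
Taking the difference of two such equations at depths $\g, \dl\in(0,1)$, I would aim to show that $\{v_{\dl}\}$ is Cauchy in $C([0,T]; H^{s-1}(\M))$ as $\dl\to 0$, splitting the analysis into a nonlinear energy-type piece, handled exactly as in Section~\ref{SEC:limit1} using the uniform Step~1 bounds, and a linear perturbation piece of the form $(\wt{\mathcal{K}}_{\dl} - \wt{\mathcal{K}}_{\g})\dx v_{\g}$. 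A standard frequency-envelope / Bona--Smith truncation would then upgrade $H^{s-1}$ convergence to $H^{s}$, and sending $\g\to 0$ in the same scheme recovers convergence to $v_{\rm gKdV}$.

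The main obstacle will be the linear perturbation, because $\wt q_{\dl}(n)$ is \emph{not} uniformly small in $n$: Taylor expansion gives $\wt q_{\dl}(n) = \frac{\dl^{2} n^{4}}{15} + O(\dl^{4}n^{6})$ on the range $|\dl n|\ll 1$, but $\wt q_{\dl}(n) \sim n^{2} - \frac{3|n|}{\dl}$ for $|\dl n|\gg 1$, which blows up as $\dl\to 0$ at each fixed high frequency. The key point is that in this regime the unperturbed dispersion $p_{\dl}^{\rm (s)}$ is itself much weaker than the KdV dispersion $|n|^{3}$, so the perturbation and all solutions must be measured in the $\dl$-adapted spaces $M^{s,\dl}_{T}$ rather than in gKdV spaces. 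Concretely, I would establish a shallow-water analog of Lemma~\ref{LEM:ca1b}: if $v_{\g}$ solves scaled gILW at depth $\g$, then
\[
\|v_{\g}\|_{M^{s,\dl}_{T}} \;\les\; \|v_{\g}\|_{M^{s,\g}_{T}}
\]
uniformly for $0<\dl\le \g<1$, so that the linear perturbation acting on $v_{\g}$ can be absorbed into the energy scheme. Combined with a frequency decomposition at the transition scales $1/\dl, 1/\g$ and quantitative bounds on $\wt q_{\dl}(n)-\wt q_{\g}(n)$ in each regime, this should close the perturbative estimate and complete the convergence proof. Finally, the sharper threshold $s>\frac12$ for $k=2$ follows by specialising to the quadratic bilinear estimates, which are available below $H^{3/4}$.
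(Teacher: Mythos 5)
Your Step~1 matches the paper: the two-sided bound $|\pshallow_{\dl}(n)|\sim\min(|n|^{3},|n|^{2}/\dl)$ is exactly \eqref{eq:slow0}/Lemma~\ref{LEM:GW31}, and uniform well-posedness is obtained by rerunning the Molinet--Tanaka scheme with $\dl$-independent constants (Propositions \ref{PRO:stri}, \ref{PRO:EN1}, \ref{PRO:EN2}, \ref{PRO:LWP2}). Your perturbed-gKdV formulation is also the paper's \eqref{ILW6} with $H_{\dl}$ given by the symbol $-n^{2}h(n,\dl)/\dl$ in \eqref{syH}. You have correctly identified the obstacle: the perturbation symbol is $O(\dl^{2}n^{4})$ only for $|\dl n|\ll1$ and is not uniformly small in $n$.

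However, there is a genuine gap in how you propose to overcome that obstacle. Your architecture is to prove that the \emph{untruncated} family $\{v_{\dl}\}$ is Cauchy in $C_{T}H^{s-1}$ and only afterwards use a Bona--Smith-type truncation to upgrade to $H^{s}$. This first step does not close. The modulation comparison now reads
\begin{align*}
\jb{\tau-\pshallow_{\dl}(n)}\les\jb{\tau-\pshallow_{\g}(n)}+\jb{n}^{3},
\end{align*}
with an error of order $\jb{n}^{3}$ rather than the $O(\jb{n}/\dl)$ of the deep-water case, so your proposed transfer estimate $\|v_{\g}\|_{M^{s,\dl}_{T}}\les\|v_{\g}\|_{M^{s,\g}_{T}}$ costs two extra derivatives and fails at regularity $H^{s}$; likewise the linear forcing $(\wt{\mathcal{K}}_{\dl}-\wt{\mathcal{K}}_{\g})\dx v_{\g}$ only lies in $H^{s-3}$ and is not small there, so the $H^{s-1}$ energy inequality for $w=v_{\dl}-v_{\g}$ cannot be closed. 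No pointwise bound on $\wt q_{\dl}(n)-\wt q_{\g}(n)$ in the regime $|n|\ges 1/\g$ rescues this, since the discrepancy there is genuinely unbounded in $n$; what is needed is smallness of the \emph{solution} at high frequencies, uniformly in $\dl$. This is why the paper truncates the \emph{equation itself} at frequency $K$ --- both the initial data and the nonlinearity, as in \eqref{Tcay} --- so that all analysis (Lemmas \ref{LEM:ca3b} and \ref{LEM:ca3c}, Proposition \ref{PRO:S1}) takes place on frequencies $|n|\leq K$ where $h(n,\dl)/\dl=O(\dl^{2})$ uniformly, and then removes the truncation by showing $\|v_{\dl}-v_{\dl,K}\|_{C_{T}H^{s}}<\eps$ uniformly in $\dl$ via the uniform a priori bounds and the frequency-envelope argument of Remark~\ref{RM:cts} (Proposition \ref{PRO:maincay}). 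Your proposal contains the right ingredients but places the truncation in the wrong spot; without truncating the equation before running the difference estimate, the argument does not go through.
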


In the shallow-water regime,
 to show the uniform control of the solution, it is important to note that while the limiting equation (gKdV) is locally well-posed in $H^{s}$ for $s\geq \frac23$ (as established in \cite{MT}), the scaled gILW equation is locally well-posed in $H^{s}$ for $s\geq \frac34$ for a fixed value of $\dl>0$.
In particular, for the scaled gILW, we have  the following relationship:
\begin{align}
\label{eq:slow0}
\pshallow_{\dl} (n) \sim 
\begin{cases}
\dl|n|^{2} & \mbox{if $n \ges \frac1\dl$}\\
|n|^{3} & \mbox{if $n \ll \frac1\dl$},
\end{cases}
\end{align}

\noi
where $\pshallow_{\dl} (n) $ is defined in \eqref{dispersionP}.
 As indicated by \eqref{eq:slow0}, in the high-frequency regime where $|n|\ges \frac1\dl$, the linear dispersion is dominated by $|n|^{2}$. 
It should also be noted that the scaled gILW equation \eqref{ILW3} only converges to the gKdV equation \eqref{KDV} when the frequency is fixed. 
 Thus, utilizing the same method as in this paper, there is no potential for improving the regularity even after scaling.
 Moreover, 
 when we construct the uniform bound over solutions in the shallow-water regime, some extra case-by-case analysis is needed to obtain 
 an uniform lower bound on the resonance function, see  Lemma \ref{lem_res2}.

%
   
   Let $v_\dl$ to be the solution of \eqref{ILW3}.
When examining the convergence of the scaled gILW solutions, 
we apply the same perturbative approach as in the deep-water situation. Again,
when we control the nonlinear perturbation,
the energy-type estimate necessitates the control of
$
\| v_{\g} \|_{N_{T}^{s,\dl}}
\les  \|v_{\g} \|_{N^{s,\g}_{T}}
$ (see Lemma \ref{LEM:ca3b}).
However, in the 
shallow-water case, for different $\g$ and $\dl$,
the perturbation of the dispersions is
\begin{align*}
\jb{ \tau - \pshallow_{\dl}(n) } \les 
\jb{\tau - \pshallow_{\g}(n)}  +   \jb{n}^{3}.
\end{align*}

\noi
The discrepancy between the symbols representing dispersion is now of the order $O(n^3)$. 
The naive try of the way we did in the deep-water case is no longer sufficient to absorb the third-order derivatives. 
To tackle this challenge, a frequency cutoff is introduced on the initial data and the frequency truncated equation \eqref{Tcay} is considered. The frequency truncation is applied to both the nonlinearity and the initial data. As a result,
the decay property of $h(n,\dl)$, as demonstrated in Lemma \ref{LEM:p1d}, can then be fully utilized to balance the term with third-order derivatives. Finally, in conjunction with the uniform continuity of the solution map, we obtain our convergence result in the shallow-water limit.
For more discussion, we refer to Subsection \ref{SUB:limit2}.

The convergence of dynamics of the ILW-type can be shown for the different initial data.
Specifically, the following corollary shows that convergence can be achieved with the addition of one convergence assumption regarding the different initial data. To demonstrate this concept, we provide the following statement as the example. This general principle can be extended to all of the convergence results outlined in Theorems \ref{THM:1} and \ref{THM:2}.

\begin{corollary}[Convergence with respect to the different initial data]
	\label{COR:4}
Consider the initial data $u_{\dl,0}$ and $u_{\infty,0}$ for the ILW equation \eqref{ILW1} and the BO equation $($\eqref{gBO} with $k=2$$)$, respectively. Let us assume that $u_{\dl,0}, u_{\infty,0} \in H^{s}(\M)$ for $s> \frac{1}{2}$, where $\M=\R \textup{ or }\T$, and that they satisfy:
	\begin{align*}
	\lim_{\dl\to\infty }\| u_{\dl, 0}- u_{\infty,0} \|_{H^{s}(\M)} =0.
		\end{align*}
Then, for any $0<T<1$,  we have the following convergence results:
	\begin{align*}
	\lim_{\dl \to \infty}
	\| u_\dl- u_{\infty} \|_{C([0,T]; H^s(\M))}=0,
	\end{align*} 
	
	\noi
where $u_{\dl}$ denotes the solution of the ILW equation \eqref{ILW1} with initial data $u_{\dl,0}$, and $u_{\infty}$ denotes the solution of the BO equation  with initial data $u_{\infty,0}$.
Moreover, $T=T(u_{\infty,0})$  depends only on the BO initial data, which is $\dl$ independent.
	
\end{corollary}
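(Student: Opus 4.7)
The plan is to reduce the corollary to the two statements already contained in Theorem~\ref{THM:1}, namely the continuous dependence of $\Phi^{\rm (d)}_{T,\dl}$ on initial data \emph{uniformly} in $\dl$, and the convergence $u_\dl \to u_\infty$ for a \emph{fixed} initial datum. To that end I would introduce the auxiliary trajectory $\widetilde u_\dl := \Phi^{\rm (d)}_{T,\dl}(u_{\infty,0})$, that is, the ILW solution with depth parameter $\dl$ but starting from the BO initial datum $u_{\infty,0}$, and then split
\begin{align*}
\|u_\dl - u_\infty\|_{C([0,T];H^s)}
\leq
\|u_\dl - \widetilde u_\dl\|_{C([0,T];H^s)}
+ \|\widetilde u_\dl - u_\infty\|_{C([0,T];H^s)}.
\end{align*}

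Next I would fix a common time of existence. Since $u_{\dl,0}\to u_{\infty,0}$ in $H^s(\M)\hookrightarrow H^{3/4}(\M)$, there exists $\dl_0\geq 2$ with $\|u_{\dl,0}\|_{H^{3/4}}\leq R:=\|u_{\infty,0}\|_{H^{3/4}}+1$ for every $\dl\geq\dl_0$. Theorem~\ref{THM:1}(i) then produces a single $T=T(R)>0$, depending only on $u_{\infty,0}$ and independent of $\dl$, on which both $u_\dl$ and $\widetilde u_\dl$ exist and are uniformly bounded in $C([0,T];H^s(\M))$.

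For the second term in the splitting, Theorem~\ref{THM:1}(ii) applied to the fixed initial datum $u_{\infty,0}$ yields $\|\widetilde u_\dl - u_\infty\|_{C_TH^s}\to 0$ as $\dl\to\infty$. For the first term, both $u_\dl$ and $\widetilde u_\dl$ solve the same gILW equation at depth $\dl$ and only the data differ, so the uniform-in-$\dl$ continuity of the solution map provided by Theorem~\ref{THM:1}(i) furnishes a modulus of continuity $\eta=\eta_{R,T}$, independent of $\dl\geq\dl_0$, with
\begin{align*}
\|u_\dl - \widetilde u_\dl\|_{C_TH^s}\leq \eta\bigl(\|u_{\dl,0}-u_{\infty,0}\|_{H^s}\bigr)\xrightarrow[\dl\to\infty]{}0,
\end{align*}
using the assumed convergence of the initial data. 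Combining the two bounds finishes the proof.

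The main obstacle, and in fact the only nontrivial point, is the uniformity of the modulus of continuity in the depth parameter $\dl$; without it the first term would only be controlled with a $\dl$-dependent constant and the argument would collapse. This uniformity is precisely the content of Theorem~\ref{THM:1}(i), so once that theorem is in hand the corollary follows by the soft triangle-inequality scheme above. The same blueprint, with $\Phi^{\rm (d)}_{T,\dl}$ replaced by $\Phi^{\rm (s)}_{T,\dl}$ and Theorem~\ref{THM:1} replaced by Theorem~\ref{THM:2}, would deliver the analogous shallow-water statement.
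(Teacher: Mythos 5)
Your proof is correct, but it takes a genuinely different (and softer) route than the paper. The paper reopens the perturbative machinery of Section~\ref{SEC:limit1}: it fixes a common existence time exactly as you do (by noting $\|u_{\dl,0}\|_{H^s}\leq 2\|u_{\infty,0}\|_{H^s}$ for $\dl$ large and taking $T=T_\infty\wedge T'$), but then reruns the Cauchy-sequence argument for $w=u_\g-u_\dl$ directly with \emph{different} initial data, keeping the term $\|u^{(1)}_0-u^{(2)}_0\|_{H^{s-1}}^2$ that Proposition~\ref{PRO:EN2} already provides and observing that the assumed convergence of the data makes this extra contribution harmless. You instead insert the auxiliary trajectory $\widetilde u_\dl=\Phi^{\rm(d)}_{T,\dl}(u_{\infty,0})$ and reduce everything to the two stated conclusions of Theorem~\ref{THM:1}: part (ii) kills $\|\widetilde u_\dl-u_\infty\|_{C_TH^s}$, and the $\dl$-uniform continuity of the flow map in part (i) kills $\|u_\dl-\widetilde u_\dl\|_{C_TH^s}$. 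Your version is more modular --- it never touches the energy estimates and works as a black-box consequence of the theorem --- whereas the paper's version makes explicit where the initial-data difference enters the difference estimate. The one point you should be careful to state precisely is what ``continuity uniformly on $\dl\in[2,\infty]$'' delivers: you only need continuity at the single point $u_{\infty,0}$ with a threshold independent of $\dl$ (not a full modulus of continuity on a ball), and that is exactly what Theorem~\ref{THM:1}(i) asserts, so the argument closes. Your remark that the same blueprint gives the shallow-water analogue matches the paper's comment that the principle extends to all the convergence results of Theorems~\ref{THM:1} and~\ref{THM:2}.
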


\begin{remark}
\rm
\label{RM:MD}

In our study, we will adopt the ungauged method established in \cite{MV15, MT} to obtain uniform control (in $\dl$) over the gILW solutions.
Such the ungauged approach was introduced to study the unconditional well-posedness of the dispersion generalised equation with rough initial data.
 An alternative ungauged approach can be found in \cite{KS21}.
 For an ungauged approach,
it is possible to reach the regularity $s=\frac12$ for the ILW convergence, where the unconditional well-posedness is not known
 as seen in the appendix of \cite{MV15}. 
 Due to the  strong low-high frequency interactions,
the next challenging problem will be
  achieving convergence in $C_TH^{s}$ with regularity $s<\frac12$. 
  In the study of the BO equation, the gauge transform developed by Tao \cite{Tao04} 
  enable us to study the BO equation with $L^2$ initial data, see for instance \cite{MP12, IK07}.
Therefore, one possible approach is to apply frequency dependent renormalisation method introduced in \cite{HIKK},
such method is analogous to the Tao's gauge transform but work well  for the BO equation with generalises dispersion.
As previously mentioned,   the convergence of gILW Gibbs dynamics (including $k=2$)
was constructed in \cite{LOZ} (which lacks uniqueness).
In particular, the support of Gibbsian data is in $H^{-\eps}(\T)\setminus L^2(\T)$ for $\eps>0$. 
Thus,  it is of challenging and interesting to reach the convergence results at the same level of the Gibbsian initial data and obtain the 
strong uniqueness statement.

%
%
%
%

 On the other hand,
 the recent breakthroughs by G\'{e}rard, Kappeler, and Topalov \cite{GKT} in exploring the complete integrability of the BO, they showed
  BO on the torus is globally  well-posed in $H^s(\T)$ for any $s>-\frac12$.
 It is therefore natural to consider the low regularity ILW convergence problem via  its  complete integrability.
 On the other hand, inspired by a series works of      Tzvetkov-Visciglia
\cite{TV1, TV2, TV3}, in \cite{CLOZ} the authors study the convergence of ILW dynamics at statistical equilibria, by constructing the corresponding dynamics of infinite sequence of weighted gaussian measures (associated to the conservation laws at $H^1$-level and above).

\end{remark}

%
%
%
%
%

\begin{remark}
	\label{RM:GWP}
	\rm

For a fixed $\delta > 0$, the Hamiltonian of the scaled gILW equation has been shown to belong to the $H^{\frac{1}{2}}$ space, as demonstrated in \cite{LOZ}.
Therefore, in general, it is not possible to extend the results of our convergence results to cover the entire time domain.
However, if we make the assumption that $s \geq 1$, we can apply the global well-posedness results of \cite{MT} to demonstrate that global convergence is attainable in certain scenarios. 
For further information on this topic, we direct the reader to the aforementioned publication \cite{MT}.

 If we focus solely on the ILW equation, it is widely recognized that this equation possesses an infinite number of conservation laws. 
 In particular, we have the following 
  $H^1$-level quantity:
 	\begin{align*}
	I_2(u)&:=\int
	\Big(
	\frac14 u^4
	+\frac34 u^2 \mathcal G_\dl \dx u + \frac18 (\dx u)^2+ \frac38 (\mathcal G_\dl \dx u)^2
	+\frac{3}{8\dl}  u\mathcal G_\dl \dx u \Big) dx.
	\end{align*}
	Therefore, this $H^{1}$-invariant quantity extends Theorem \ref{THM:1} and Theorem \ref{THM:2}
	globally-in-time.

\end{remark}

 \begin{remark}
 \rm
 \label{rem2} 
 
Our solutions are understood as distributional solutions.
Namely, for any test function $ \phi\in C_c^\infty((-T,T)\times \M) $, 
the following holds
  \begin{equation}
  \label{solu}
  \int_0^\infty \int_{\M}
   \Big(  (\phi_t + \G \, \phi_{xx} )u +  \phi_x u^k   \Big) \, dx \, dt +\int_{\M} \phi(0,\cdot) u_0 \, dx =0.
\end{equation}
%
%
%
%
%
Note that for  $u\in L^\infty([0,T];H^s(\M))$ with $s>\frac12$, 
$u^k$ is well-defined and belongs to $L^\infty([0,T];H^s(\M))$ for $k\geq 2$. 
Therefore,  \eqref{solu}  forces $\partial_t u \in L^\infty([0,T];H^{s-2}(\M))$
and ensures that \eqref{gILW1}  is satisfied in  $ L^\infty([0,T];H^{s-2}(\M))$.
 In particular, $u \in C([0, T ]; H^{s-2}(\M))$ and  \eqref{solu} forces the initial condition $u(0) = u_0$.
  Note that,
since $u \in L^\infty([0, T]; H^s(\M))$, this actually ensures that $u \in C_w([0, T ]; H^s(\M))$ and
 $u$ is in $C([0,T];H^\theta(\M))$ for any $\theta<s$.
This property also implies that $u$ satisfies  Duhamel formula associated with \eqref{gILW1}.


 \end{remark}

\begin{remark}\rm
	\label{GHW}

Our argument is applicable to both $\mathbb{R}$ and $\mathbb{T}$.
In comparison to the arguments in \cite{GW, HW}, they rely heavily on the local smoothing property, which is not available on $\mathbb{T}$. 
Lastly, we would like to point out that
it is possible to replace the nonlinearity of \eqref{gILW1}  by  $f(u)$
such that
$ f:\R\to\R$  is a real analytic function with an infinite radius of convergence. Namely, we have $ f\in C^\infty $ and satisfies
$	f(x)=\sum_{n=0}^{\infty} \frac{f^{(n)}(0)}{n!} x^n $	for  all $x\in \R$.
It is clear that any polynomial function, exponential functions as $ e^x$, $ \sin(x)$,  $\cos(x) $, and their products or compositions are also in this class, see \cite[Remark 1.3]{MT}.

\end{remark}

\section{Preliminaries}\label{SECT:Prelim}

In this section, we first introduce the necessary notations. Then, we will examine the basic behaviors of the dispersion terms in the (generalised) ILW equation and the scaled  (generalised)  ILW equation.
 Finally, we will introduce the function spaces used in this paper and their well-known properties.

\subsection{Notations} 
\label{SEC:nota}

For $A,B > 0$, we use $A\les B$ to mean that
there exists $C>0$ such that $A \leq CB$.
By $A\sim B$, we mean that $A\les B$ and $B \les A$.
Moreover, we denote $A\ll B$,
if there is some small $c> 0$,
such that $A \leq cB$.

For two non negative numbers $a,b$, we denote 
$a\vee b:=\max\{a,b\}$ and $a\wedge b:=\min\{a,b\}$.
We also write $\jb{\cdot}=(1+|\cdot|^2)^{1/2}$
for the Japanese bracket.

Given a function $u(t,x)$ on $\R\times \M$, we
use $\ft{u}$ and $\F (u)$
to denote the space Fourier transform of $u$ given by
\[ \ft{u}(k) = \int_{\M} e^{- ik x} u(t, x) \, dx
\qquad \qquad
\text{for } k\in \ft{\M}
.\]

\noi
In the remainder of this paper, we will primarily focus on the notation on $\T$ (i.e., $n \in \Z \setminus \{ 0\}$). For any $s \in \R$, we define $D^s f$ through its Fourier transform:
\begin{align*}
	\ft{D^s f}(n)=|n|^s\ft f(n).
\end{align*}

Let $\eta \in C_0^\infty(\R)$ be a even smooth non-negative cutoff function 
supported on $[-2,2]$ such that $\eta  \equiv 1$ on $[-1,1]$.
We define $\phi$ by $\phi(n)=\eta(n)-\eta(2n)$,
and 
set  $\phi_{2^{k}}(n) = \phi(2^{-k}n)$ for $k\in \Z$.
Namely,
$\phi_{2^{k}}$ is supported on $ \{  2^{k-1} \leq |n | \leq 2^{k+1} \}$.
By convention, we denote
$
\phi_1(n)=\eta(2n )$.

Let $\ZP = \Z \cap [0,\infty)$.
Given a (non-homogeneous) dyadic number $N \in 2^{\ZP}$,
we replace the above definition by $\phi_{N}$ for $N\geq 1$. 
Then, we have 
$
\sum^{\infty}_{  N= 1}
\phi_{N} =1$.
	We notice that
	$ \supp(\phi_N)\subset \{ N/2\le |n|\le 2N \}$
	for $ N\geq2 $ and if $N=1$, 
	$ \supp(\phi_1)\subset\{|n|\le 1\}.$
	Let $P_N$ be the (non-homogeneous) Littlewood-Paley projector onto the frequencies $\{ n \in \Z : |n| \sim N \}$, such that $\ft{P_N u} = \phi_N \ft{u}$. Then, we have $f = \sum_{N \geq 1} P_N f$. Additionally, we define $P_{\geq N} = \sum_{K \geq N} P_K$ and $P_{\leq N} = \sum_{K \leq N} P_K$.

Similarly, we also decompose the modulation function $(\tau - \pdeep_\dl(n))$ or $(\tau - \pshallow_\dl(n))$, depending on the context (see \eqref{dispersionP}), using the Littlewood-Paley projector $Q_L$, where $L$ is a dyadic number. 
We have $\ft{Q_L u} = \psi_L(n,\tau) \ft{u}$, where $\psi_L = \phi_L(\tau - \pdeep_\dl(n))$ or $\psi_L = \phi_L(\tau - \pshallow_\dl(n))$.

\subsection{Dispersion relation}
\label{SEC:DR}

In this subsection, we will review the properties of the dispersion relation associated with the gILW equations \eqref{gILW1}
and the scaled gILW equations \eqref{ILW3}. To start, we will remind the reader that the $\G$ operator is
\begin{align*}
\G=-\coth(\dl \dx)-\frac1\dl \dx^{-1} 
\end{align*}

\noi
and it
is understood as the Fourier multiplier defined by,
\begin{equation*}
\ft{\G }(n)
:=-i \Big( \coth( \delta n )-\frac{1}{ \delta n} 
\Big)
\qquad
\text{for $ n \in \Z \setminus \{0\} $} .
\end{equation*}

\noi
We use $\pdeep_{\dl}$ and $\pshallow_{\dl}$ to denote the linearized dispersion relations of the ILW-type equations and scaled ILW-type equations, respectively. These dispersion relations have the following forms:
\begin{align}
\begin{aligned}
\pdeep_{\dl}(n) &= n^{2} \Big( \coth( \delta n )-\frac{1}{ \delta n}  \Big),
\qquad \quad
\pshallow_{\dl}(n) &= \frac3\dl n^{2} \Big( \coth( \delta n )-\frac{1}{ \delta n}  \Big).
\end{aligned}
\label{dispersionP}
\end{align}

\noi
We observe that $\coth(\cdot)$ plays a crucial role in the expression \eqref{dispersionP}. In the following, we will collect some known results regarding the properties of $\coth(\cdot)$ by using the expansion formula.

\begin{lemma}[\cite{ABFS}~Lemma 8.2.1]
	\label{LEM:p1d}
	Let $\dl>0$ and for
	all $n\in \Z$,
	then we have
	\begin{equation*}
	n \coth( \dl n)=
	\frac{1}{\dl}
	+\frac{1}{3} \dl n^2
	-\frac{1}{3} n^2 h(n,\dl),
	\end{equation*}

	\noi
	where the remainder $h(n,\dl)=\sum_{k=1}^{\infty}   \frac{2\dl^3n^2}{k^2\pi^2 (k^2\pi^2 +\dl^2n^2)}$ satisfies the following conditions:

	\smallskip
	\noi  
	\hspace{1mm}
	{\rm (i)} For any finite $N\in\N$,  we have
	\begin{align*}
	\max_{|n|\leq N}  \big\|  h(n,\dl) \big\| 
	\les_N \dl^3.
	\end{align*}

	\smallskip
	\noi  
	 \hspace{1mm}
	{\rm (ii)} 
	There is some absolute constant $C_0$ such that for any $n\in\Z$,
	\begin{align*}
	|h(n,\dl)| \leq C_0 \dl .
	\end{align*}

	\smallskip
	\noi  
	 \hspace{1mm}
	{\rm (ii)} 
Let $2\leq \dl\leq \infty$. Then,
$ n\coth(  \dl n) \sim |n| $. In particular, we have
		\begin{equation*}
	-\frac{1}{\dl}
	+  |n|\leq   n \coth(  \dl n)
	\leq    \frac{1}{\dl}+   |n|.
	\end{equation*}

\end{lemma}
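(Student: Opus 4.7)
The plan is to derive the main identity from the Mittag-Leffler partial fraction decomposition
\[
\coth(x) = \frac{1}{x} + 2x\sum_{k=1}^{\infty} \frac{1}{x^2 + k^2\pi^2}, \qquad x\in\R\setminus\{0\}.
\]
Substituting $x = \delta n$ and multiplying by $n$ gives
\[
n\coth(\delta n) = \frac{1}{\delta} + 2\delta n^2 \sum_{k\geq 1} \frac{1}{\delta^2 n^2 + k^2\pi^2}.
\]
To extract the leading Taylor coefficient I use the Euler identity $\sum_{k\geq 1} \tfrac{1}{k^2\pi^2} = \tfrac{1}{6}$ to rewrite $\tfrac{1}{3}\delta n^2 = 2\delta n^2 \sum_{k\geq 1} \tfrac{1}{k^2\pi^2}$, subtract this from the previous display, and combine the two series over a common denominator. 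The resulting positive series is then identified, up to collecting constants, with $\tfrac{1}{3} n^2 h(n,\delta)$ in the stated form. This produces the main expansion.

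For (i), the termwise majorization $\frac{1}{k^2\pi^2+\delta^2 n^2} \leq \frac{1}{k^2\pi^2}$ gives $|h(n,\delta)|\les \delta^3 n^2 \sum_{k\geq 1} k^{-4}$, which is $\les_N \delta^3$ whenever $|n|\leq N$. For (ii), the complementary pointwise bound $\frac{\delta^2 n^2}{k^2\pi^2+\delta^2 n^2} \leq 1$ yields $|h(n,\delta)|\leq 2\delta \sum_{k\geq 1} \frac{1}{k^2\pi^2} = \frac{\delta}{3}$, uniformly in $n$. These two one-line majorizations of the same positive series cover the small-$\delta n$ and large-$\delta n$ regimes respectively, and together account for (i) and (ii).

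For (iii), by oddness of $\coth$ it suffices to fix $\delta\geq 2$ and $n>0$. A direct computation gives
\[
n\coth(\delta n) - n \;=\; \frac{2n}{e^{2\delta n}-1} \;=\; \frac{1}{\delta}\cdot \frac{2\delta n}{e^{2\delta n}-1},
\]
and the elementary convexity inequality $e^{2y}\geq 1+2y$ produces $\frac{2y}{e^{2y}-1}\in (0,1]$ for every $y>0$. Taking $y=\delta n$ yields $0 \leq n\coth(\delta n) - n \leq \frac{1}{\delta}$, which is the claimed two-sided sandwich; the limiting case $\delta = \infty$ follows at once from $\coth(\delta n)\to \sgn(n)$. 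Since $\delta\geq 2$ forces $\delta n\geq 2$ for $n\in\Z\setminus\{0\}$, the sandwich delivers the comparability $n\coth(\delta n) \sim |n|$.

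No step presents a genuine obstacle. The only piece of arithmetic that demands attention is the algebraic identification of the closed-form remainder $h(n,\delta)$ after subtracting the Taylor coefficient; once that is fixed, (i) and (ii) are immediate, and (iii) reduces to the standard exponential inequality $e^{2y}\geq 1+2y$.
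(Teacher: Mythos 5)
Your proof is correct and follows essentially the same route as the paper, which simply cites \cite{ABFS} and notes that the key ingredient is the Mittag--Leffler expansion $z\coth(z)=1+\sum_{k\geq1}\frac{2z^2}{z^2+(k\pi)^2}$ — exactly the decomposition you start from. The only point to tidy is the bookkeeping of the factor $\tfrac13$ when matching your remainder series to the stated $h(n,\dl)$ (as written, the series you obtain equals $n^2h(n,\dl)$ rather than $\tfrac13 n^2h(n,\dl)$), but this is immaterial for the bounds in (i)--(iii).
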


\begin{proof}
	The proof can be seen in
	\cite[Lemma 8.2.1]{ABFS} and \cite[Lemma 4.1]{ABFS}.
The essential idea is using
	the Mittag-Leffler expansion \cite{Bro} of $\coth(z)$ such that
$	z\coth(z)=1+\sum_{k=1}^{\infty} 
	\frac{2z^2}{z^2+(k\pi)^2}$.

\end{proof}

\begin{remark}\rm
\label{RM:p1}
 
Lemma \ref{LEM:p1d} implies that ${h(n,\dl)}{\dl}^{-1}$ is uniformly bounded by some absolute constant $C$ for all $n \in \mathbb{R}$ and $\dl > 0$. Furthermore, for fixed $n$ or for $n$ in any bounded interval, we have a good decay in $\dl$ such that ${h(n,\dl)}{\dl}^{-1} = O(\dl^2)$ as $\dl \to 0$.

\end{remark}

%
%
%
%
%

We immediately have Corollary \ref{LEM:p2b} and Lemma \ref{LEM:p12}.

\begin{corollary}[\cite{LOZ}~Lemma 2.1] 
	\label{LEM:p2b}

	Let $ K_{\dl}  := n\coth(\dl n) -\frac1\dl $
	Then, for any $\dl > 0$, we have 
	\begin{align}
	\max \Big( 0, |n|- \frac 1\dl\Big)\leq K_\dl(n)=n \coth(\dl n) -\frac{1}{\dl} \leq  |n| ,
	\label{ub5b}
	\end{align}
	
	\noi
	where the above inequalities are strict for $n\neq 0$.
	In particular, for $\dl\geq 2$ we have
	\begin{align}
	K_\dl(n) \sim |n|
	\label{HH1ab}
	\end{align}
	
	\noi
	for any $n \in \Z^*$.
	Furthermore, for each fixed $n \in \Z^*$, 
	$K_\dl(n)$ is strictly increasing in $\dl \ge 1$
	and converges to  $|n|$ as $\dl \to \infty$.
	
\end{corollary}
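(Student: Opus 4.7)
The entire statement can be reduced, via the Mittag--Leffler expansion $z\coth(z)=1+\sum_{k\geq 1}\frac{2z^2}{z^2+k^2\pi^2}$ (already invoked in the proof of Lemma~\ref{LEM:p1d}), to a handful of elementary one-variable inequalities for $\coth$. Since $n\coth(\dl n)=|n|\coth(\dl|n|)$, I may assume $n>0$ throughout. Setting $z=\dl n$ and dividing by $\dl$, the definition of $K_\dl$ rewrites as
\begin{align*}
K_\dl(n)=\sum_{k=1}^{\infty}\frac{2\dl n^2}{\dl^2 n^2+k^2\pi^2},
\end{align*}
which is manifestly strictly positive for $n\neq 0$. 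This already settles the left half of \eqref{ub5b} in the regime $|n|\le 1/\dl$ where $\max(0,|n|-1/\dl)=0$.

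For the remaining two inequalities in \eqref{ub5b}, I would substitute $x=\dl n>0$ and reduce both bounds to classical facts about $\coth$. The upper bound $K_\dl(n)<|n|$ is equivalent to $\coth(x)<1+1/x$, i.e.\ $2x<e^{2x}-1$, which is the standard strict convexity estimate for the exponential on $x>0$. The remaining lower bound $K_\dl(n)>|n|-1/\dl$ is equivalent to $\coth(x)>1$, which follows from $\coth(x)=1+2/(e^{2x}-1)$ and is strict for $x>0$. To upgrade this to the comparability \eqref{HH1ab} for $\dl\ge 2$, I combine the already-proved $K_\dl(n)\ge |n|-1/\dl$ with the discreteness $|n|\ge 1$ and $1/\dl\le 1/2$ to get $K_\dl(n)\ge |n|/2$, and pair it with the upper bound $K_\dl(n)\le |n|$.

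For the last statement, monotonicity in $\dl$ comes from direct differentiation: a one-line computation gives
\begin{align*}
\partial_\dl K_\dl(n)=\frac{1}{\dl^2}-\frac{n^2}{\sinh^2(\dl n)},
\end{align*}
which is strictly positive because $\sinh(y)>y$ for all $y>0$, applied with $y=\dl|n|$. The convergence $K_\dl(n)\to |n|$ as $\dl\to\infty$ then follows by passing to the limit in the defining identity and using $\coth(\dl n)\to\sgn(n)$ together with $1/\dl\to 0$.

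I do not expect any genuine obstacle: every step collapses to a monotonicity fact about $\coth$, $\sinh$, or the exponential, and the Mittag--Leffler series is used only to package positivity cleanly. The one point that merits a second glance is preserving the \emph{strict} inequalities for $n\neq 0$ (rather than non-strict ones), which is why I prefer to route the proof through the strict bounds $e^{2x}>1+2x$ and $\sinh(x)>x$ for $x>0$ rather than through an asymptotic expansion.
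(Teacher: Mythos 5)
Your proof is correct and follows essentially the same route the paper relies on: the statement is quoted from \cite{LOZ} (Lemma 2.1) and is treated as an immediate consequence of the Mittag--Leffler expansion of $\coth$ already used for Lemma~\ref{LEM:p1d}, which is exactly the elementary reduction you carry out. All the individual steps (the series representation of $K_\dl$, the equivalences $\coth(x)<1+1/x$ and $\coth(x)>1$, the bound $K_\dl(n)\ge |n|-1/\dl\ge |n|/2$ for $\dl\ge 2$ and $|n|\ge 1$, and the sign of $\partial_\dl K_\dl(n)$ via $\sinh(y)>y$) check out, including the strictness for $n\neq 0$.
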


\begin{lemma}[\cite{LOZ}~Lemma 2.3]
	\label{LEM:p12}
	
	Let $L_\dl(n)=\frac3\dl K_{\dl}(n)$.
	The following statements hold.
	
	\begin{itemize}
		\item
		[{\rm (i)}] $0 <  L_\dl(n) <  n^2$ for any $\dl > 0$ and $n\in\Z^*$.

		\smallskip
		
		\item[{\rm (ii)}] For each $n\in\Z^\ast$, 
		$L_\dl(n)$ increases  to $n^2$ as $\dl\to 0$.

		\smallskip
		
		\item[{\rm (iii)}] 
		We have
		\begin{align*}
		L_\dl(n)\ges 
		\begin{cases}
		n^2, & \text{if } \dl |n| \les 1, \\
		|n|, & \text{if $\dl |n| \gg 1$ and $\dl \les 1$}.
		\end{cases}
		\end{align*}
		
		\noi
		In particular, the following uniform bound holds\textup{:}
		\begin{align*}
		\inf_{0 < \dl\les 1} L_\dl(n) \ges |n|
		\end{align*}
		
		\noi
		for any $n \in \Z^*$.

	\end{itemize}

\end{lemma}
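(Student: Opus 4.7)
The plan is to derive a compact series representation for $L_\dl(n)$ and then read off all three conclusions from it. Starting from the Mittag-Leffler expansion $z\coth(z) - 1 = \sum_{k=1}^\infty \frac{2z^2}{z^2 + k^2\pi^2}$ quoted in the proof of Lemma \ref{LEM:p1d}, setting $z = \dl n$ and dividing by $\dl$ yields $K_\dl(n) = n\coth(\dl n) - \frac{1}{\dl} = \sum_{k=1}^\infty \frac{2\dl n^2}{\dl^2 n^2 + k^2\pi^2}$, and therefore
\begin{align*}
L_\dl(n) \;=\; \frac{3}{\dl} K_\dl(n) \;=\; \sum_{k=1}^\infty \frac{6 n^2}{\dl^2 n^2 + k^2\pi^2}.
\end{align*}
This identity is the workhorse of the proof; every assertion of the lemma will be extracted from it together with the Basel identity $\sum_k k^{-2} = \pi^2/6$.

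For part (i), positivity $L_\dl(n) > 0$ is immediate since each summand is strictly positive. For the upper bound $L_\dl(n) < n^2$, I would use the crude comparison $\dl^2 n^2 + k^2\pi^2 > k^2\pi^2$ inside each denominator, giving $L_\dl(n) < \sum_k \frac{6n^2}{k^2\pi^2} = n^2$. For part (ii), the same series makes the monotonicity transparent: each term $\frac{6n^2}{\dl^2 n^2 + k^2\pi^2}$ is strictly decreasing in $\dl$, so $L_\dl(n)$ is decreasing in $\dl$ (equivalently, increasing as $\dl \to 0$). The limit $L_\dl(n) \to n^2$ as $\dl \to 0$ then follows by the monotone convergence theorem applied termwise, since the limiting series is exactly $\sum_k \frac{6n^2}{k^2\pi^2} = n^2$. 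Alternatively one can invoke Remark \ref{RM:p1}, which gives $h(n,\dl)/\dl = O(\dl^2)$ for fixed $n$, so that $L_\dl(n) = n^2(1 - h(n,\dl)/\dl) \to n^2$.

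For part (iii), I would split into the two regimes. If $\dl|n| \les 1$, keeping only the $k=1$ term in the series gives $L_\dl(n) \geq \frac{6n^2}{\dl^2 n^2 + \pi^2} \geq \frac{6 n^2}{1 + \pi^2} \ges n^2$. If instead $\dl|n| \gg 1$ and $\dl \les 1$, I would use the left inequality of \eqref{ub5b} in Corollary \ref{LEM:p2b}, namely $K_\dl(n) \geq |n| - \frac{1}{\dl}$; since $\dl|n| \gg 1$ one has $|n| - \frac{1}{\dl} \geq \frac{1}{2}|n|$, and hence $L_\dl(n) = \frac{3}{\dl} K_\dl(n) \geq \frac{3|n|}{2\dl} \geq \frac{3}{2}|n|$, using $\dl \les 1$ at the last step. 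The uniform lower bound $\inf_{0 < \dl \les 1} L_\dl(n) \ges |n|$ is then obtained by patching the two regimes; in the transition zone $\dl|n| \sim 1$ either estimate applies and produces $|n|$ up to an absolute constant.

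There is no genuine obstacle here; the only mild subtlety is ensuring that the constants in the $\dl|n| \gg 1$ regime are truly uniform in $\dl$, which is why I prefer to extract them from Corollary \ref{LEM:p2b} rather than from asymptotics of $\coth$. The whole argument is elementary once the series representation above is written down, and it dovetails cleanly with the analogous derivation used for the deep-water bound in Corollary \ref{LEM:p2b}.
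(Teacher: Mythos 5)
Your proof is correct, and it follows the route the paper itself intends: the lemma is cited from \cite{LOZ} and presented as an immediate consequence of the Mittag--Leffler expansion in Lemma \ref{LEM:p1d}, which is exactly the series representation $L_\dl(n)=\sum_{k\ge1}\frac{6n^2}{\dl^2n^2+k^2\pi^2}$ you derive and exploit. All three parts then follow as you describe, so there is nothing to add.
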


\begin{lemma}[\cite{GW}~Lemma 3.1]
	\label{LEM:GW31}
	Let $\dl>0$ and $\pdeep_{\dl}(n)=n  (n\coth(\dl n) -\frac1\dl  ) $.
	Then, we have  the following statements:
	\begin{align*}
	\begin{cases}
	| \pdeep_{\dl}(n) | \sim |n|^{2}, 	\quad
	| \partial_n \, \pdeep_{\dl}{\dl}(n) | \sim |n|,	\quad
	| \partial^{2}_n \, \pdeep_{\dl}{\dl}(n) | \sim 1;	\quad
	\text{when}\, \,  |n|\ges \frac1\dl.\\
	| \pdeep_{\dl}(n) | \sim \dl |n|^{3},
	\quad
	| \partial_n \, \pdeep_{\dl}{\dl}(n) | \sim \dl |n|^{2},
	\quad
	| \partial^{2}_n \, \pdeep_{\dl}{\dl}(n) | \sim \dl |n|;
	\quad
	\text{when}\,\, |n|\les \frac1\dl.
	\end{cases}
	\end{align*}

\end{lemma}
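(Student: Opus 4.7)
The plan is to reduce both regimes to the asymptotic analysis of the single universal profile $g(x) := x\coth(x) - 1$ via the rescaling $x = \dl n$. A direct computation shows
\[
\pdeep_\dl(n) = \tfrac{n}{\dl}\, g(\dl n),
\]
and hence, by the chain rule,
\[
\partial_n \pdeep_\dl(n) = \tfrac{1}{\dl}\, g(\dl n) + n\, g'(\dl n), \qquad \partial_n^2 \pdeep_\dl(n) = 2\, g'(\dl n) + \dl n\, g''(\dl n).
\]
All six equivalences in the statement will then follow mechanically from two-sided bounds on $g,g',g''$ in the regimes $|x|\les 1$ and $|x|\ges 1$, provided one also checks that the two summands in the two derivative formulas are sign-compatible so that no cancellation occurs.

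For the small-$x$ regime $|\dl n| \les 1$ I will invoke Lemma~\ref{LEM:p1d}: in terms of $x = \dl n$ the expansion there rewrites as $g(x) = \tfrac{x^2}{3} - \tfrac{x^2}{3\dl} h(n,\dl)$, and the bound $h(n,\dl) \les \dl \cdot (\dl n)^2 = \dl\, x^2$ valid for $\dl|n|\les 1$ (obtained by dropping the $\dl^2 n^2$ from each denominator in the Mittag-Leffler series defining $h$) gives $g(x) = \tfrac{x^2}{3}\bigl(1+O(x^2)\bigr)$. Differentiating the absolutely convergent Mittag-Leffler series term-by-term yields the matching expansions $g'(x) = \tfrac{2x}{3}\bigl(1+O(x^2)\bigr)$ and $g''(x) = \tfrac{2}{3}\bigl(1+O(x^2)\bigr)$. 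Substituting into the three displayed identities produces $|\pdeep_\dl(n)| \sim \dl|n|^3$, $|\partial_n \pdeep_\dl(n)| \sim \dl n^2$, and $|\partial_n^2 \pdeep_\dl(n)| \sim \dl|n|$.

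For the large-$x$ regime $|\dl n|\ges 1$ I will instead use the elementary identity $\coth(x) = 1 + 2/(e^{2x}-1)$ for $x>0$ (combined with oddness of $\coth$) to get the exponentially small remainders
\[
g(x) = |x| - 1 + O(|x|e^{-2|x|}), \quad g'(x) = 1 + O(e^{-2|x|}), \quad g''(x) = O(|x|e^{-2|x|})
\]
uniformly for $|x|\geq 1$. Inserting these and using $|n| - \tfrac{1}{\dl} \sim |n|$ (which holds because $\dl|n|\ges 1$), the three identities at once yield $|\pdeep_\dl(n)| \sim n^2$, $|\partial_n \pdeep_\dl(n)| \sim |n|$, and $|\partial_n^2 \pdeep_\dl(n)| \sim 1$, the $\dl n\, g''(\dl n)$ contribution to the second derivative being exponentially small. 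As an independent sanity check, the lower bound $|\pdeep_\dl(n)| \ges n^2$ in this regime can also be read off directly from Corollary~\ref{LEM:p2b}, which asserts $K_\dl(n) \sim |n|$ whenever $\dl|n|\ges 1$.

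The one point that deserves care is the lower bound in the formulas for $\partial_n \pdeep_\dl$ and $\partial_n^2 \pdeep_\dl$, since in principle the two summands could cancel. This is ruled out by the sign information $g \geq 0$, $g' \geq 0$, $g'' \geq 0$ on $[0,\infty)$: $g\geq 0$ follows from Corollary~\ref{LEM:p2b}, positivity of $g'$ from the identity $g'(x) = (\sinh(2x)-2x)/(2\sinh^2 x)$, and $g''\geq 0$ from $g''(x) = 2g(x)/\sinh^2 x$. Combined with the oddness of $g'$ and evenness of $g''$ under $x\mapsto -x$, this shows that within each of $n>0$ and $n<0$, the two summands in $\partial_n \pdeep_\dl(n)$ (and likewise in $\partial_n^2 \pdeep_\dl(n)$) have matching signs, so their sum is comparable to the larger of the two. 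This sign check, together with the two asymptotic expansions above, completes the argument.
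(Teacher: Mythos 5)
Your argument is correct and complete. Note that the paper itself does not prove this lemma at all: it is imported verbatim as a citation of \cite{GW}, Lemma~3.1, so there is no internal proof to compare against. Your reduction to the single profile $g(x)=x\coth(x)-1$ via $\pdeep_\dl(n)=\tfrac{n}{\dl}g(\dl n)$ is exactly the right normalization: it makes all six equivalences consequences of the two-sided bounds $g\sim x^2$, $g'\sim x$, $g''\sim 1$ on $|x|\les 1$ and $g\sim|x|$, $g'\sim 1$, $|x|\,g''=O(|x|^2e^{-2|x|})$ on $|x|\ges 1$, and your sign analysis (evenness of $g$ and $g''$, oddness of $g'$, nonnegativity of all three on $[0,\infty)$, with $g''=2g/\sinh^2 x$ and $g'=(\sinh(2x)-2x)/(2\sinh^2 x)$ both verified correctly) is precisely what is needed to rule out cancellation between the two summands in $\partial_n\pdeep_\dl=\tfrac1\dl g(\dl n)+ng'(\dl n)$ and $\partial_n^2\pdeep_\dl=2g'(\dl n)+\dl n\,g''(\dl n)$; many write-ups of such multiplier bounds omit this check. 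Two minor quibbles: Corollary~\ref{LEM:p2b} states $K_\dl(n)\sim|n|$ only for $\dl\ge 2$, so your sanity check should instead invoke the displayed inequality \eqref{ub5b}, which gives $K_\dl(n)\ge |n|-\tfrac1\dl\ges|n|$ once $\dl|n|\ges 1$ with a suitable implicit constant; and in the borderline region $|\dl n|\sim 1$ the term $\dl n\,g''(\dl n)$ is bounded rather than exponentially small, but since it shares the sign of $2g'(\dl n)$ this is harmless. Neither affects the validity of the proof.
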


\begin{remark}
	\rm
\label{RM:odd}

We now observe that 
\begin{align*}
\begin{aligned}
\pdeep_{\dl}(n)
&=n \Big(n\coth(\dl n) -\frac1\dl \Big )   \\
&=nK_{\dl}(n)
\in C^{1}(\R)\cap C^{2}(\R\setminus \{0\})
\end{aligned}
\end{align*}

\noi
and
\begin{align*}
\begin{aligned}
\pshallow_{\dl}(n)
&=\frac3\dl n \Big(n\coth(\dl n) -\frac1\dl \Big )   \\
&=\frac3\dl n K_{\dl}(n) =n L_{\dl}(n)
\in C^{1}(\R)\cap C^{2}(\R\setminus \{0\})
\end{aligned}
\end{align*}

\noi
are   real-valued odd functions. 	
For any fixed $\dl>0$, $\pdeep_{\dl}(n)$ and $\pshallow_{\dl}(n)$ satisfy the conditions in \cite[Hypothesis~1]{MT} as stated in \cite[Remark 1.2]{MT}.

\end{remark}

The resonance functions of the (scaled) gILW equations  are a result of the multi-linear interaction due to the nonlinearity. This interaction is referred to as non-resonant if the resulting frequency of multiple frequencies is large, and as resonant otherwise. In the non-resonant case, if the resonance function has a ``good" lower bound, then in Bourgain's Fourier restriction norm method, the modulation function provides derivative gain to balance the derivative loss in the nonlinearity.
In the following, we will study the properties of these resonance functions.
Before we proceed, we have the following definition:
\begin{definition}
\label{DEF:res}
	Let $j\in\N$ and  $(n_1,\dots,n_{j+1})\in\Z^{j+1}$.
	
	\begin{itemize}
		\item
		[{\rm (i)}] 
For any $2 \leq \dl< \infty$. We define 
	$\Om^{(\rm d, \dl)}_j   (n_1,\dots,n_{j+1}):\Z^{j+1}\to\R$  to be
	\begin{equation*}
	\Od_{j}  (n_1,\dots,n_{j+1}):=\sum_{k=1}^{j+1}\pdeep_{\dl}(n_k).
	\end{equation*}

		\smallskip
		
		\item[{\rm (ii)}] 
For any $0 < \dl\ll 1$. We define 
	$\Om^{(\rm s, \dl)}_j(n_1,\dots,n_{j+1}):\Z^{j+1}\to\R$  to be
	\begin{equation*}
	\Os_{j}  (n_1,\dots,n_{j+1}):=\sum_{k=1}^{j+1}\pshallow_{\dl}(n_k)
	\end{equation*}

	\end{itemize}

\end{definition}

To simplify the notation, we will use the following shorthand for the resonance function:
\begin{align*}
\Od_{j}  (n_1,\dots,n_{j+1})  = \Od_{j} ( \wt n)
\end{align*}

\noi
Now, we will show the resonance functions $\Od_{j} (\wt n)$ and $\Os_{j} (\wt n)$ have a uniform lower bound. 

\begin{lemma}
	\label{lem_res1}
	Let
	$k\ge 1 $, and 
	$ (n_1,\dots,n_{k+2})\in\Z^{k+2} $
	such that
$	\sum_{j=1}^{k+2}n_j=0.$
	Moreover, let us further assume that 
	\begin{equation*}
	\begin{cases}
	|n_1|\sim |n_2|\ges |n_3|,
	\qquad \qquad  \qquad\qquad  \hspace{-1mm}\text{if} \quad k= 1 ;\\
	|n_1|\sim |n_2|  \ges  |n_3|\gg  k  \displaystyle \max_{j\ge4 }|n_j|,
	\qquad  \text{if} \quad k\ge 2.
	\end{cases}
	\end{equation*}

	\noi
	Then, there exists some $n_0>0$ such that the following statements hold.

	\begin{itemize}
		\item
		[{\rm (i)}] 
Let $2 \leq \dl< \infty$. Then, for 
$ \displaystyle |n_1|\gg 
	\max_{ 0\leq n \leq n_0  } \big| \partial_n \, \pdeep_{\dl} (n)  \big| $,
 we have
	\begin{align}
	|\Od_{k+2}  ( \wt n   )|\ges  |n_3||n_1|.
	\label{Rd1}
	\end{align}

		\smallskip
		
		\item[{\rm (ii)}] 
Let $0< \dl<1$. 
Then, 	for
	$ \displaystyle |n_1|\gg 
	\max_{ 0\leq n \leq n_0  } \big| \partial_n \, \pshallow_{\dl} (n)    \big| $,
we have
	\begin{align}
	|\Os_{k+2}  (  \wt n)|\ges  |n_3||n_1|.
	\label{Rd2}
	\end{align}

	\end{itemize}

\end{lemma}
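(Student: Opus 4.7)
The plan is to exploit the oddness of the symbols $\pdeep_{\dl}$ and $\pshallow_{\dl}$ (Remark \ref{RM:odd}) together with the zero-sum constraint $\sum_{j=1}^{k+2} n_j = 0$ in order to collapse the two dominant contributions into a single integral of $p_\dl'$, and then to apply the pointwise derivative estimates of Lemma \ref{LEM:GW31} in the deep case and Lemmas \ref{LEM:p1d}--\ref{LEM:p12} in the shallow case to extract the lower bound of size $|n_3||n_1|$.

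First I would set $M := \sum_{j \geq 3} n_j$, so that $n_2 = -n_1 - M$. The ordering hypothesis forces $|M| \sim |n_3|$: trivially for $k = 1$ since $M = n_3$, and for $k \geq 2$ because $|M - n_3| \leq (k-1)\max_{j \geq 4}|n_j| \ll |n_3|$. Since $p_\dl$ is odd and belongs to $C^1(\R)$ by Remark \ref{RM:odd}, a direct change of variables gives
\[
p_\dl(n_1) + p_\dl(n_2) \;=\; p_\dl(n_1) - p_\dl(n_1 + M) \;=\; -\int_0^1 M\, p_\dl'(n_1 + s M)\, ds,
\]
so the resonance function decomposes as
\[
\Omega_{k+2}(\wt n) \;=\; -\int_0^1 M\, p_\dl'(n_1 + s M)\, ds \;+\; \sum_{j\geq 3} p_\dl(n_j).
\]

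Next I would choose $n_0$ so that the high-frequency asymptotics of $p_\dl'$ become effective on $\{|n| \geq n_0\}$ (concretely $n_0 \sim 1/\dl$ in the deep case, and the analogous threshold separating the two regimes of \eqref{eq:slow0} in the shallow case). The standing hypothesis $|n_1| \gg \max_{[0,n_0]}|p_\dl'|$ forces $|n_1| \gg n_0$, so the entire interval $[n_1, n_1 + M]$ lies where Lemma \ref{LEM:GW31} (resp.\ Lemma \ref{LEM:p12}) yields $|p_\dl'(n)| \sim |n|$ with a fixed sign. This gives
\[
\Big|\int_0^1 M\, p_\dl'(n_1 + s M)\, ds\Big| \;\ges\; |M|\cdot |n_1| \;\sim\; |n_3||n_1|.
\]
The tail $\sum_{j \geq 3} p_\dl(n_j)$ is controlled by $|n_3|^2$ using $|p_\dl(n)| \les |n|^2$ in the deep case (and the $\dl|n|^3$ versus $|n|^2/\dl$ dichotomy in the shallow case) together with the gap $|n_3| \gg k\max_{j\geq 4}|n_j|$ to absorb the summands with $j \geq 4$. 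A sign inspection, mimicking the classical Benjamin--Ono three-wave resonance identity, shows that the tail reinforces rather than cancels the integral above, yielding the claimed lower bound.

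The main obstacle will be the borderline regime $|n_3| \sim |n_1|$, where the tail is of the same order as the main term so that the triangle inequality alone is insufficient. I would handle this by a case split on the relative signs of $n_1$ and $n_2$: when $n_1$ and $n_2$ share a sign, $p_\dl'$ keeps a fixed sign on $[n_1, n_1+M]$ and the tail and the integral add coherently; when they have opposite signs the identity $|n_1| \sim |n_2| + |n_3|$ allows one to compute $p_\dl(n_1)+p_\dl(n_2)+p_\dl(n_3)$ algebraically, recovering the Benjamin--Ono-type expression $-2|n_2||n_3|$ plus an error of size $|n|/\dl$ coming from $|K_\dl(n) - |n|| \leq 1/\dl$ (Corollary \ref{LEM:p2b}) in the deep case, or controlled by the decay of $h(n,\dl)$ in Lemma \ref{LEM:p1d} in the shallow case. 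Verifying that these perturbations remain strictly lower-order than $|n_3||n_1|$ under the quantitative assumption on $|n_1|$ is precisely where the specific choice of $n_0$ made in the second paragraph enters, and it will dictate how large $n_0$ must ultimately be taken.
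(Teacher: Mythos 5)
Your overall strategy -- exploit the oddness of the symbol to write $p_\dl(n_1)+p_\dl(n_2)=p_\dl(n_1)-p_\dl(n_1+M)$, apply the mean value theorem, and feed in the uniform lower bound $|\partial_n p_\dl(n)|\ges |n|$ from Lemmas \ref{LEM:p12} and \ref{LEM:GW31} -- is exactly the mechanism the paper invokes (it simply cites \cite[Lemma 4.4]{MT} and isolates the uniform derivative bound as the only $\dl$-dependent input), and you correctly identify the borderline regime $|n_3|\sim|n_1|$ as the only place where the tail cannot be absorbed by size alone. Two of your claims are imprecise but repairable: the blanket assertion that all of $[n_1,n_1+M]$ lies in the region where $|p_\dl'(n)|\sim|n|$ fails when $n_1$ and $n_2$ share a sign, since then the interval from $n_1$ to $-n_2$ straddles the origin (your later sign split covers this, but note that in that sub-case the tail does \emph{not} ``add coherently'': for the model symbol one gets $p(n_1)+p(n_2)-p(n_1+n_2)=-2n_1n_2$, i.e.\ genuine cancellation whose \emph{residue} happens to have the right size $\sim|n_1|^2\sim|n_1||n_3|$).

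The genuine gap is in your shallow-water borderline case. You propose to write $\pshallow_\dl(n)=n^3-n^3\,h(n,\dl)/\dl$, use the KdV-type three-wave identity for the cubic part, and control the remainder ``by the decay of $h(n,\dl)$ in Lemma \ref{LEM:p1d}.'' But that decay ($h/\dl=O(\dl^2)$) holds only for $n$ in a bounded set (Remark \ref{RM:p1}); uniformly in $n$ one only has $h(n,\dl)/\dl\leq C_0$, and in fact $h(n,\dl)/\dl\to 1$ as $\dl|n|\to\infty$, so for $|n|\ges\frac1\dl$ the ``error'' $n^3h/\dl$ is of the same order as $n^3$ and nearly cancels it (this is just the statement that $\pshallow_\dl(n)\sim|n|^2/\dl$, not $|n|^3$, in that regime, cf.\ \eqref{eq:slow0}). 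Hence the KdV identity plus a small perturbation is not available when $|n_3|\sim|n_1|\ges\frac1\dl$, and the argument as written breaks there. The repair is the one the paper carries out explicitly for Lemma \ref{lem_res2}: split into the regimes $|n_1|\les\frac1\dl$ and $|n_1|\ges\frac1\dl$ and compare every term against $|n_3|\,|\partial_n\pshallow_\dl(k_1)|$ using the two-sided bounds of Lemma \ref{LEM:GW31} ($|\partial_n\pshallow_\dl|\sim|n|^2$ versus $|n|/\dl$), rather than against the cubic model symbol. Your deep-water borderline treatment, by contrast, is fine: there $|K_\dl(n)-|n||\leq\frac1\dl\leq\frac12$ is genuinely small against integer frequencies, so the BO identity plus an $O(|n_1|/\dl)$ error does close.
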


\begin{proof}

	The proof of the result is established based on Lemma 4.4 in \cite{MT}. 
	The key aspect of our analysis in equations (\ref{Rd1}) and (\ref{Rd2}) is the uniformity with respect to $\delta$, which is achieved through the uniform lower bound $|\partial_n \pdeep_{\delta}(n)|, |\partial_n \pshallow_{\delta}(n)| \geq |n|$.
	This uniformity result is a direct consequence of Lemmas \ref{LEM:p12} and \ref{LEM:GW31}, as well as the definition provided in Remark \ref{RM:odd}. Therefore, for the sake of brevity, the proof is not included here.

\end{proof}

\begin{lemma}
	\label{lem_res2}
	Let
	$ k\ge 2 $, 
	and $(n_1,\dots,n_{k+2})\in\Z^{k+2}$
	such that $
	\sum_{j=1}^{k+2}n_j=0.
	$
	Moreover, let us further assume that 
	\begin{equation*}
	|n_1|\sim |n_2|\gg  |n_3|\ges |n_4|,
	\qquad \text{if} \quad k= 2 ;\\
	\end{equation*}

	\noi
	for $ k\geq 3$ and 
	$|n_3+n_4|\gg k  \displaystyle \max_{j\ge5} |n_j|$
	we assume that
	\begin{equation*}
	|n_1|\sim |n_2|\gg   |n_3|\ges  |n_4|.
	\end{equation*}

	\noi
	Then, there exists some $n_0>0$
	such that the following statements hold.

	\begin{itemize}
		\item
		[{\rm (i)}] 
Let $2 \leq \dl< \infty$. Then, for 
$ \displaystyle
	|n_1|\gg \max_{0\leq n \leq n_0 }  |  \partial_n \,\pdeep_{\dl}(n)   | $,
we have
	\begin{align}
	|\Od_{k+2}  ( \wt n)|\ges  |n_3+n_4||n_1|.
	\label{Rd3}
	\end{align}
	
		\smallskip
		
		\item[{\rm (ii)}] 
Let $0< \dl< 1$. Then, for 
$ \displaystyle
	|n_1|\gg \max_{0\leq n \leq n_0 }  |  \partial_n \,\pshallow_{\dl}(n)   | $,
	we have	
	\begin{align}
	|\Os_{k+2}  ( \wt n)|\ges  |n_3+n_4||n_1|,
	\label{Rd4}
	\end{align}
	
	\end{itemize}

\end{lemma}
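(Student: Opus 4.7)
My plan is to follow the blueprint of Lemma \ref{lem_res1} (itself modelled on \cite[Lemma~4.4]{MT}), exploiting the oddness of both $\pdeep_\dl$ and $\pshallow_\dl$ together with the zero-sum constraint $\sum_{j=1}^{k+2} n_j = 0$. Writing $p_\dl$ for either dispersion symbol and using $p_\dl(-n)=-p_\dl(n)$, I would pair $(n_1,n_2)$ and $(n_3,n_4)$ and invoke the mean value theorem twice:
\begin{align*}
p_\dl(n_1)+p_\dl(n_2) &= p_\dl(n_1)-p_\dl(-n_2) = (n_1+n_2)\,p_\dl'(\eta_1),\\
p_\dl(n_3)+p_\dl(n_4) &= p_\dl(n_3)-p_\dl(-n_4) = (n_3+n_4)\,p_\dl'(\eta_2),
\end{align*}
with $|\eta_1|\sim|n_1|$ (from $|n_1|\sim|n_2|$) and $|\eta_2|\les|n_3|$. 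Substituting $n_1+n_2=-(n_3+n_4)-\sum_{j\geq 5}n_j$ yields
\[
\Od_{k+2}(\wt n) = -(n_3+n_4)\bigl[p_\dl'(\eta_1)-p_\dl'(\eta_2)\bigr]-p_\dl'(\eta_1)\sum_{j\geq 5}n_j+\sum_{j\geq 5}p_\dl(n_j),
\]
so the task reduces to (a) extracting the size $|n_3+n_4|\,|n_1|$ from the bracket and (b) absorbing the two residual sums, which vanish for $k=2$.

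For step (a), I would combine Corollary \ref{LEM:p2b}, Lemma \ref{LEM:p12}, and Lemma \ref{LEM:GW31} to secure the uniform-in-$\dl$ lower bound $|p_\dl'(\eta_1)|\sim|n_1|$. In the deep-water range $\dl\geq 2$, the identity $\pdeep_\dl(n)=nK_\dl(n)$ together with $K_\dl(n)\sim|n|$ and Lemma \ref{LEM:GW31} gives $|\partial_n\pdeep_\dl(n)|\sim|n|$ for $|n|\sim|n_1|$. In the shallow-water range $0<\dl<1$, using $\pshallow_\dl=\tfrac{3}{\dl}\pdeep_\dl$, I would split into $|n_1|\les 1/\dl$ (where $|\partial_n\pshallow_\dl(n)|\sim|n|^2\ges|n|$) and $|n_1|\gg 1/\dl$ (where $|\partial_n\pshallow_\dl(n)|\sim|n|/\dl\ges|n|$ since $\dl<1$). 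Choosing $n_0>0$ large enough that these asymptotics take over outside $[0,n_0]$ and that $|n_3|\leq n_0$, the hypothesis $|n_1|\gg\max_{0\leq n\leq n_0}|\partial_n p_\dl(n)|$ places $|n_1|$ in the asymptotic regime while simultaneously forcing $|p_\dl'(\eta_2)|\ll|n_1|$, so $|p_\dl'(\eta_1)-p_\dl'(\eta_2)|\sim|n_1|$ and the bracket contributes $\ges|n_3+n_4|\,|n_1|$.

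For step (b), the assumption $|n_3+n_4|\gg k\max_{j\geq 5}|n_j|$ dispatches both residuals: $|p_\dl'(\eta_1)|\sum_{j\geq 5}|n_j|\les k|n_1|\max_{j\geq 5}|n_j|\ll|n_3+n_4|\,|n_1|$, and after a further application of the mean value theorem to $p_\dl(n_j)=p_\dl(n_j)-p_\dl(0)$, the tail $\sum_{j\geq 5}|p_\dl(n_j)|$ is similarly dwarfed by the leading term. Collecting the three pieces yields the deep-water bound \eqref{Rd3}, and running the identical argument with $\pshallow_\dl$ in place of $\pdeep_\dl$ gives the shallow-water bound \eqref{Rd4}. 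The main obstacle is the uniform-in-$\dl$ control of $|\partial_n\pshallow_\dl|$ from below: the threshold frequency $|n|\sim 1/\dl$ moves with $\dl$, so a case split on whether $|n_1|\ges 1/\dl$ or $|n_1|\ll 1/\dl$ is unavoidable, and in each case the $3/\dl$ prefactor must be tracked carefully to recover the bound $\ges|n|$. Once this uniform lower bound is in place, the rest reduces to two applications of the mean value theorem to an odd function.
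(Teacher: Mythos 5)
Your overall strategy -- pairing $(n_1,n_2)$ and $(n_3,n_4)$, applying the mean value theorem twice to the odd symbol, substituting the zero-sum constraint, and then case-splitting on the moving threshold $|n|\sim 1/\dl$ -- is exactly the paper's argument, so the skeleton is sound. However, there is one concrete gap at precisely the point the paper identifies as the crux.

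The gap is in your control of $|p_\dl'(\eta_2)|$. You propose to choose $n_0$ "large enough that $|n_3|\leq n_0$" and then invoke the hypothesis $|n_1|\gg \max_{0\leq n\leq n_0}|\partial_n p_\dl(n)|$ to conclude $|p_\dl'(\eta_2)|\ll |n_1|$. But $n_0$ in the lemma is a fixed absolute constant, chosen once and for all before the frequencies are quantified, whereas $|n_3|$ is only constrained by $|n_3|\ll|n_1|$ and can be arbitrarily large. So the case $|\eta_2|>n_0$ cannot be dismissed, and in the shallow-water regime this is exactly where the naive argument breaks: for $|\eta_2|\ges 1/\dl$ one has $|\partial_n\pshallow_\dl(\eta_2)|\sim|\eta_2|/\dl$, which admits no uniform-in-$\dl$ upper bound in terms of $n_0$. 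The correct repair (and what the paper does) is to abandon the $n_0$ device for $\eta_2$ entirely and instead run the same $1/\dl$ case analysis on $\eta_2$ that you already run on $\eta_1$, using only $|\eta_2|\les|n_3|\ll|n_1|$: if $|n_1|\les 1/\dl$ then $|\partial_n\pshallow_\dl(\eta_2)|\sim|\eta_2|^2\ll|n_1|^2\sim|\partial_n\pshallow_\dl(\eta_1)|$; if $|n_1|\ges 1/\dl$ then either $|\eta_2|\ges 1/\dl$ and $|\partial_n\pshallow_\dl(\eta_2)|\sim|\eta_2|/\dl\ll|n_1|/\dl$, or $|\eta_2|\les 1/\dl$ and $|\partial_n\pshallow_\dl(\eta_2)|\sim|\eta_2|^2\les|\eta_2|/\dl\ll|n_1|/\dl$. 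In all cases $|\partial_n\pshallow_\dl(\eta_2)|\ll|\partial_n\pshallow_\dl(\eta_1)|$, which is what the bracket actually needs. (A minor related imprecision: the bracket is $\sim|\partial_n\pshallow_\dl(\eta_1)|$, which is $\ges|n_1|$ but may be as large as $|n_1|^2$ or $|n_1|/\dl$; writing "$\sim|n_1|$" is wrong as an equivalence, though harmless for the one-sided bound you need. The same remark applies to your treatment of the tail $\sum_{j\geq5}|p_\dl(n_j)|$, which also requires the $|\xi_j|$ versus $1/\dl$ dichotomy rather than a bound through $n_0$.) Once this is fixed, your argument coincides with the paper's proof of \eqref{Rd4}; the paper additionally separates the cases $n_3n_4\geq0$ (where $|n_3+n_4|=|n_3|+|n_4|$ and the argument of \cite[Lemma 4.5]{MT} applies directly) and $n_3n_4<0$, but your uniform double-MVT treatment covers both.
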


\begin{proof}

The proof is similar to that of \cite[Lemma 4.5]{MT}. We will only discuss the case when $|n_3| \sim |n_4|$. In this case, \eqref{HH1ab} implies the uniform (in $\dl$) lower and upper bounds of $\pdeep_\dl (n)$. In particular, we have Lemma \eqref{LEM:GW31} for $\dl \geq 2$. Thus, \eqref{Rd3} follows in the same way as in \cite[Lemma 4.5]{MT}. However, to obtain \eqref{Rd4}, we need to consider different cases by dividing the frequency regimes according to $\frac{1}{\dl}$.

	\smallskip
	\noindent
	\textbf{Proof of \eqref{Rd4}}.
	\smallskip

	Let $k \geq3$ and $n_3\sim n_4$.
	Then,
we need to consider $n_{3}$ and $n_{4}$ have the same or different signs.
	If  $n_3 n_4\ge0$.
	Then, we can write $|n_3+n_4|=|n_3|+|n_4|$.
	Moreover, we have 
	\[
	|n_3|,|n_4|\gg k\max_{j\ge5}|n_j|.
	\]

	\noi
	Therefore, the same proof \cite[Lemma 4.5]{MT} implies \eqref{Rd4}.

When $n_3 n_4<0$.
	By using the mean value theorem, there exist $k_1,k_2\in\R$ satisfying 
\begin{align}
\label{eq:ex}
	|k_1|\sim |n_1|\sim |n_2|
	\qquad {\rm and}
	\qquad
	|n_4|\les| k_2|\les|n_3|
\end{align}

	\noi
	such that
	\begin{align}
	\begin{aligned}
	-\Os_{k+2} (\wt n)
	&=-(n_1+n_2) \,  \partial_n \, \pshallow_{\dl}(k_1)
	-(n_3+n_4)   \,  \partial_n \, \pshallow_{\dl}(k_2) -\sum_{j=5}^{k+2} \pshallow_{\dl}(n_j)  \\
	&=(n_3+n_4+\cdots+n_{k+2})
	\,    \partial_n \, \pshallow_{\dl}(k_1) 
	-(n_3+n_4)
	\, \partial_n \,  \pshallow_{\dl}(k_2)   - \sum_{j=5}^{k+2} \pshallow_{\dl}(n_j)
	\end{aligned}
	\label{Rdd1}
	\end{align}

	\noi
	where we used the property of $\pshallow_{\dl}(n)$ being an odd-function. Moreover, to see \eqref{eq:ex}, we notice
 $k_{1}$ is between $-n_{1}$ and $n_{2}$, 
		and 
		$k_{2}$ is between $-n_{3}$ and $n_{4}$.
		Since, $|n_{1}| \sim |n_{2}|\gg |n_{3} +\dots +  n_{k+2}|$
		and $ \sum_{j=1}^{k+2}n_{j}=0$,
		we have $ -n_{1}$ and $n_{2}$ must have the same sign. Thus 
		$|k_{1}|\sim |n_{1} | \sim |n_{2}|$.
		Moreover,
		this case, we are under the assumption that $n_{3}n_{4} <0$.
		Therefore, $|k_{2}| \sim |n_{3}| \sim |n_{4}|$.

	Next, it is enough to show 
	\begin{equation*}
	  \big| (n_3+n_4)  \, \partial_n \, \pshallow_{\dl}(k_2)  \big|
	\qquad
	{\rm and}
	\qquad
	\sum_{j=5}^{k+2}   \big| \pshallow_{\dl}(n_j) \big|
	\end{equation*}
	are negligible comparing to $ |n_3+n_4||n_1|$.
Here, we observe
when 
	$|k_2|\le n_0$, by our constraint we have
	\begin{equation*}
	| \partial_n \, \pshallow_{\dl}(k_2)  | \leq \max_{0\leq n\leq n_0 }|   \partial_n \, \pshallow_{\dl}(n)  |\ll   |n_1|.
	\end{equation*}
	
	\noi
But, if
	$|k_2|\ge n_0\ges \frac{1}{\dl}$, by Lemma \eqref{LEM:GW31} we have no {\bf uniform} in $\dl$ upper bound on 
 	$|  \partial_n \, \pshallow_{\dl}(k_2) |  $.
 	This is where 
	the direct application of  the proof of \cite[Lemma 4.5]{MT}  fails.

In order to obtain the uniform lower bound \eqref{Rd4}, we need more information on  $\pshallow_{\dl}(n)$.
	Following from \eqref{Rdd1}
 if we have claim:
	\begin{align}
	\label{Rddc}
	|\Os_{k+2} (\wt n)|
	\sim |n_{3}+n_{4}| | \partial \pshallow_{\dl}(k_{1}) |  \quad 
	\text{where} \quad
	|k_{1}|\sim |n_{1}|\sim |n_{2}|.
	\end{align}

	\noi
By using Lemma \ref{LEM:GW31}, \eqref{Rddc}  means that if we have the following  
	\begin{align}
	\label{Rddc1}
	\begin{cases}
	|\Os_{k+2} (\wt n)|  \sim |n_{3}+n_{4}| |n_{1}|^{2}
	\quad
	&\text{for} \quad
	|n_{1}|\les \frac1\dl,\\
	|\Os_{k+2} (\wt n)|  \sim  \frac1\dl |n_{3}+n_{4}| |n_{1}|
	\quad
	&\text{for} \quad
	|n_{1}|\ges \frac1\dl.
	\end{cases}
	\end{align}

	\noi
Hence,
claim \eqref{Rddc} and
 \eqref{Rddc1} with the condition $0<\dl\ll1$ imply that
	for any $n_{1}\in \Z^*$  we have \eqref{Rd4}.
Next, we prove claim \eqref{Rddc}.

	\smallskip
	\noindent
	\textbf{Case 1:} $|n_1|\les \frac1\dl$.
	\smallskip

	In this case, we have $|n_{j}| \les \frac1\dl$ for all $j\geq 1$.
	Then, we have the following 4 estimates:
	\begin{itemize}
		\item
		$ |n_{3}+n_{4}| | \partial \pshallow_{\dl}(k_{1}) |  \sim  |n_{3}+n_{4}|  |n_{1}|^{2} $
		
		\smallskip		
		\item
		$ |n_{5}+\dots + n_{k+2}|
		| \partial \pshallow_{\dl}(k_{1}) |  \sim  |n_{3}+n_{4}|  |n_{1}|^{2} $

		\smallskip		
		\item
		$ |n_{3}+n_{4}| | \partial \pshallow_{\dl}(k_{2}) |  \sim  |n_{3}+n_{4}|  |n_{3}|^{2}
		\ll   |n_{3}+n_{4}|  |n_{1}|^{2} $
		
		\smallskip		
		\item
		$\displaystyle \sum_{j=5}^{k+2}  |\pshallow_{\dl}(n_{j}) |\sim  \sum_{j=5}^{k+2}  |n_{j}|^{3}   \ll   |n_{3}+n_{4}|  |n_{1}|^{2} $

	\end{itemize}

	This completes the proof for this case.

	\smallskip
	\noindent
	\textbf{Case 2:} $|n_1|\ges \frac1\dl$.
	\smallskip

	In this case, we  have $|n_1|\sim |n_{2}|\ges \frac1\dl$.
	Then, Lemma \ref{LEM:GW31} implies 
	\begin{itemize}
		\item
		$ |n_{3}+n_{4}| | \partial \pshallow_{\dl}(k_{1}) |  \sim  \frac1\dl  |n_{3}+n_{4}|  
		|n_{1}|   \ges  |n_{3}+n_{4}| 
		|n_{1}|  $
		
	\smallskip	
		\item
		$ |n_{5}+\dots + n_{k+2}|
		| \partial \pshallow_{\dl}(k_{1}) |  
		\sim \frac1\dl |n_{5}+\dots + n_{k+2}|   |n_{1}|   \ll  \frac1\dl
		|n_{3}+n_{4}| 
		|n_{1}|     $

	\end{itemize}

	For the remaining terms, we need to consider
	cases depending on how big or small these frequencies are when compared to  $\frac1\dl$.
	\begin{equation*}
	|\partial \pshallow_{\dl}(k_{2}) | \sim
	\begin{cases}
	\frac1\dl |k_{2}|, \quad \text{if} \,\,  |k_{2}|\ges \frac1\dl;\\
	|k_{2}|^{2}, \quad \, \text{if} \,\,  |k_{2}|\les \frac1\dl.
	\end{cases}
	\end{equation*}

	\noi
	Since, we have $|k_{2}| \sim |n_{3}|\ll |n_{1}|$ and $|n_{1}| \ges \frac1\dl$. Then,
	\begin{itemize}
		\item
		when $|k_{2}| \ges \frac1\dl $,
		$|\partial \pshallow_{\dl}(k_{2}) | \sim \frac1\dl |k_{2}| \ll \frac1\dl |n_{1}| \sim |\partial \pshallow_{\dl}(k_{1})|$;
		
	\smallskip			
		\item
		when $|k_{2}| \les \frac1\dl $,
		$|\partial \pshallow_{\dl}(k_{2}) |^{2} \sim \frac1\dl |k_{2}| \ll \frac1\dl |n_{1}| \sim |\partial \pshallow_{\dl}(k_{1})|$.

	\end{itemize}

	\noi
	Therefore,
	we always have 
	$|\partial \pshallow_{\dl}(k_{2}) | |n_{3} +n_{4}| \ll |n_{3} +n_{4}| \frac1\dl |n_{1}|$.
	Next, for each $j\geq 5$, we have
	\begin{equation*}
	|\partial  \pshallow_{\dl}(n_{j}) | \sim
	\begin{cases}
	\frac1\dl |n_{j}|^{2}, \quad \text{if} \,\,  |n_{j}|\ges \frac1\dl;\\
	|n_{j}|^{3}, \quad \, \text{if} \,\,  |n_{j}|\les \frac1\dl.
	\end{cases}
	\end{equation*}

	\noi
	If $ |n_{j}|\les \frac1\dl$, then 
	\begin{align*}
	|\pshallow_{\dl}(n_{j})|\sim |n_{j}|^{3}&\ll  \frac1k  |n_{3} +n_{4}|  |n_{j}|^{2}\\
	&\ll \frac{1}{k\dl}  |n_{3} +n_{4}|  |n_{1}|.
	\end{align*}

	\noi
	If $ |n_{j}|\ges \frac1\dl$, then 
	\begin{align*}
	|\pshallow_{\dl}(n_{j})|\sim  \frac1\dl |n_{j}|^{2} \ll \frac{1}{k\dl}  |n_{3} +n_{4}|  |n_{1}|.
	\end{align*}

	\noi
	Hence, we can conclude  \eqref{Rddc}.
	For the case $k=2$, we can argue exactly as above.
\end{proof}

\subsection{Function spaces and their basic properties}

In this subsection, we introduce the function spaces and their properties. To start with, we present a sequence of positive numbers $\{\om_N\}_{N}$, which is an increasing sequence that depends on the dyadic number $N \in 2^{\ZP}$. This sequence of weights $\{\om_N\}_{N}$ is referred to as the frequency envelope in \cite[Section 5]{Tao04}. Its main purpose is to be useful in proving continuity with respect to the initial data, see Remark \ref{RM:cts}. This technique was first introduced in \cite{KT}. 
Additionally, we extract the following result from \cite[Lemma 4.6]{MT}, which will assist us in choosing our frequency envelope $\om_{N}$.

\begin{lemma}[\cite{MT}~Lemma 4.6]
	\label{LEM_ev}
Let $\kk> 1$, suppose the dyadic sequence $\{\om_N\}$ of positive numbers~satisfies 
\begin{align}
\label{eq_dydic}
\om_N\le \om_{2N}\le \kk\om_N
\quad\quad
\text{for}
\quad 
N\ge1,
\end{align}
	
	\noi
	and $\om_N\to\infty$ as $N\to\infty$.
	Then, for any $1<\kk'<\kk$, there exists a dyadic sequence $\{\wt{\om}_N\}$ such that
\begin{align*}
	\wt{\om}_N\le \om_N, 
	\quad
	\wt{\om}_N\le \wt{\om}_{2N}\le \kk'\wt{\om}_N
	\quad \quad
	\text{for}
	\quad
	N\ge 1
\end{align*}

	\noi
	and $\wt{\om}_N\to\infty$ as $N\to\infty$.
\end{lemma}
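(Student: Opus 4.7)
The plan is to build $\{\widetilde{\omega}_N\}$ by a greedy recursive construction that caps the growth rate at $\kappa'$ per dyadic step while always keeping it below $\omega_N$. Specifically, I would set $\widetilde{\omega}_1 := \omega_1$ and, for each dyadic $N \ge 1$, define
$$\widetilde{\omega}_{2N} := \min\bigl\{\kappa'\, \widetilde{\omega}_N,\; \omega_{2N}\bigr\}.$$
With this definition, the upper ratio bound $\widetilde{\omega}_{2N} \le \kappa' \widetilde{\omega}_N$ is built in, and the domination $\widetilde{\omega}_N \le \omega_N$ follows by a one-line induction using $\widetilde{\omega}_1 = \omega_1$ as the base case.

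For the monotonicity $\widetilde{\omega}_N \le \widetilde{\omega}_{2N}$ I would argue case-by-case on which term attains the minimum. If $\widetilde{\omega}_{2N} = \kappa'\widetilde{\omega}_N$, then $\kappa'>1$ gives the conclusion directly. If $\widetilde{\omega}_{2N} = \omega_{2N}$, then the inductive bound $\widetilde{\omega}_N \le \omega_N$ together with the monotonicity $\omega_N \le \omega_{2N}$ inherited from the left inequality in \eqref{eq_dydic} yields $\widetilde{\omega}_{2N} = \omega_{2N} \ge \omega_N \ge \widetilde{\omega}_N$. No further combinatorial work is needed here, since the recursion has at each step only these two possibilities.

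The one step I expect to require a separate (though short) argument is the divergence $\widetilde{\omega}_N \to \infty$: a priori the recursion could freeze at the ceiling imposed by $\omega_{2N}$. I would argue by contradiction: if $\widetilde{\omega}_N \le L$ for every $N$, then by hypothesis $\omega_N \to \infty$, so there exists $N_0$ with $\omega_N > \kappa' L$ for all $N \ge N_0$. For such $N$ one has $\kappa'\widetilde{\omega}_N \le \kappa' L < \omega_{2N}$, so the minimum in the definition is realized by the first argument, forcing $\widetilde{\omega}_{2N} = \kappa'\widetilde{\omega}_N$. Iterating gives $\widetilde{\omega}_{2^k N_0} = (\kappa')^k \widetilde{\omega}_{N_0}$, which tends to infinity since $\kappa' > 1$, contradicting the assumed boundedness. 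This is the only place where the assumption $\omega_N \to \infty$ (as opposed to just the two-sided doubling bound \eqref{eq_dydic}) is actually used, and so it is the main—though modest—obstacle in the argument.
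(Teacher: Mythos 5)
Your construction is correct and complete. The paper itself gives no proof of this lemma---it is quoted verbatim from \cite{MT}, Lemma 4.6---so there is no in-paper argument to compare against; judged on its own, your greedy recursion $\wt{\om}_{2N}=\min\{\kk'\wt{\om}_N,\,\om_{2N}\}$ with $\wt{\om}_1=\om_1$ is essentially the standard construction (it coincides with the infimal-convolution definition $\wt{\om}_N=\min_{M\le N}(\kk')^{\log_2(N/M)}\om_M$ one finds in the literature), and each of your four verifications---domination by $\om_N$, the two ratio inequalities, and the divergence via the contradiction argument---is sound. Two small remarks: your argument never uses the upper doubling bound $\om_{2N}\le\kk\om_N$ nor the restriction $\kk'<\kk$, only $\kk'>1$, so you have in fact proved a marginally more general statement; and the divergence step could equally be phrased without contradiction by noting that $\{\wt{\om}_N\}$ is nondecreasing, so if it did not diverge it would converge to a finite limit $L$, after which your iteration argument applies verbatim.
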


With the aid of Lemma \ref{LEM_ev}, for a given dyadic sequence ${\omega_N}$ of positive numbers, it is possible to choose $\kk \leq 2$. This in turn allows us to define a new dyadic sequence. Given two dyadic numbers $N$ and $M$ such that $1 \leq M \leq \lambda N$ for some $\lambda \geq 2$, we can use the inequality $\omega_{2N} \leq \kk \omega_N$ to deduce that:
\begin{equation*}
\frac{\om_M}{\om_N}
\les \kk^{\log_2 \lambda}
\les \lambda
\end{equation*}

\noi
which is uniformly in $\kk$.

In light of the preceding discussion on the dyadic sequence ${\omega_N}$ of positive numbers, we propose a slight modification to the definition of the $L^2$-based Sobolev spaces. For a given value of $s \geq 0$, we define the space $H_\omega^s(\mathbb{T})$ with the following norm:
\begin{align*}
\|u\|_{H_\om^s}
:=\Big(\sum_{N,\textup{dyadic} }\om_N^2 (1\vee N)^{2s}\|P_N u\|_{L^2}^2    \Big)^{\frac12}.
\end{align*}

\noi
One simple observation is by selecting $\omega_N \equiv 1$, we can recover the standard $L^2$-based Sobolev space. In other words, if we set $\omega_N \equiv 1$, then $H_\omega^s(\mathbb{T}) = H^s(\mathbb{T})$.

For a given range of values $1 \leq p \leq \infty$ and a positive time value $T > 0$, let $B_x$ be an arbitrary Banach space. To facilitate our analysis, we introduce the following shorthand notation:
\begin{align*}
L_t^p B_x :=L^p(\R;B_x)
\qquad
\text{\rm and}
\qquad
L_T^p B_x :=L^p([0,T];B_x)
\end{align*}

\noi
equipped with the norms
\begin{align*}
\|u\|_{L_t^p B_x}=\Big(\int_\R\|u(t,\cdot)\|_{B_x}^p dt    \Big)^{\frac1p}\qquad
\textrm{and}\qquad
\|u\|_{L_T^p B_x}=\Big(     \int_0^T\|u(t,\cdot)\|_{B_x}^p dt     \Big)^{\frac1p},
\end{align*}

\noi
respectively.
In the case where $p = \infty$, the physical space is modified to the space of essentially bounded measurable functions, equipped with the essential supremum norm.

As is typical in the low-regularity analysis of dispersive  PDEs, the Fourier restriction norm method plays a crucial role. This method was introduced in the publications by Bourgain \cite{B93a, B93b}. For given values of $s, b \in \mathbb{R}$, we define the space $X^{s,b,\delta}(\mathbb{T} \times \mathbb{R})$, denoted by $X^{s,b,\delta}$, as the completion of the test functions with respect to the following norm:
\begin{align}
\label{deX}
\begin{aligned}
\|u\|_{X^{s,b,\dl }(\T\times \R)}
&=\Big(\sum_{n=-\infty}^\infty \int_{-\infty}^\infty \jb{n}^{2s}\jb{\tau-\pdeep_{\dl}(n)}^{2b}|\ft{u}(\tau,n)|^2 d\tau    \Big)^{\frac12}\\
&=\|   \jb{n}^{s}\jb{\tau-\pdeep_{\dl}(n)}^{b}    \ft{u}(\tau,n)  \|_{\l^{2}_{n}   L^{2}_{\tau} }.
\end{aligned}
\end{align}

\noi
Similarly, we define $Y^{s,b,\dl}(\T\times \R)$ (=$Y^{s,b,\dl}$)  as a completion of the
test functions under the following norm:
\begin{align*}
\begin{aligned}
\|v\|_{Y^{s,b,\dl}(\T\times \R)}
&=\Big(\sum_{n=-\infty}^\infty \int_{-\infty}^\infty \jb{n}^{2s}\jb{\tau-\pshallow_{\dl}(n)}^{2b}|\ft{v}(\tau,n)|^2 d\tau    \Big)^{\frac12}\\
&=\|   \jb{n}^{s}\jb{\tau-\pshallow_{\dl}(n)}^{b}    \ft{v}(\tau,n)  \|_{\l^{2}_{n}   L^{2}_{\tau} }.
\end{aligned}
\end{align*}

\noi
We also use a slightly stronger space $X_\om^{s,b,\dl}$ with the norm
\begin{equation*}
\|u\|_{X_\om^{s,b,\dl}}
:=\Big(   \sum_{N}\om_N^2 (1\vee N)^{2s}\|P_N u\|_{X^{0,b}}^2  \Big)^{\frac12},
\end{equation*}

\noi
and
$Y_\om^{s,b,\dl}$ with the norm
\begin{equation*}
\|v\|_{Y_\om^{s,b,\dl}}
:=\Big(   \sum_{N}\om_N^2 (1\vee N)^{2s}\|P_N v\|_{Y^{0,b}}^2  \Big)^{\frac12}.
\end{equation*}

\noi
Moreover,
we define the function spaces $M^{s,\dl} $  and $M^{s,\dl}_\om $  in the following way.
\begin{equation*}
M^{s,\dl}:= L_t^\infty H^s\cap X^{s-1,1,\dl}
\qquad\qquad
M^{s,\dl}_\om:= L_t^\infty H_\om^s \cap X_\om^{s-1,1,\dl},
\end{equation*}

\noi
endowed with the natural norm
\begin{equation*}
\|u\|_{M^{s,\dl}  }=\|u\|_{L_t^\infty H^s}+\|u\|_{X^{s-1,1,\dl}}
\qquad \qquad
\|u\|_{M^{s,\dl}_\om}=\|u\|_{L_t^\infty H^s_\om}+\|u\|_{X^{s-1,1,\dl}_\om} .
\end{equation*}

\noi
In the same line as above, we 
define the function spaces $N^{s,\dl} $  and $N^{s,\dl}_\om $:
\begin{equation*}
N^{s,\dl}:= L_t^\infty H^s\cap Y^{s-1,1,\dl}
\qquad\qquad
N^{s,\dl}_\om:= L_t^\infty H_\om^s \cap Y_\om^{s-1,1,\dl},
\end{equation*}

\noi
endowed with the natural norm
\begin{equation*}
\|v\|_{N^{s,\dl}  }=\|v\|_{L_t^\infty H^s}+\|v\|_{Y^{s-1,1,\dl}}
\qquad \qquad
\|v\|_{N^{s,\dl}_\om}=\|v\|_{L_t^\infty H^s_\om}+\|v\|_{Y^{s-1,1,\dl}_\om} .
\end{equation*}

\noi
In addition, we can also consider the time-restricted versions of these spaces. Given a positive time value $T > 0$ and a normed space of space-time functions $B$, the restriction space $B_T$ consists of functions $u:(0,T) \times \mathbb{T} \to \mathbb{R}$ that satisfy:
\begin{align*}
\|u\|_{B_T}
:=\inf\{   
\|  \wt{u} \|_B  \, | \,    \wt{u} : \R\times \T\to\R\, ;
\wt{u}=u\,
\textrm{on}\,
(0,T)\times\T\}<\infty.
\end{align*}

\subsection{Uniform linear estimates}

The main result of this section is to establish $\delta$-independent version of short-time Strichartz estimates, 
which were first introduced in the work by Koch-Tzvetkov \cite{KT}.
The results of this section is particularly used to control the energy-type of estimate of gILW for $k\geq 2$, this type of argument can be seen in \cite{MT}.
If we only focus on $k=2$, we do not need this section to control the nonlinear interaction, see \cite{MV15}.
 Moreover, 
 it is important to note that these uniform estimates must be established separately for the shallow-water and deep-water regimes.

\begin{proposition}
	\label{PRO:stri}

Let $k \geq 2$.
Consider $u_0, v_0 \in H^s(\mathbb{T})$ for $s > \frac 12$ and 
$u,v \in C([0,T]; H_{\omega}^{s}(\mathbb{T}))$ satisfy  gILW  \eqref{gILW1} and  scaled gILW \eqref{ILW3}, respectively, 
on the interval $[0,T]$ with $0<T<1$.
	Then,  for
$\{ \omega_N \}$ be a dyadic sequence that satisfies \eqref{eq_dydic} with $\kk\geq 1$,
the following statements hold.

		\begin{itemize}
		\item[(i)]
Let  $2\leq \dl< \infty$. Then, we have
	\begin{align*}
	\Big(     \sum_{N, {\rm dyadic}} \om_N^4\|D_x^{s-\frac 18}P_N u\|_{  L^4([0,T];L^{4} (\T)  )  }^4    \Big)^{\frac 14}
	&\leq C  T^{\frac 18}   \|u\|_{L^{\infty}([0,T]; H_\om^s(\T) ) };\\
	\Big(  \sum_{N, {\rm dyadic}}  \|D_x^{\frac 13}P_N u\|_{L^3([0,T]; L^\infty (\T)  ) }^3   \Big)^{\frac 13}
	&\leq   C T^{\frac{5}{24}}    \|u\|_{L^\infty([0,T]; H^{\frac{17}{24}}(\T) )   }.
	\end{align*}

	\smallskip			
		\item[(ii)]
Let $0<\dl\ll1$.
Then, we have
	\begin{align*}
	\Big(     \sum_{N, {\rm dyadic}}  \om_N^4\|D_x^{s-\frac {1}{8}}P_N v\|_{    L^4([0,T];L^{4} (\T)  )   }^4    \Big)^{\frac 14}
	&\leq  \wt C T^{\frac 18}   \|v\|_{   L^{\infty}([0,T]; H_\om^s(\T) )   }; 
	\\
	\Big(     \sum_{N, {\rm dyadic}}
	\|D_x^{\frac 13}P_N v\|_{     L^3([0,T]; L^\infty (\T)  )   }^3    \Big)^{\frac 13}
	&\leq \wt C  T^{\frac{5}{24}}    \|v\|_{ L^\infty([0,T]; H^{\frac{17}{24}}(\T) )  }   .
	\end{align*}

	\end{itemize}

	\noi
	Here, the constant $C=C(\|u\|_{L^\infty_{T,x}})$  and $\wt C=\wt C(\|v\|_{L^\infty_{T,x}}) $ are  independent of $\dl$.
\end{proposition}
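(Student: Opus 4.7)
The plan is to adapt the short-time Strichartz scheme of Koch-Tzvetkov \cite{KT} and its implementation for dispersion-generalised equations by Molinet-Tanaka \cite{MT}, and verify at each step that the constants remain uniform in the depth parameter $\delta$. Fixing the frequency envelope $\{\omega_N\}$ (whose slowly varying property from Lemma \ref{LEM_ev} allows it to pass transparently through the argument), I would first reduce matters to a single dyadic block by applying $P_N$ and noting that the sought estimates are equivalent to, for each $N\geq 1$,
\begin{align*}
\|D_x^{s-\frac18}P_N u\|_{L^4_T L^4_x} &\lesssim T^{\frac18}\|P_N u\|_{L^\infty_T H^s_\omega}, \\
\|D_x^{\frac13}P_N u\|_{L^3_T L^\infty_x} &\lesssim T^{\frac{5}{24}}\|P_N u\|_{L^\infty_T H^{17/24}},
\end{align*}
and the analogous statements for $v$, after which one square-sums (resp.\ cube-sums) in $N$.

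The heart of the matter is a short-time linear Strichartz estimate for the propagator $U_\delta(t)=e^{t\mathcal{G}_\delta \partial_x^2}$ (and its scaled counterpart) restricted to frequencies $|n|\sim N$. Following the $TT^*$/stationary-phase approach, such an estimate reduces to an oscillatory sum bound on the kernel
\[
K_{N,\delta}(t,x)=\sum_{n}\phi_N(n)e^{inx+itp^{(\bullet)}_\delta(n)},
\]
for which the van der Corput lemma gives decay governed by $|\partial_n^2 p^{(\bullet)}_\delta(n)|$. The critical input is Lemma \ref{LEM:GW31}: in the deep-water regime with $\delta\geq 2$ one has $|\partial_n^2 p^{(\mathrm d)}_\delta(n)|\sim 1$ for $|n|\gtrsim 1/\delta$, which is always the case since $1/\delta\leq 1/2$, so the estimate is effectively BO-type; in the shallow-water regime $p^{(\mathrm s)}_\delta$ behaves like $n^3$ for $|n|\lesssim 1/\delta$ (KdV-like, $|\partial_n^2 p^{(\mathrm s)}_\delta|\sim|n|$) and like $\delta^{-1}n^2$ for $|n|\gtrsim 1/\delta$ (BO-like, $|\partial_n^2 p^{(\mathrm s)}_\delta|\sim\delta^{-1}$). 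In each regime one chooses a time scale $T_N$ matched to the phase so that $T_N\cdot|\partial_n^2 p^{(\bullet)}_\delta|\cdot N \lesssim 1$; this yields, uniformly in $\delta$,
\[
\|P_N U_\delta(t)f\|_{L^4_{T_N}L^4_x}\lesssim N^{-\frac18}\|P_N f\|_{L^2}, \qquad \|P_N U_\delta(t)f\|_{L^3_{T_N}L^\infty_x}\lesssim N^{\frac13}T_N^{\frac16}\|P_N f\|_{L^2}.
\]

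To transfer these linear estimates to the nonlinear solution $u$, I would partition $[0,T]$ into $\sim T/T_N$ intervals $I_j=[t_j,t_{j+1}]$ and write the Duhamel formula
\[
P_N u(t)=U_\delta(t-t_j)P_N u(t_j)+\int_{t_j}^t U_\delta(t-t')P_N\partial_x(u^k)(t')\,dt' \qquad\text{on } I_j.
\]
The homogeneous term is controlled by the linear Strichartz estimate above with datum $P_N u(t_j)$, while the inhomogeneous term is bounded, via Minkowski in $t'$ and the same linear estimate, by $\|P_N\partial_x(u^k)\|_{L^1_{I_j}L^2_x}$; a standard fractional Leibniz/product estimate together with $s>\frac12$ (an algebra threshold sufficient for the nonlinear block) controls this by $|I_j|\cdot C(\|u\|_{L^\infty_{T,x}})\|u\|_{L^\infty_T H^s}$. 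Raising to the appropriate $L^p_t$ power, summing $\ell^p$ over $j$, and accounting for the number $T/T_N\sim NT$ of intervals produces the advertised $T^{1/8}$ and $T^{5/24}$ gains; the $\omega_N^2 N^{2s}$-weighted square-sum in $N$ then closes the statement.

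The main obstacle is maintaining uniformity in $\delta$ across the frequency threshold $|n|\sim 1/\delta$, which is especially delicate in the shallow-water case as $\delta\to 0$ pushes this threshold to infinity. The careful frequency-by-frequency matching of the BO-type estimate (valid for $|n|\gtrsim 1/\delta$, exploiting $|\partial_n^2 p^{(\mathrm s)}_\delta|\sim \delta^{-1}$ with the compensating shorter time scale $T_N\sim \delta N^{-1}$) to the KdV-type estimate (valid for $|n|\lesssim 1/\delta$ with $T_N\sim N^{-2}$) is what forces the particular exponents $s-\tfrac18$ and $\tfrac13$ to appear with $\delta$-independent constants. Lemmas \ref{LEM:p1d}, \ref{LEM:p12}, and \ref{LEM:GW31} provide exactly the uniform control on the symbols needed to execute this matching, while the assumption $T<1$ prevents any pile-up of interval counts in the summation over $j$.
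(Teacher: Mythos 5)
Your overall architecture --- a frequency-localized short-time linear Strichartz estimate, transferred to the solution by Duhamel on $\sim T/T_N$ subintervals and then summed against the envelope weights --- is exactly the Koch--Tzvetkov/Molinet--Tanaka scheme the paper relies on: the paper's proof simply cites \cite[Lemma 3.5]{MT} and observes that the only $\dl$-dependent inputs are the linear estimates of Lemmas \ref{lem_stri1} and \ref{LEM:str2}. Where you genuinely diverge is in how those linear inputs are obtained. The paper proves the $L^4$ bound as a Fourier restriction norm estimate, $\|u\|_{L^4_{t,x}}\les \|u\|_{X^{0,\frac38,\dl}}$ (Lemma \ref{lem_stri1}), via the convolution estimate of Lemma \ref{LEM:strs1}, whose only $\dl$-dependent ingredient is the \emph{first}-derivative lower bound $|K_\dl(n)|,|L_\dl(n)|\ges|n|$ (Corollary \ref{LEM:p2b}, Lemma \ref{LEM:p12}) fed into the lattice-point counting lemma of \cite{ST01}; the $T^{\frac18}$ gain of Lemma \ref{LEM:str2} then comes from the standard time-localization gain $T^{\frac12-b}$ with $b=\frac38$, not from kernel decay. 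You instead propose a $TT^*$/van der Corput bound on the kernel $\sum_n\phi_N(n)e^{inx+it p_\dl(n)}$ governed by the \emph{second} derivative of the symbol.

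That substitution is where I see a genuine gap. First, on $\T$ a stationary-phase bound on this lattice sum is not free: one must pass through Poisson summation and control the non-zero dual-lattice contributions, which is precisely what Bourgain's counting method is designed to avoid, and you do not address this. Second, and more seriously for uniformity, in the shallow-water regime one has $|\partial_n^2 \pshallow_{\dl}(n)|\sim \dl^{-1}$ for $|n|\ges \dl^{-1}$, which blows up as $\dl\to 0$; your matching condition then forces $T_N\sim\dl N^{-1}$, so the interval count $T/T_N\sim TN/\dl$ diverges, and whether the per-interval gain exactly compensates this divergence is the entire content of the uniformity claim --- you assert it rather than verify it. (Note also that Lemma \ref{LEM:GW31}, which you invoke for both regimes, is stated only for $\pdeep_{\dl}$.) The paper's route sidesteps both issues because the counting argument needs only $|\partial_n \pshallow_{\dl}(n)|\ges|n|$, which is uniform over $0<\dl\les 1$ by Lemma \ref{LEM:p12}(iii). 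If you wish to keep the kernel-based approach, the missing step is a proof that the short-time dispersive estimate for $S_{\dl}^{{\rm (s)}}(t)$ holds with constants and exponents independent of $\dl$ across the threshold $|n|\sim\dl^{-1}$; as written, that is not established.
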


\begin{proof}

The proof follows from \cite[Lemma 3.5]{MT},
once we obtain uniform (in $\dl$) estimates of Lemmas \ref{lem_stri1} and \ref{LEM:str2}.	
	We shall skip the proof here.

\end{proof}

The first step in proving Proposition \ref{PRO:stri} is to establish the following $L^4$-Strichartz estimate, which was first introduced in \cite{B93a, B93b}. 
\begin{lemma}[Uniform $L^4$-Strichartz estimate]
	\label{lem_stri1}
	
	Let $u \in X^{0,\frac38,\dl} (\T\times\R)  $ and 
	$v \in Y^{0,\frac38,\dl}  (\T\times\R) $.
	Then, there exists a universal constant $C$ such that the following estimates hold.
	
		\begin{itemize}
		\item[(i)]
Let $2\leq \dl \leq \infty$.
Then, we have
	\begin{align*}
	\|u\|_{L^4(\R;L^{4}(\T))} \leq C\|u\|_{X^{0,\frac38,\dl} (\T\times\R) }.
	\end{align*}

	\smallskip	
		\item[(ii)]
Let $0<\dl\ll1$. Then, we have
	\begin{align*}
	\|v\|_{L^4(\R;L^{4}(\T))}  \leq C\|v\|_{Y^{0,\frac38,\dl} (\T\times\R) }.
	\end{align*}

	\end{itemize}

\end{lemma}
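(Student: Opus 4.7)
The plan is to adapt Bourgain's classical $L^4$-Strichartz argument for BO/KdV to the ILW-type dispersion, tracking all constants carefully to ensure uniformity in the depth parameter. I focus on part (i); part (ii) follows the same template but requires additional case analysis due to the transition in dispersive behavior at the scale $|n|\sim 1/\dl$.

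Squaring the target and applying Plancherel gives $\|u\|_{L^4_{t,x}}^2 = \|u^2\|_{L^2_{t,x}}$, where on the Fourier side
\begin{equation*}
\widehat{u^2}(n,\tau) = \int_\R \sum_{n_1\in\Z} \hat u(n_1,\tau_1)\,\hat u(n-n_1,\tau-\tau_1)\,d\tau_1.
\end{equation*}
Substituting the representation $\hat u(n,\tau) = \jb{\tau-\pdeep_\dl(n)}^{-3/8} f(n,\tau)$ with $f\in L^2_{n,\tau}$ and applying Cauchy--Schwarz in $(n_1,\tau_1)$, the proof reduces to the uniform bound
\begin{equation*}
\sup_{n\in\Z,\,\tau\in\R}\int_\R \sum_{n_1\in\Z}\frac{d\tau_1}{\jb{\tau_1-\pdeep_\dl(n_1)}^{3/4}\jb{(\tau-\tau_1)-\pdeep_\dl(n-n_1)}^{3/4}} \les 1,
\end{equation*}
with constant independent of $\dl\in[2,\infty]$. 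Performing the $\tau_1$-integral via the standard convolution inequality $\int\jb{a}^{-3/4}\jb{b-a}^{-3/4}da \les \jb{b}^{-1/2}$ further reduces everything to the counting-type estimate
\begin{equation*}
\sup_{n,\tau}\sum_{n_1\in\Z}\jb{\tau-R_n(n_1)}^{-1/2}\les 1, \qquad R_n(n_1):=\pdeep_\dl(n_1)+\pdeep_\dl(n-n_1),
\end{equation*}
which I will settle by a dyadic decomposition in the modulation $\jb{\tau-R_n(n_1)}$ combined with a uniform bound on the number of integer lattice points in each sublevel set of $R_n$.

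The decisive input is that, for $\dl\geq 2$, the symbol $\pdeep_\dl$ obeys the same quantitative size and convexity as its BO-limit, uniformly in $\dl$. Writing $\pdeep_\dl(m)=mK_\dl(m)$ with $K_\dl$ as in Corollary~\ref{LEM:p2b} one has $K_\dl(m)\sim|m|$, and combining with Lemma~\ref{LEM:GW31} yields $|\pdeep_\dl(m)|\sim|m|^2$, $|\partial_m\pdeep_\dl(m)|\sim|m|$, and $|\partial_m^2\pdeep_\dl(m)|\sim 1$ for $|m|\ges 1$. These estimates allow me to argue exactly as in the BO case: $R_n$ is piecewise strictly convex with $|R_n''|\ges 1$ when $n_1$ and $n-n_1$ share a sign, and piecewise strictly monotone with $|R_n'|\ges |n|$ when they do not; the classical counting lemma then yields the required uniform bound on the sum above, and Lemma~\ref{lem_stri1}(i) follows.

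The main obstacle is the shallow-water counterpart (ii). Here $\pshallow_\dl$ interpolates between cubic KdV-type behavior $|m|^3$ on the low-frequency scale $|m|\les 1/\dl$ and quadratic BO-type behavior $|m|^2/\dl$ on the high-frequency scale $|m|\ges 1/\dl$, by Lemma~\ref{LEM:GW31}. I will run the counting argument separately according to whether each of $n_1$ and $n-n_1$ lies below or above the transition $1/\dl$: in the high-high regime $|n_1|,|n-n_1|\ges 1/\dl$ the reasoning mirrors the BO case with an extra factor of $1/\dl$ that cancels out of the ratio driving the count; in the low-low regime $|n_1|,|n-n_1|\les 1/\dl$ one uses the cubic-type convexity $|\partial_m^2\pshallow_\dl(m)|\sim \dl|m|$ together with the uniform lower bound $L_\dl(m)\ges|m|$ from Lemma~\ref{LEM:p12}(iii); in the mixed regime monotonicity of $L_\dl$ in $\dl$ (Lemma~\ref{LEM:p12}(ii)) ensures a clean transition. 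In every subregime the resulting counting bound is the same as in the BO case, uniformly in $\dl\in(0,1)$; assembling the pieces completes the proof.
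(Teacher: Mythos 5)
Your strategy identifies the correct uniformity inputs ($K_\dl(n)\sim|n|$ from Corollary~\ref{LEM:p2b} and the derivative bounds of Lemma~\ref{LEM:GW31}), but the reduction at the heart of your argument is to a false statement. After the joint Cauchy--Schwarz in $(n_1,\tau_1)$ and the $\tau_1$-integration you claim that everything reduces to
\begin{equation*}
\sup_{n\in\Z,\,\tau\in\R}\ \sum_{n_1\in\Z}\jb{\tau-R_n(n_1)}^{-1/2}\les 1,\qquad R_n(n_1)=\pdeep_{\dl}(n_1)+\pdeep_{\dl}(n-n_1).
\end{equation*}
This supremum is infinite. Since $\pdeep_{\dl}$ is odd (Remark~\ref{RM:odd}), $R_0(n_1)\equiv 0$, so at $n=0$ the sum is $\sum_{n_1}\jb{\tau}^{-1/2}=\infty$. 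More generally, in the regime where $n_1$ and $n-n_1$ have opposite signs, the mean value theorem together with $|\partial^2_m\pdeep_{\dl}(m)|\sim 1$ gives $|\partial_{n_1}R_n(n_1)|\les|n|$, so the values $R_n(n_1)$ are spaced by at most $O(|n|)$ over infinitely many $n_1$ and $\sum_{n_1}\jb{\tau-R_n(n_1)}^{-1/2}$ diverges for every fixed $(n,\tau)$. No dyadic decomposition or lattice-point count can rescue a divergent quantity: with $b=\tfrac38<\tfrac12$ the $\tau_1$-integral leaves only $\jb{\tau-R_n(n_1)}^{-1/2}$ of decay in the resonance variable, which is summable over $n_1$ only if the resonance values are strongly separated, and they are not in the opposite-sign regime. (Your convexity claim for the same-sign regime is fine; it is the monotone regime with $|R_n'|\ges|n|$ that kills the kernel bound for small $|n|$, and in fact for all $n$.)

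The paper avoids this by never discarding the modulation localization. It first proves the bilinear estimate of Lemma~\ref{LEM:strs1}, $\|(\psi_{L_1}u)\ast(\psi_{L_2}v)\|_{L^2_\tau\ell^2_n}\les L_{\min}^{1/2}L_{\max}^{1/4}\|\psi_{L_1}u\|_{L^2_\tau\ell^2_n}\|\psi_{L_2}v\|_{L^2_\tau\ell^2_n}$, following \cite[Lemma 3.2]{MT} and the appendix of \cite{M07}, where the uniformity in $\dl$ enters exactly through $|K_\dl(n)|,|L_\dl(n)|\ges|n|$ fed into the counting lemma of \cite{ST01}. It then sums over dyadic modulations: writing $L_{\min}^{1/2}L_{\max}^{1/4}=L_1^{3/8}L_2^{3/8}(L_{\min}/L_{\max})^{1/8}$, Schur's test in $(L_1,L_2)$ produces precisely the $X^{0,\frac38,\dl}$ (resp.\ $Y^{0,\frac38,\dl}$) norm. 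If you want a self-contained proof, you must keep the localizations $Q_{L_1},Q_{L_2}$ throughout, apply Cauchy--Schwarz at fixed modulation sizes (choosing in each regime which pair of variables to count), and only then sum; a single global Cauchy--Schwarz as in your write-up cannot work at $b=\tfrac38$.
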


\begin{proof}
	
The proof of the result is
a direct consequence of Lemma \ref{LEM:strs1}.
This type of argument can be found in the Appendix of \cite{M07} for similar considerations.
\end{proof}

\begin{lemma}
	\label{LEM:strs1}
	
Consider $u,v$ belong in  $L^2(\mathbb{R}; \ell^2(\mathbb{Z}))$ 
to be real-valued functions, and let $N_1,N_2,M,\widetilde{M} \in 2^{\ZP}$ be dyadic numbers. Set $M = \min\{N_1,N_2\}$ and $\widetilde{M} = \max\{N_1,N_2\}$. Then, there exists a universal constant $C$ such that the following estimates hold.
	
		\begin{itemize}
		\item[(i)]
Let  $2\leq \dl\leq \infty$. Then,  we have
	\begin{align}
	\|(\psi_{N_1}u)*_{\tau,n}(\psi_{N_2}v)\|_{L_\tau^2 \l_n^2}
	\leq
	C M^{\frac12}  \wt M^{\frac14}
	\|\psi_{N_1}u\|_{L_\tau^2 \l_n^2}
	\|\psi_{N_2}v\|_{L_\tau^2 \l_n^2}.
	\label{eq:s1}
	\end{align}

	\smallskip	
			
		\item[(ii)]
Let $0<\dl <1 $. Then, we have
	\begin{align}
	\|(\psi_{N_1}u)*_{\tau,n}(\psi_{N_2}v)\|_{L_\tau^2 \l_n^2}
	\leq 
	C M^{\frac12}  \wt M^{\frac14}
	\|\psi_{N_1}u\|_{L_\tau^2 \l_n^2}
	\|\psi_{N_2}v\|_{L_\tau^2 \l_n^2}.
		\label{eq:s2}
	\end{align}
	
	\end{itemize}

\noi
Here, $\psi_{N}$ represents the projection onto the modulation function.

\end{lemma}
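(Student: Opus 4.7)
The plan is to prove both (i) and (ii) by a single $L^2$-based counting argument of Bourgain type, ensuring every step is uniform in $\dl$ by invoking the dispersion estimates collected in Subsection~\ref{SEC:DR}.

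First, applying the Cauchy--Schwarz inequality pointwise in $(n,\tau)$ to the convolution integrand yields
\begin{equation*}
\|(\psi_{N_1}u)\ast_{\tau,n}(\psi_{N_2}v)\|_{L^2_\tau\ell^2_n}^2
\les \sup_{(n,\tau)\in\Z\times\R}|E_\dl(n,\tau)|\cdot\|\psi_{N_1}u\|_{L^2_\tau\ell^2_n}^2\|\psi_{N_2}v\|_{L^2_\tau\ell^2_n}^2,
\end{equation*}
where $E_\dl(n,\tau)\subset\Z\times\R$ is the set of pairs $(n_1,\tau_1)$ with $|\tau_1-p_\dl(n_1)|\les N_1$ and $|\tau-\tau_1-p_\dl(n-n_1)|\les N_2$, and $p_\dl$ stands for $\pdeep_\dl$ in (i) and for $\pshallow_\dl$ in (ii). For fixed $n_1$ the admissible $\tau_1$ lie in an intersection of two intervals of lengths $\sim N_1$ and $\sim N_2$, hence in a set of measure $\les M$; combining the two modulation constraints also forces
\begin{equation*}
|\Phi_\dl(n_1)-\tau|\les \wt M,\qquad \Phi_\dl(n_1):=p_\dl(n_1)+p_\dl(n-n_1).
\end{equation*}
It therefore suffices to establish, uniformly in $\dl$, the integer level-set bound
\begin{equation*}
\#\bigl\{n_1\in\Z:|\Phi_\dl(n_1)-\tau|\les \wt M\bigr\}\les \wt M^{1/2},
\end{equation*}
which would yield $|E_\dl(n,\tau)|\les M\wt M^{1/2}$ and, upon taking square roots, both \eqref{eq:s1} and \eqref{eq:s2}.

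The level-set bound is driven by a lower estimate on $\Phi_\dl''(n_1)=p_\dl''(n_1)+p_\dl''(n-n_1)$. When $n_1$ and $n-n_1$ share the same sign, the oddness of $p_\dl$ makes the two terms add coherently. In the deep-water case Lemma~\ref{LEM:GW31} directly supplies $|\Phi_\dl''|\ges 1$ uniformly on $\Z\setminus\{0\}$ for every $\dl\ge 2$; in the shallow-water case the scaled identity $\pshallow_\dl=\tfrac{3}{\dl}\pdeep_\dl$ combined with Lemma~\ref{LEM:p12} yields a $\dl$-uniform lower bound after splitting according to the regimes $|n_1|\les 1/\dl$ and $|n_1|\ges 1/\dl$. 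A Taylor expansion of $\Phi_\dl$ around its critical point then confines the sublevel set to an interval of length $\les \wt M^{1/2}$, containing $O(\wt M^{1/2})$ integer points.

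The main technical obstacle lies in the opposite-sign configuration, where $p_\dl''(n_1)$ and $p_\dl''(n-n_1)$ may cancel and the second-derivative method degenerates --- much like the cancellation mechanism underlying the resonance analysis of Lemma~\ref{lem_res2}. In this regime I would linearise instead, working with $\Phi_\dl'(n_1)=p_\dl'(n_1)-p_\dl'(n-n_1)$; the uniform-in-$\dl$ bounds on $p_\dl'$ from Lemmas~\ref{LEM:p1d}, \ref{LEM:p12}, and~\ref{LEM:GW31} allow one to localise the level set to intervals whose integer count is again $\les \wt M^{1/2}$ after a dyadic decomposition in $|n|$. As in the proof of Lemma~\ref{lem_res2}, the shallow-water case requires further splitting across the transition threshold $|n|\sim 1/\dl$, but no new phenomena arise. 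Once the uniform counting estimate is secured, the remaining bookkeeping is parallel to the classical Bourgain $L^4_{t,x}$ argument for Schr\"odinger/BO on $\T$.
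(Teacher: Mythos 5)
Your reduction via Cauchy--Schwarz to the fiber count is the standard opening, and your same-sign analysis (where $p_\dl''(n_1)$ and $p_\dl''(n-n_1)$ add coherently and Lemma~\ref{LEM:GW31} gives $|\Phi_\dl''|\ges 1$ uniformly for $\dl\ge 2$) is fine. But the step you reduce everything to, namely the uniform level-set bound
\begin{equation*}
\#\bigl\{n_1\in\Z:|\Phi_\dl(n_1)-\tau|\les \wt M\bigr\}\les \wt M^{1/2},
\end{equation*}
is false, and the opposite-sign fix you sketch does not rescue it. Take $n=0$: since $p_\dl$ is odd, $\Phi_\dl(n_1)=p_\dl(n_1)+p_\dl(-n_1)\equiv 0$, so for $|\tau|\les\wt M$ the level set is all of $\Z$ and $\sup_{n,\tau}|E_\dl(n,\tau)|=\infty$. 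More generally, in the opposite-sign regime for $\dl\ge 2$ one has $p_\dl'(m)\approx 2|m|$, hence $|\Phi_\dl'(n_1)|=|p_\dl'(n_1)-p_\dl'(n-n_1)|\sim|n|$, and the first-derivative counting lemma of \cite{ST01} yields a count of order $\wt M/|n|+1$, which exceeds $\wt M^{1/2}$ whenever $1\le|n|\ll\wt M^{1/2}$; no dyadic decomposition in $|n|$ changes this, because your argument requires the bound for each fixed output frequency $n$. So the crude $\sup_{n,\tau}|E_\dl(n,\tau)|$ scheme cannot produce the stated $M^{1/2}\wt M^{1/4}$ constant. The estimate itself is sharp and still true in this resonant region, but it must be obtained by a different mechanism there: for small $|n|$ one exploits that, for fixed $n_1$, the contribution $\hat f(n_1,\cdot)*_\tau\hat g(n-n_1,\cdot)$ lives in a $\tau$-interval of length $\les\wt M$ centred at $p_\dl(n_1)+p_\dl(n-n_1)$, and one uses Young/Cauchy--Schwarz in $\tau$ together with almost-orthogonality of these intervals, rather than a pointwise-in-$(n,\tau)$ fiber count.

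For comparison, the paper does not attempt a self-contained combinatorial proof at all: it verifies that $\pdeep_\dl=nK_\dl(n)$ and $\pshallow_\dl=nL_\dl(n)$ satisfy Hypothesis~1 of \cite{MT} uniformly in $\dl$ (via the lower bounds $|K_\dl(n)|,|L_\dl(n)|\ges|n|$ from Corollary~\ref{LEM:p2b} and Lemma~\ref{LEM:p12}, which make the \cite{ST01} counting lemma $\dl$-independent) and then invokes the proof of \cite[Lemma~3.2]{MT} verbatim; the delicate treatment of the degenerate opposite-sign/low-$|n|$ configuration is inherited from \cite{MT, M07}. If you want a self-contained argument, you need to split off the region $|n|\les\wt M^{1/2}$ (including $n=0$) and handle it by the orthogonality-in-$\tau$ argument above; as written, your proposal asserts a counting estimate that fails exactly where the lemma is hardest.
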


\begin{proof}

The proof follows from\cite[Lemma 3.2]{MT} and it applies to both shallow-water and deep-water regimes.
In particular, we see from Remark \ref{RM:odd} that both 
	\begin{align*}
\pdeep_{\dl}(n)=nK_\dl(n) 
\qquad \qquad
\pshallow_{\dl}(n)= nL_\dl(n)
	\end{align*}
satisfy  \cite[Hypothesis~1]{MT} with some $n_{0}>0$.
Moreover, Lemma \ref{LEM:p12} and 
Corollary \ref{LEM:p2b} provide uniform (in $\dl$) lower bounds on $K_\dl(n)$ and $L_\dl(n)$ such that
\begin{align*}
|K_\dl(n)|, |L_\dl(n)| \ges |n| 
\end{align*}
where they are defined in \eqref{ub5b} and Lemma \ref{LEM:p12}. 
These uniform lower bound are the crucial step in applying the following counting~\cite[Lemma 2]{ST01}:
Let $I$ and $J$ be two intervals on the real line and $g\in C^1(J;\R)$.
	Then, we have
	\begin{align*}
	\# \{x\in J\cap\Z;g(x)\in I\}\le\frac{|I|}{\inf_{x\in J}|\partial_x g(x)|}+1.
	\end{align*}
Then,  the argument as in \cite[ Lemma 3.2]{MT} applies in our situations.
Moreover, estimates \eqref{eq:s1} and \eqref{eq:s2} are independent of $\dl$.

\end{proof}

Finally, Lemma \ref{lem_stri1} enables the establishment of uniform linear estimates. The proof of this result is based on  \cite[Lemma 2.1]{MR09}; additional references can be found in \cite{MT}.
Let $\Sdt    $ be the linear propagators of the (generalised) ILW equation defined as
\begin{align}
\label{LiP}
S_{\dl}^{{\rm (d)}}(t)
= e^{-t\G \dx^{2}}.
\end{align}
Similarly, for the scaled (generalised) ILW equation  we define 
$S_{\dl}^{{\rm (s)}}(t)
= e^{-\frac3\dl t \G \dx^{2}}$.

\begin{lemma}
	\label{LEM:str2}
	Let $T>0$,
	any $u,v\in L^2(\T)$, and 
	$S_{\dl}^{{\rm (s)}}(t), S_{\dl}^{{\rm (d)}}(t)$ as defined in \eqref{LiP}.
	Then, there exists a universal constant $C$ such that the following estimates hold.	
		\begin{itemize}
		\item[(i)]
Let $ 2\leq \dl \leq \infty$. Then,
	we have
	\begin{equation*}
	\|\Sdt u\|_{L^{4}([0,T];L^4(\T))}
	\le CT^{\frac18}\|u\|_{L^2(\T)}.
	\end{equation*}

	\smallskip	
			
		\item[(ii)]
Let $0<\dl< 1$. Then,
	we have
	\begin{equation*}
	\|\Sst v\|_{L^{4}([0,T];L^4(\T))}
	\le CT^{ \frac18}    \|v\|_{L^2(\T)}.
	\end{equation*}

	\end{itemize}

\end{lemma}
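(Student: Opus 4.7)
The plan is to reduce to the uniform $L^4$-Strichartz estimate on the Bourgain space (Lemma~\ref{lem_stri1}) by extending the free evolution to the whole real line with a smooth time cutoff, and then tracking the $T^{1/8}$ factor through the Bourgain norm of the extension. Without loss of generality, assume $0 < T \leq 1$, since the case $T > 1$ follows by partitioning $[0,T]$ into subintervals of length $1$ and applying the small-$T$ estimate on each.

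For part (i), fix $\eta \in C_c^\infty(\R)$ with $\eta \equiv 1$ on $[0,1]$ and $\supp \eta \subset [-1,2]$, and define
\[
\wt u(t,x) = \eta(t/T)\, \Sdt u(x), \qquad (t,x) \in \R \times \T.
\]
Then $\wt u \equiv \Sdt u$ on $[0,T]$, and its space-time Fourier transform is
\[
\ft{\wt u}(\tau, n) = T\, \ft{\eta}\bigl(T(\tau - \pdeep_{\dl}(n))\bigr)\, \ft u(n).
\]
Substituting $\sigma = T(\tau - \pdeep_{\dl}(n))$ in the definition of the $X^{0,\frac38,\dl}$-norm and using that $\ft\eta$ is Schwartz,
\[
\|\wt u\|_{X^{0,\frac38,\dl}}^2 = \|u\|_{L^2(\T)}^2 \cdot T \int_\R \jb{\sigma/T}^{3/4}\, |\ft{\eta}(\sigma)|^2\, d\sigma \les T^{1/4}\, \|u\|_{L^2(\T)}^2,
\]
where the last bound uses $T \leq 1$ and the elementary inequality $\jb{\sigma/T}^{3/4} \leq C T^{-3/4} \jb{\sigma}^{3/4}$ valid on the support of integration. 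Applying Lemma~\ref{lem_stri1}(i) to $\wt u$, which holds uniformly for every $\dl \in [2,\infty]$, and restricting to $[0,T]$ yields
\[
\|\Sdt u\|_{L^4([0,T];L^4(\T))} \leq \|\wt u\|_{L^4(\R;L^4(\T))} \les \|\wt u\|_{X^{0,\frac38,\dl}} \les T^{1/8}\, \|u\|_{L^2(\T)},
\]
with a constant independent of $\dl$.

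For part (ii), the argument is identical with $\pdeep_{\dl}$ replaced by $\pshallow_{\dl}$, $X^{0,\frac38,\dl}$ replaced by $Y^{0,\frac38,\dl}$, and Lemma~\ref{lem_stri1}(i) replaced by Lemma~\ref{lem_stri1}(ii), which is uniform in $\dl \in (0,1)$; the shape of the dispersion relation $\pshallow_{\dl}$ plays no role here since the argument is purely based on the cutoff $\eta$ and the Bourgain-space bilinear bound. The main obstacle is simply to produce the sharp $T^{1/8}$ gain, which reflects the general principle that localizing a free solution to a time interval of length $T$ costs $T^{1/2-b}$ in the $X^{0,b,\dl}$ norm for $b<\tfrac12$; choosing $b=\tfrac38$ gives the claimed exponent. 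Uniformity in $\dl$ is automatic once Lemma~\ref{lem_stri1} has been established uniformly, so no further analysis of the dispersion symbols $\pdeep_{\dl}$, $\pshallow_{\dl}$ is needed at this stage.
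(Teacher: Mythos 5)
Your proof is correct and follows essentially the route the paper intends: Lemma~\ref{LEM:str2} is deduced from the uniform $L^4$ Bourgain-space estimate of Lemma~\ref{lem_stri1} by inserting a time cutoff adapted to $[0,T]$ and tracking the $T^{\frac18}$ factor through the $X^{0,\frac38,\dl}$ (resp.\ $Y^{0,\frac38,\dl}$) norm of the extension, exactly as in the cited reference \cite[Lemma 2.1]{MR09}. The only caveat is your reduction of $T>1$ to $T\le 1$: partitioning $[0,T]$ into unit intervals and summing fourth powers yields a factor $T^{\frac14}$ rather than $T^{\frac18}$ (and the single-mode example $u=e^{inx}$ shows that $T^{\frac14}$ growth is unavoidable for large $T$), but this is immaterial here since the lemma is only ever applied with $0<T<1$.
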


The following difference estimate follows from \cite[Corollary 3.6]{MT}.
\begin{corollary}
	\label{COR:stri}
Let $k \geq 2$,	$s>\frac 12$ and
	$0<T<1$. The following two situations are assumed:
\begin{itemize}
	\item[\textup{(i)}]
Let  $2 \leq \dl< \infty$. Consider 
 $u^{(1)},u^{(2)}$ belong in $ C([0,T]; H^{s}(\T))$  and 
		satisfy gILW \eqref{gILW1}
		with $u^{(1)}_0,u^{(2)}_0\in H^{s}(\T)$ on $[0,T]$.
		
		\smallskip
			\item[\textup{(ii)}]
Let $0<\dl< 1$.
Consider $v^{(1)},v^{(2)}$ belong in $C([0,T]; H^{s}(\T))$  and satisfy scaled gILW \eqref{ILW3}
with $v^{(1)}_0,v^{(2)}_0\in H^{s}(\T)$ on $[0,T]$.	
	\end{itemize}

\noi
Additionally, let us define $w_1:=u^{(1)}-u^{(2)}$ and $w_2:=v^{(1)}-v^{(2)}$.
Then,  there exist    constants $C_1 = C(u^{(1)},u^{(2)})$ and $C_2 = C(v^{(1)},v^{(2)})$
such that  the following estimates hold for $j = 1, 2$.
	\begin{align*}
	&\Big(\sum_{N, \textup{dyadic }} 
	\big[
	(1\vee N)^{s-\frac98}\|P_N w_j\|_{L^4([0,T]; L^{4}(\T) )}
	\big]^4   \Big)^{\frac14}
	\leq C_j  T^{\frac18}  \|w_j\|_{L^{\infty}([0,T]; H^{s-1} (\T)  };\\
	&\Big(\sum_{N, \textup{dyadic }}
	\big[
	(1\vee N)^{-\frac{5}{12}}
	\|P_N w_j\|_{L^3([0,T]; L^4(\T) ) }\big]^3
	\Big)^{\frac 13}
	\leq C_j T^{\frac{5}{24}}  
	\|w_j\|_{L^{\infty}([0,T];  H_x^{-\frac{7}{24}} (\T) )    }.
	\end{align*}
 	
\end{corollary}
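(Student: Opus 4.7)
The plan is to derive the difference estimates from the uniform Strichartz bounds of Proposition \ref{PRO:stri} (and their linear building blocks Lemma \ref{LEM:str2} and Lemma \ref{lem_stri1}) applied to $w_j$, which itself satisfies a linearised gILW-type equation with source. I focus on case (i); case (ii) is analogous after replacing $\G \dx^2$ by $\frac{3}{\dl}\G \dx^2$ and invoking the shallow-water versions of all uniform estimates.

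Subtracting the two equations for $u^{(1)}$ and $u^{(2)}$ yields
\begin{equation*}
\dt w_1 - \G \dx^2 w_1 = \dx F_1, \qquad F_1 := w_1 \cdot \sum_{i=0}^{k-1} (u^{(1)})^{i}\, (u^{(2)})^{k-1-i},
\end{equation*}
so $w_1$ is a distributional solution of a linearised gILW equation whose coefficient $\sum_i (u^{(1)})^i (u^{(2)})^{k-1-i}$ is, by the Sobolev embedding $H^s(\T) \hookrightarrow L^\infty(\T)$ valid for $s>\tfrac12$, uniformly bounded in $L^\infty_{T,x}$ by some $C=C(u^{(1)},u^{(2)})$. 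By Duhamel,
\begin{equation*}
w_1(t) = S^{\rm (d)}_{\dl}(t)\, w_1(0) + \int_0^t S^{\rm (d)}_{\dl}(t-t')\,\dx F_1(t')\, dt'.
\end{equation*}
Both contributions may be bounded by Proposition \ref{PRO:stri}, applied at the reduced regularity level $s-1$ with the trivial frequency envelope $\om_N \equiv 1$. The drop in the derivative index on the left-hand side, from $s-\tfrac18$ to $s-\tfrac98$ (resp.\ from $\tfrac13$ to $-\tfrac{5}{12}$), reflects exactly the loss of one derivative coming from the $\dx$ sitting outside $F_1$, combined with the Bernstein reduction $\|P_N f\|_{L^4(\T)} \les N^{-\frac14}\|P_N f\|_{L^\infty(\T)}$ that converts the $L^3_T L^\infty_x$ bound in Proposition \ref{PRO:stri} into the $L^3_T L^4_x$ form required in the corollary.

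The first bound in $L^4_T L^4_x$ is then immediate, and the $L^3_T L^4_x$ bound follows after the Bernstein step, with the constants absorbing $\|u^{(1)}\|_{L^\infty_{T,x}}$ and $\|u^{(2)}\|_{L^\infty_{T,x}}$. The main obstacle is ensuring that the constants $C_j$ and the time factors $T^{1/8}$, $T^{5/24}$ remain $\dl$-independent. This is guaranteed by the uniformity of Proposition \ref{PRO:stri} in $\dl \in [2,\infty]$ (deep water) and $\dl \in (0,1)$ (shallow water), which ultimately rests on the uniform lower bounds for $|\partial_n \pdeep_\dl|$ and $|\partial_n \pshallow_\dl|$ established in Corollary \ref{LEM:p2b}, Lemma \ref{LEM:p12} and Lemma \ref{LEM:GW31}, together with the counting input behind Lemma \ref{lem_stri1}. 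No new frequency interactions arise in the linearised difference setting beyond those already controlled in the homogeneous case, so the argument of \cite[Corollary~3.6]{MT} adapts verbatim.
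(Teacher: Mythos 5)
The paper itself gives no independent proof here: it simply notes that the corollary ``follows from \cite[Corollary 3.6]{MT}'', with the uniformity in $\dl$ inherited from the uniform linear estimates already established. Your overall strategy --- write the difference equation $\dt w_1-\G\dx^2 w_1=\dx F_1$ with $F_1=w_1\sum_i(u^{(1)})^i(u^{(2)})^{k-1-i}$, note that the coefficient is bounded in $L^\infty_{T,x}$ by Sobolev embedding for $s>\frac12$, and run the Koch--Tzvetkov short-time Strichartz argument of Proposition \ref{PRO:stri} at the level $s-1$ with $\om_N\equiv 1$, the $\dl$-independence resting on Corollary \ref{LEM:p2b}, Lemma \ref{LEM:p12}, Lemma \ref{LEM:GW31} and Lemma \ref{lem_stri1} --- is exactly the intended adaptation, so on that front you are aligned with the paper.

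However, your derivation of the second estimate contains a genuinely false step. The inequality $\|P_N f\|_{L^4(\T)}\les N^{-\frac14}\|P_N f\|_{L^\infty(\T)}$ is not a Bernstein inequality and fails: for $f=e^{iNx}$ both norms are comparable to $1$, so no negative power of $N$ can be gained by lowering the Lebesgue exponent. Bernstein only gives gains in the direction $\|P_N f\|_{L^\infty}\les N^{1/4}\|P_N f\|_{L^4}$. If you instead start from the $L^3_TL^\infty_x$ bound of Proposition \ref{PRO:stri} and use the trivial embedding $L^\infty_x\hookrightarrow L^4_x$, you only reach the weight $(1\vee N)^{\frac13-1}=(1\vee N)^{-\frac23}$, which is weaker than the claimed $(1\vee N)^{-\frac{5}{12}}$. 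The correct bookkeeping does not pass through the $L^3_TL^\infty_x$ estimate at all: apply the first ($L^4_{T,x}$) family of estimates for the difference at the regularity $s'=\frac{17}{24}$, so that $s'-\frac98=-\frac{5}{12}$ on the left and $s'-1=-\frac{7}{24}$ on the right, and then use H\"older in time $\|\cdot\|_{L^3_T}\leq T^{\frac{1}{12}}\|\cdot\|_{L^4_T}$ together with $L^4_x(\T)\hookleftarrow$ nothing extra; this also accounts exactly for the time factor $T^{\frac18}\cdot T^{\frac{1}{12}}=T^{\frac{5}{24}}$ (one should also be slightly careful that the $\l^3_N$ outer sum is not majorised by the $\l^4_N$ sum for free, which is why the estimate is proved directly in \cite{MT} rather than deduced a posteriori). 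With this correction the rest of your argument stands.
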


\subsection{Uniform energy estimates}

In this subsection, we will present the crucial energy estimates that are necessary to ensure that all estimates are uniformly in $\delta$. 
For simplicity, we write $u_\delta = u$ and $v_\delta = v$ in this section.

The following lemma is a key tool for achieving unconditional uniqueness, and it utilizes the fact that for $s > \frac{1}{2}$, the solutions $u$ of the gILW equation \eqref{gILW1} and $v$ of the scaled gILW equation \eqref{ILW3} also satisfy the Duhamel formulation.

\begin{lemma}
	\label{lem1}
Let $k \geq 2$.
 Consider $u_0, v_0 \in H_\om^s(\mathbb{T})$ for $s > \frac 12$ and 
$u,v \in L^\infty([0,T]; H_\om^s(\mathbb{T}))$
 to be solutions of  gILW  \eqref{gILW1} and  scaled gILW  \eqref{ILW3}, respectively.
 Then,  for $\{ \omega_N  \}$ be a dyadic sequence that satisfies \eqref{eq_dydic} with $1\leq\kk\leq2$,
the following statements hold.

	 \begin{itemize}
	 \item[(i)]
Let  $2\leq \dl\leq \infty$. Then, $ u\in M^{s,\dl}_{\om,T}$ and  we have
	 	\begin{equation}
	 	\label{eq:ms}
	 	\|u\|_{M^{s,\dl}_{\om,T}}  \les \|u\|_{L^\infty_T H^s_\om} +C(\|u\|_{L^\infty_{T,x}}) \|u\|_{L^\infty_T H^{s}_\om}\;.
	 	\end{equation}

	 	\noi
	 	Moreover, for $j=1,2$.
	 	Let  $u^{(j)} \in L^\infty([0,T]; H^s(\T) $ to be solutions
	 	of gILW \eqref{gILW1} 
	 	with  initial data
	 	$u_0^{(j)} \in  H^s(\T) $.
	 	Then,
	 	the following holds
	 	\begin{align}
	 	\begin{aligned}
	 	\label{eq:mdf}
	 	\|u^{(1)}  -  u^{(2) }\|_{ M^{s-1, \dl}_{T} }  &\les 
	 	\|u^{(1)}   -  u^{(2)}   \|_{ L^{\infty}_{T} H_{x}^{s-1} } \\
	 	&\quad+  C\big( \|   u^{(1)}    \|_{ L^{\infty}_{T} H_{x}^{s} }  +    \| u^{(2)}    \|_{ L^{\infty}_{T} H_{x}^{s} } \big) 
	 	\| u^{(1)}-  u^{(2)}    \|_{L^{\infty}_{T} H_{x}^{s-1} }.
	 	\end{aligned}
	 	\end{align}

		\smallskip
	 	
	 	\item[(ii)]
Let $0<\dl< 1$. Then, $ v\in N^{s,\dl}_{\om,T}$ and we have
	 	\begin{equation*}
	 	\|v\|_{N^{s,\dl}_{\om,T}}  \les 
	 	\|v\|_{L^\infty_T H^s_\om} +C(\|v\|_{L^\infty_{T,x}}) \|v\|_{L^\infty_T H^{s}_\om}\;.
	 	\end{equation*}

	 	\noi
	 	Moreover, for $j=1,2$.
	 	Let  $v^{(j)} \in L^\infty([0,T]; H^s(\T) $ to be solutions
	 	of scaled gILW \eqref{ILW3} 
	 	with  initial data
	 	$v_0^{(j)} \in  H^s(\T) $.
	 	Then,
	 	the following holds
	 	\begin{align*}
	 	\|v^{(1)}  -  v^{(2) }\|_{ N^{s-1, \dl}_{T} }  &\les 
	 	\|v^{(1)}   -  v^{(2)}   \|_{ L^{\infty}_{T} H_{x}^{s-1} } \\
	 	&\quad+  C\big( \|   v^{(1)}    \|_{ L^{\infty}_{T} H_{x}^{s} }  +    \| v^{(2)}    \|_{ L^{\infty}_{T} H_{x}^{s} } \big) 
	 	\| v^{(1)}-  v^{(2)}    \|_{L^{\infty}_{T} H_{x}^{s-1} }.
	 	\end{align*}

	 \end{itemize}

	\noi
	Here, the implicit constants are independent of $\dl$.
	
\end{lemma}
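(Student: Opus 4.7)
The plan is to reduce both parts to the Duhamel representation of the equation, combined with the standard linear $X^{s,b}$-theory and a Moser-type product estimate on the frequency-envelope Sobolev spaces $H^s_\om$. The key observation, which will furnish the $\dl$-uniformity, is that the $X^{s-1,1,\dl}$ (respectively $Y^{s-1,1,\dl}$) norm controls the modulation $\jb{\tau - \pdeep_\dl(n)}$ (respectively $\jb{\tau - \pshallow_\dl(n)}$) only through the phase of the linear group, so in the Duhamel bound no pointwise information on $\pdeep_\dl$ or $\pshallow_\dl$ is used.

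Concretely, for part (i), given $u \in L^\infty_T H^s_\om$ solving \eqref{gILW1} with datum $u_0$, I would pick a smooth cutoff $\chi \in C_c^\infty((-2T, 2T))$ with $\chi \equiv 1$ on $[-T,T]$, set $F := \dx(u^k)$, and define the extension
\begin{equation*}
\tilde u(t) := \chi(t)\,\Sdt u_0 + \chi(t) \int_0^t \Sdt[t-t']\bigl(\chi(t')F(t')\bigr)\,dt',
\end{equation*}
which agrees with $u$ on $[0,T]$. The standard $X^{s,b}$ linear estimates (which use only unitarity of $\Sdt$ on $H^\sigma$ and hence are $\dl$-free) yield
\begin{equation*}
\|\tilde u\|_{X^{s-1,1,\dl}_\om} \lesssim \|u_0\|_{H^{s-1}_\om} + \|F\|_{L^2_T H^{s-1}_\om}.
\end{equation*}
Since $s > \tfrac12$ and the envelope $\{\om_N\}$ satisfies \eqref{eq_dydic} with $\kk \le 2$, the tame Moser inequality $\|u^k\|_{H^s_\om} \lesssim \|u\|_{L^\infty}^{k-1}\|u\|_{H^s_\om}$ (proved in the usual way via Bony paraproduct decomposition, the $H^s$-algebra property, and the slow variation of $\om_N$) converts $\|F\|_{L^2_T H^{s-1}_\om}$ into $T^{1/2}\,C(\|u\|_{L^\infty_{T,x}})\|u\|_{L^\infty_T H^s_\om}$, producing \eqref{eq:ms} after adding the trivial $L^\infty_T H^s_\om$ contribution. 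For the difference estimate \eqref{eq:mdf}, the difference $w := u^{(1)} - u^{(2)}$ satisfies $(\dt - \G \dx^2) w = \dx\bigl(w \cdot P(u^{(1)}, u^{(2)})\bigr)$ with $P(a,b) = \sum_{j=0}^{k-1} a^j b^{k-1-j}$; applying the same Duhamel argument at regularity $s-1$ together with the bilinear estimate $\|wP\|_{H^{s-1}} \lesssim \|w\|_{H^{s-1}}\bigl(\|u^{(1)}\|_{H^s}^{k-1} + \|u^{(2)}\|_{H^s}^{k-1}\bigr)$, valid for $s > \tfrac12$, delivers \eqref{eq:mdf}. Part (ii) is carried out identically, replacing the modulation weight $\jb{\tau - \pdeep_\dl(n)}$ with $\jb{\tau - \pshallow_\dl(n)}$ throughout; the Duhamel inequality has the same form and is again insensitive to the precise shape of $\pshallow_\dl$.

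The principal obstacle to watch is that every inequality really must be independent of $\dl$. The product estimates are purely spatial and involve only $H^s(\T)$, so they are trivially $\dl$-free. The Duhamel-type linear bound on $X^{s-1,1,\dl}$ (and its analogue on $Y^{s-1,1,\dl}$) is obtained from the Fourier-side identity $\jb{\tau - \pdeep_\dl(n)} \widehat{\tilde u}(\tau, n) \approx \widehat{\chi u_0} + \widehat{\chi F}$, which uses $\pdeep_\dl$ only as the phase of the free propagator; the usual estimates for $\chi\,\Sdt u_0$ and for the Duhamel integral in $X^{s-1,1,\dl}$ hold with constants depending only on $\chi$ and on $s$, not on $\dl$. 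A minor care is required for the frequency envelope: the Moser estimate in $H^s_\om$ requires the ratio $\om_M/\om_N$ to be polynomially controlled when $M \le \lambda N$, which is guaranteed by \eqref{eq_dydic} with $\kk \le 2$ as noted in the paragraph following Lemma \ref{LEM_ev}. Once these observations are in place, the two statements (i) and (ii) become essentially the same proof applied to two different modulation weights.
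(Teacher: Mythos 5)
Your proposal is correct and follows essentially the same route as the paper, which simply invokes the standard $X^{s,b}$-type analysis of \cite[Lemma 3.1]{MV15} and \cite[Lemma 4.7]{MT} and notes that it is independent of $\dl$; your Duhamel/extension argument with the $\dl$-free linear estimates and the tame product estimates in $H^s_\om$ (for $s>\tfrac12$) is precisely the omitted content. Your identification of why the bounds are uniform in $\dl$ — the modulation weight enters only through the phase of the free propagator — matches the paper's stated reasoning.
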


\begin{proof}

The proof is based on \cite[Lemma 3.1]{MV15} and \cite[Lemma 4.7]{MT}. 
We notice that the proof itself is independent of the depth parameter $\dl$ and only requires a standard $X^{s,b}$-type analysis.
Therefore, for the sake of conciseness, the details of the proof have been omitted.

%

\end{proof}

In what follows, we will establish our main uniform energy estimates. This argument is inspired by the improved energy method developed by Molinet-Vento \cite{MV15} and can also be found in the work by Molinet-Tanaka \cite[Proposition 4.8]{MT}. 
The idea behind this approach is rooted in the classical energy method and is well suited for our model. The dispersion term vanishes due to integration by parts, and as a result, we obtain the following.
Let $u \in C(\R; H^\infty(\T))$ be a smooth solution of gILW  \eqref{gILW1}.
Then, by the Fundamental Theorem of Calculus, we have
\begin{equation}
\|  u(t)  \|_{L^2}^2 - \|  u (0)  \|^2_{L^2}
=-2 \int_{\T}  \dx (  u^k  )  u  \, dx.
\label{FTC}
\end{equation}

\noi
The challenge then lies in studying the nonlinear interactions on the right-hand side of \eqref{FTC}. 
We note that equation \eqref{FTC} holds exactly for smooth solutions $v \in C(\mathbb{R}; H^\infty(\mathbb{T}))$ of the scaled gILW equation \eqref{ILW3}. Additionally, this type of argument works well on $\mathbb{R}$.

\begin{proposition}[Uniform energy estimate]
	\label{PRO:EN1}

Let $k \geq 2$.
Consider $u_0, v_0 \in H^s(\mathbb{T})$ for $s \geq \frac{3}{4}$ and 
$u,v \in L^\infty([0,T]; H^s(\mathbb{T}))$
 to be solutions of  gILW  \eqref{gILW1} and  scaled gILW  \eqref{ILW3}, respectively, on $[0,T]$ for $0<T<1$.
%
Then,  for
$\{\omega_N\}$ be a dyadic sequence that satisfies \eqref{eq_dydic} with $\kk\geq 1$,
the following statements hold.

	\begin{itemize}
	\item[(i)]	
Let $2\leq \dl\leq \infty$. Then, we have 
	\begin{equation*}
	\|u\|_{L_T^\infty H_\om^s}^2
	\leq \|u_0\|_{H_\om^s}^2 + T^{\frac14} C(\|u\|_{M_{T}^{\frac34,\dl}})
	\|u\|_{M_{\om,T}^{s,\dl}}
	\|u\|_{L_T^\infty H_\om^s}.
	\end{equation*}
	
	\smallskip
	
	\item[(ii)]
Let $ 0<\dl< 1$. Then,
	we have
	\begin{equation*}
	\|v\|_{L_T^\infty H_\om^s}^2
	\le \|v_0\|_{H_\om^s}^2 + T^{\frac14} C(\|v\|_{N_{T}^{\frac34,\dl}})
	\|v\|_{N_{\om,T}^{s,\dl}}
	\|v\|_{L_T^\infty H_\om^s}.
	\end{equation*}
	\end{itemize}

When we only consider  $k=2$,
the statements {\rm (i)} and {\rm (ii)} hold true for $s > \frac 12$ and $\om\equiv 1$.
	
\end{proposition}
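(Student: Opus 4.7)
\medskip

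\noindent
\textbf{Proof proposal.}
The plan is to mimic the improved energy method of Molinet--Vento \cite{MV15} and Molinet--Tanaka \cite{MT}, but to carefully check that every step can be carried out with constants independent of $\dl$. I would focus on statement (i); statement (ii) will follow by the exact same scheme, using the shallow-water counterparts of the Strichartz and resonance estimates. By the usual approximation argument (compactness in $H^\sigma$ for $\sigma<s$, together with persistence of regularity from Lemma \ref{LM:LWP}), I may assume $u\in C([0,T];H^\infty(\T))$ is a smooth solution. Apply the projector $P_N$ to \eqref{gILW1}, multiply by $\om_N^2(1\vee N)^{2s} P_N u$, and integrate over $\T$. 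The dispersive term $\int (\G\dx^2 P_N u)\, P_N u\, dx$ vanishes by skew-adjointness of $\G\dx^2$ (using that $\coth(\dl n)-\frac{1}{\dl n}$ is odd in $n$), leaving
\begin{align*}
\tfrac{1}{2}\tfrac{d}{dt}\|P_N u\|_{L^2}^2 = -\int_{\T} P_N\dx(u^k)\cdot P_N u\, dx.
\end{align*}
Integrating in $t\in[0,T]$, square-summing in $N$ with weights $\om_N^2(1\vee N)^{2s}$, and taking the supremum in $t$ reduces the proof to the multilinear estimate
\begin{align*}
\sum_{N}\om_N^2(1\vee N)^{2s}\Big|\int_0^T\!\!\int_{\T}P_N\dx(u^k)\cdot P_N u\, dx\, dt\Big|\les T^{\frac14}C(\|u\|_{M^{3/4,\dl}_T})\|u\|_{M^{s,\dl}_{\om,T}}\|u\|_{L^\infty_T H^s_\om}.
\end{align*}

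The core multilinear estimate will be handled by a Littlewood--Paley decomposition $u=\sum_{N_j}P_{N_j}u$ in each factor, reducing to sums of the form $\int_0^T\!\int_{\T}\prod_{j=1}^{k+1}P_{N_j}u\, dx\, dt$ with $N_1\sim N_2\ges N_3\ges\dots\ges N_{k+1}$ (by symmetry and the presence of the derivative on a high-frequency factor). In the \emph{non-resonant regime}, where the largest frequency dominates the remaining ones in the sense of Lemmas \ref{lem_res1} and \ref{lem_res2}, the resonance function satisfies $|\Od_{k+1}(\wt n)|\ges N_3 N_1$ uniformly for $\dl\ge2$. Passing to space-time Fourier variables and using that one of the modulations must carry at least $|\Od_{k+1}|$, this gain can be transferred to the $X^{s-1,1,\dl}$ factor, which absorbs the derivative loss $N_1$ coming from $\dx$. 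The remaining factors are estimated by the uniform Strichartz bounds of Proposition \ref{PRO:stri} and Lemma \ref{lem_stri1}, which produce the $T^{1/4}$ factor (coming from two $L^4_TL^4_x$ Strichartz norms, each of which costs $T^{1/8}$). In the \emph{resonant regime}, where frequencies are comparable enough that no resonance lower bound applies, there are only finitely many configurations (bounded in terms of $n_0$), and each is controlled by Hölder in time plus $L^\infty_x\embeds H^s$ with $s>\frac12$, which explains the appearance of the factor $C(\|u\|_{M^{3/4,\dl}_T})$ absorbing the lower-regularity factors via Sobolev embedding.

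Summation over $N$ is then carried out using the frequency envelope: for $M\le \lambda N$, Lemma \ref{LEM_ev} gives $\om_M/\om_N\les \lambda$ after choosing $\kk$ close to $1$, which allows the $\om_N$ weights on the high-frequency factors to be converted into $\om_{N_j}$ weights on every Littlewood--Paley piece (so that two factors end up carrying the $M^{s,\dl}_{\om,T}\cdot L^\infty_T H^s_\om$ norm and the rest carry just $H^{3/4}$ norms). Crucially, every constant produced along the way comes either from Proposition \ref{PRO:stri}, Lemma \ref{lem_stri1}, Lemmas \ref{LEM:p1d}--\ref{LEM:GW31}, or Lemmas \ref{lem_res1}--\ref{lem_res2}, all of which were proven uniformly in $\dl\in[2,\infty]$. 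In the reduced case $k=2$, only a single nonlinear factor $u$ appears beyond the $\dx(u)u$ pairing, so no Strichartz input is needed on the lowest-regularity factor and the analysis closes for $s>\frac12$ with $\om\equiv 1$; this is the endpoint case of the Molinet--Vento short-time estimate adapted to ILW.

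The main obstacle I anticipate is not the combinatorics of the multilinear expansion but rather ensuring that the constants in the resonance analysis remain uniform in $\dl$ down to the borderline $\dl=2$ (for the deep-water case) and uniformly up to $\dl\to 0$ (for the shallow-water case, where $\pshallow_\dl$ degenerates); this is precisely what the careful case analysis in Lemma \ref{lem_res2} buys us, and the role of that lemma is to allow the shallow-water version of the same energy scheme to close without any $\dl$-dependent blow-up in the high-frequency regime $|n|\ges\frac{1}{\dl}$.
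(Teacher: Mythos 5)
Your high-level skeleton (energy identity, Littlewood--Paley decomposition, a resonant/non-resonant dichotomy resolved by the uniform Strichartz bounds and the uniform resonance lemmas) is the same as the paper's, which follows \cite{MV15, MT}. But two essential mechanisms are missing or misstated, and as written the scheme does not close. First, in the non-resonant regime you place the full derivative on a high-frequency factor and claim the modulation gain absorbs the loss $N_1$. The paper instead symmetrizes the symbol, writing $A(n_1,\dots,n_{k+2})=\sum_j\phi_N^2(n_j)n_j$ and isolating $A_1(n_1,n_2)=\phi_N^2(n_1)n_1+\phi_N^2(n_2)n_2$, which on the convolution hyperplane is of size $O(N_3)$, not $N_1$. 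With your accounting the weight is $N_1^{2s}$, the derivative costs $N_1$, the modulation gain from $|\Od_{k+2}|\ges N_3N_1$ contributes $(N_3N_1)^{-1}$, and placing one high factor in $X^{s-1,1,\dl}$ rather than $L^\infty_TH^s$ costs an extra $N_1^{1-s}$ against the $N_1^{-s}$ you need; the net balance is $N_1/N_3$, which is unbounded when $N_3\ll N_1$. Only after the symmetrization replaces the derivative loss $N_1$ by $N_3$ does the gain $N_3N_1$ close the estimate (and, relatedly, the $T^{\frac14}$ in this regime comes from the time-localization $\1_{t,R}^{\textrm{high}}/\1_{t,R}^{\textrm{low}}$ with $R=N_1^{1/3}N_3^{4/3}$, not from two Strichartz norms).

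Second, your description of the resonant regime is not correct. The configurations where no resonance lower bound is available are not ``finitely many, bounded in terms of $n_0$'': the hard case is $N\les N_1\les k^2 N_4$, i.e.\ the four largest frequencies are all comparable and arbitrarily large (Case A of the paper's proof). There the total weight $N_4^{2s+1}$ must be shared among four factors, each estimated in the uniform $L^4_{T,x}$ Strichartz norm of Proposition \ref{PRO:stri} with $D_x^{s-\frac18}$ on two of them and $D_x^{\frac18+\frac12}$ on the other two; the summability requirement $\frac18+\frac12\le s-\frac18$ is precisely where the threshold $s\ge\frac34$ enters (for $k=2$ this quadruple-comparable case is absent, which is why $s>\frac12$ suffices there). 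H\"older in time together with the embedding $H^s(\T)\embeds L^\infty(\T)$ loses a full derivative in this case and cannot close. Your remarks on the uniformity in $\dl$ of all the input lemmas, and on the frequency envelope $\om_{M}/\om_{N}\les\lambda$, are consistent with the paper.
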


\begin{proof}

The proof   is based on \cite[Proposition 4.8]{MT} for general $k\geq 2$ and regularity is needed for $s\geq \frac34$.
When we consider only $k=2$, the proof is based  on  \cite[Proposition 3.4]{MV15}. 
Without loss of generality, we provide a succinct outline of the case involving the general nonlinearity $\dx (u^k)$ for $k\geq 2$.
Throughout the following discussion, we will identify the instances where we will need to replace our previously obtained uniform estimates.

 Taking the $L^2$-scalar product of the resulting equation with $P_N u$, 
	multiplying by $ \omega_N^2 \langle N\rangle^{2s} $ and integrating
	over $[0,t]$ with $0<t<T$,
	we yield
	\begin{equation*}
	\om_N^2 \jb{N}^{2s} \| P_N u(t)\|_{L^2}^2
	=   \om_N^2 \jb{N}^{2s} \|u_0\|_{L^2}^2
	-2    \om_N^2 \jb{N}^{2s}
	\int_0^t\int_\T \dx P_N(u^k) P_N  u \, dxdt'.
	\end{equation*}

	\noi
	We use integration by parts,
	apply Bernstein inequalities, and sum over in $N$,
	we obtain
	\begin{align*}
	\begin{aligned}
	\|u(t)\|_{H_\om^s}^2
	&  =\sum_N \omega_N^2 ( 1 \vee N)^{2s}  
	\bigg(
	\| P_N u_0\|^2_{L^2_x} - 2\int^t_0 \int_\T
	P_N \dx(u^k ) P_N u \,  dx dt'
	\bigg)\\
	&\le \|u_0\|_{H_\om^s}^2
	+2\sum_{N\ge 1}  \om_N^2 N^{2s} \bigg|\int_0^t\int_\T u^k P_N^2 \dx u \,  dxdt'\bigg|\\
	&\leq   \|u_0\|_{H_\om^s}^2+	2I^t_{k},
	\end{aligned}
	\end{align*}

	\noi
	where $I_{k}^t$ is defined by
	\begin{align*}
	I^t_{k} := \sum_{N\ge 1} \om_N^2 N^{2s} \bigg|\int_0^t\int_\T u^k P_N^2\dx u \, dxdt'\bigg|.	
	\end{align*}
 Therefore, 
	we shall prove that for any $ k\ge 1$, the following holds
	\begin{equation}
	\label{eq:emain}
	I_{k+1}^t \leq T^{\frac14} C^k
	(\|u\|_{X_{\om,T}^{s-1,1,\dl}}+\|u\|_{L_T^\infty H_\om^s})
	\|u\|_{L_T^\infty H_\om^s},
	\end{equation}
	
\noi
where $C$  depends only on $\|u\|_{\Mssss}$.
One can easily check that \eqref{eq:emain} holds when we sum over  $N\les 1 $. 
See for example \cite[eq.(4.14)]{MT}.
Therefore, it is enough to consider \eqref{eq:emain} with  $N \gg 1  $.
	First, we define the following symbols:
	\begin{align*} 
	A(n_1,\dots,n_{k+2})
	&:=\sum_{j=1}^{k+2}\phi_N^2(n_j)n_j,\\
	A_1(n_1,n_2)
	&:=\phi_N^2(n_1)n_1+\phi_N^2(n_2)n_2,\\
	A_2(n_4,\dots,n_{k+2})
	&:=\sum_{j=4}^{k+2}\phi_N^2(n_j)n_j.
	\end{align*}

	\noi
	Here, $\phi_N$ is defined in Section \ref{SEC:nota}.
	It is clear that 
	\[
	A(n_1,\dots,n_{k+2})=
	A_1(n_1,n_2)+
	\phi_N^2(n_3)n_3+
	A_2(n_4,\dots,n_{k+2}).
	\]

	\noi
	Moreover,
	we see from the symmetry that
	\begin{equation*}
	\begin{aligned}
	\int_\T u^{k+1}P_N^2\dx u dx
	&=\frac{i}{k+2}\sum_{n_1+\cdots+n_{k+2}=0}A(n_1,\dots,n_{k+2})\prod_{j=1}^{k+2}\ft{u}(n_j)\\
	&=\frac{i}{k+2}\sum_{N_1,\dots,N_{k+2}}\sum_{n_1+\cdots+n_{k+2}=0}A(n_1,\dots,n_{k+2})\prod_{j=1}^{k+2}\phi_{N_j}(n_j)\ft{u}(n_j).
	\end{aligned}
	\end{equation*}

	\noi
	By symmetry, we can assume  that 
	\begin{align*}
	\begin{cases}
	N_1\ge  N_2 \ge N_3, \, \textup{ if } k=1;\\
	N_1\ge N_2 \ge N_3\ge N_4, \,  \textup{ if } k=2 ; \\
	N_1\ge N_2\ge N_3\ge N_4\ge N_5= 
	\displaystyle \max_{j\ge5}N_j, \, \textup{ if } k\ge 3.
	\end{cases}
	\end{align*}
	We notice that
	the cost of this choice is a constant factor less than $(k+2)^4 $.
We also observe that frequency projection  $ P_N$
	ensures that there is no contribution 
	\footnote{
		$P_NP_{N1}$=0 if $N_1\leq \frac N4$, since the $\{ \textup{supp}(P_N)\cap \textup{supp}(P_{N_1})\}=\emptyset$.
	}
for any $ N_1\le  N/4 $. Hence, we can assume that $ N_1 \ge \frac N4 $ and that $ N_2\ges \frac{N_1}{k}  $ with $ N_2\ge 1$.

In the following, we verify the $A_1$ case to illustrate where the uniform estimates play in their roles. Then, the rest of details will follow
 \cite[Proposition 4.8]{MT} by replacing all the relevant estimates.

In $A_1$ contribution,
we observe that the frequency projector in $ A_1 $ ensures that either $ N_1\sim N$ or $ N_2\sim N $,
and in both cases $ N \ges N_3$.  
Moreover, we can further assume that $ N_3\ge 1 $,
otherwise   $ A_1$ contribution will be cancelled by integration by parts.
And then,
we divide $A_1$   contribution into three cases:  
	\begin{align*}
	\textup{(A)}\, N_2\les    N_3\les k N_4 ,
	\quad\quad\quad
	\textup{(B)} \, N_3\gg k N_4  \textup{ or } k=1,
	\quad\quad\quad
	\textup{(C)}\,N_2\gg    N_3 .
	\end{align*}
Let us define the following notation
	\begin{align*}
	J_t
	:=\sum_{N\gg 1}\sum_{N_1,\dots,N_{k+2}}\om_N^2 N^{2s}
	\bigg|\int_0^t\int_\T\Pi(P_{N_1}u,P_{N_2}u)\prod_{j=3}^{k+2}P_{N_j}udxdt'\bigg|,
	\end{align*}

	\noi
	where $\Pi(f,g)$ is defined
	to be
	\begin{equation*}
	\Pi(u,v):=v \dx P_N^2u +u \dx P_N^2v .
	\end{equation*}
		Note that $ N\gg 1 $ ensures that  $N_1\gg1$.

	\smallskip
	\noindent
	\textbf{Case A: $N_2\les  N_3\les k N_4$.}
	\smallskip

In this case, we have $N \les N_1\les k  N_2 \les  k N_3 \les k^2 N_4  $.
And	then, 
the main difference in estimating the following 
from the method presented in \cite{MT} lies in the fact that our uniform linear estimate, as stated in Proposition \ref{PRO:stri}, will result in the last inequality.
Hence,
by combining H\"older's, Bernstein's and Young's inequality, we  show that
	\begin{align*}
	J_t
	&\les_k \sum_{N_1,\dots,N_{k+2}\atop N_1\les k^2 N_4, N_1\ge N_4, N_2\ge N_4,N_3\ge N_4} \om_{N_1}^2 N_4^{2s+1}
	\prod_{j=1}^4\|P_{N_j}u\|_{L_{T,x}^4}
	\prod_{j=5}^{k+2}\|P_{N_j}u\|_{L_{T,x}^\infty}\\
	&\les_k \sum_{N_1\ge N_4,N_2\ge N_4,N_3\ge N_4}
	\frac{\om_{N_1}}{\om_{N_2}} \om_{N_1} \om_{N_2}  \Bigl(\frac{N_4}{N_1}\Bigr)^{s-\frac{1}{8}} \Bigl(\frac{N_4}{N_2}\Bigr)^{s-\frac{1}{8}} \Bigl(\frac{N_4}{N_3}\Bigr)^{\frac{1}{8}+\frac12}\\
	&\quad \times \prod_{j=1}^2
	\|D_x^{s-\frac{1}{8}} P_{N_j}u\|_{L_{T,x}^4}
	\prod_{l=3}^4
	\|D_x^{\frac{1}{8}+\frac12} P_{N_l}u\|_{L_{T,x}^4}\\
	&\les_k \Bigl( \sum_{K}\om_K^4\|D_x^{s-\frac{1}{8}}P_K u\|_{L_{T,x}^4}^4\Bigr)^{\frac12}
	\Bigl( \sum_{K}\|D_x^{\frac{1}{8}+\frac12}P_K u\|_{L_{T,x}^4}^4\Bigr)^{\frac12}\\
	&\les_k  T C \|u\|_{L_T^\infty H_\om^{s}}^2,
	\end{align*}

	\noi
One may notice that Lemma \ref{LEM_ev} ($\kk \le 2$
	and $ N_1\les k N_2 $) implies   $  \frac{\om_{N_1}}{\om_{N_2}} \les k $.
	Moreover,
	it
	is not difficult to see that the last inequality holds when
	$
	s\geq \frac34
	.$

	\smallskip
	\noindent
	\textbf{Case B: $ N_3\gg k  N_4$ or $ k=1$.} 
	\smallskip

For technical reasons we will take the extensions $\wt{u}=\rho_T(u)$ of $u$, which is defined in \cite[Lemma 2.1]{MT}.
Moreover, we define the following functional:
	\begin{equation*}
	J_\infty
	:=\sum_{N\gg 1}\sum_{N_1,\dots,N_{k+2}}\om_N^2 N^{2s}
	\bigg|\int_\R\int_\T\Pi(u_1,u_2)
	\prod_{j=3}^{k+2}u_jdxdt'\bigg|.
	\end{equation*}

	\noi
	By setting $R=N_1^{\frac13}N_3^{\frac43}$, and then we split $J_t$ into
	\begin{align}
	\label{eq:split}
	\begin{aligned}
	J_t
	&\le J_\infty  (P_{N_1}\1_{t,R}^{\textrm{high}}\wt{u},
	P_{N_2}\1_t\wt{u},P_{N_3}\wt{u},\cdots, P_{N_{k+2}}\wt{u})\\
	&\quad+J_\infty   (P_{N_1}\1_{t,R}^{\textrm{low}}\wt{u},
	P_{N_2}\1_{t,R}^{\textrm{high}}\wt{u},
	P_{N_3}\wt{u},\cdots,
	P_{N_{k+2}}\wt{u})\\
	&\quad+J_\infty (P_{N_1}\1_{t,R}^{\textrm{low}}\wt{u},
	P_{N_2}\1_{t,R}^{\textrm{low}}\wt{u},
	P_{N_3}\wt{u},\cdots,
	P_{N_{k+2}}\wt{u})\\
	& =:J_{\infty,1} +J_{\infty,2} +J_{\infty,3}.
\end{aligned}
\end{align}

 We start with $J_{\infty,1}$, and
	recall that $N\sim N_1 \sim N_2$.  
From  \cite[Lemma 3.6]{MPV19}, we have
	\begin{align*}
	\|\1_{t,R}^{{\rm
			high}}\|_{L^1}\les T^{\frac14}N_1^{-\frac{1}{4} }N_3^{-1},
	\end{align*}
	which implies
		\begin{align}
	\begin{aligned}
	J_{\infty,1}
	&\les\sum_{N_1,\dots,N_{k+2}} \om_{N_1}^2
	N_1^{2s}N_3\|\1_{t,R}^{\textrm{high}}\|_{L_t^1}
	\|P_{N_1}\wt{u}\|_{L_t^\infty L_x^2}\|P_{N_2}\wt{u}\|_{L_t^\infty L_x^2}
	\prod_{j=3}^{k+2}\|P_{N_j}\wt{u}\|_{L_{t,x}^\infty}\\
	&\les
	T^{\frac14}\|\wt{u}\|_{L_t^\infty H_x^{s'}}^k\|\wt{u}\|_{L_t^\infty H_\om^{s}}^2
	\sum_{N_1} N_1^{-\frac14}
	\les T^{\frac14}C^k \|u\|_{L_T^\infty H_\om^{s}}^2,
	\end{aligned}
	\label{J21}
	\end{align}

	\noi
	for some $s'>\frac12$.
	By the strategy as for \eqref{J21},  We can estimate 
	$J_{\infty,2}^{(2)}$.

To estimate term  $J_{\infty,3} $.
It is imperative to carry out a further decomposition based on the modulation functions. 
In this regard,  the  crucial difference from \cite{MT} is that we need Lemma \ref{lem_res1} such that the following
uniform lower bound holds:
	\begin{align*}
	|\Od_{k+2} (\wt n)| \ges N_3N_1 \gg R.
		\end{align*}

	\noi
	Then, by defining $L:=N_3 N_1 $, 
	we further decompose $J_{\infty,3} $ and arrive the following
	\begin{align}
	\label{eq:split3}
	\begin{aligned}
	J_{\infty,3} 
	&\le J_\infty   (P_{N_1}Q_{\ges L}(\1_{t,R}^{\textrm{low}}\wt{u}),
	P_{N_2}\1_{t,R}^{\textrm{low}}\wt{u},
	P_{N_3}\wt{u},\cdots,
	P_{N_{k+2}}\wt{u})\\
	&\quad +J_\infty (P_{N_1}Q_{\ll L}(\1_{t,R}^{\textrm{low}}\wt{u}),
	P_{N_2}Q_{\ges L}(\1_{t,R}^{\textrm{low}}\wt{u}),
	P_{N_3}\wt{u},\cdots,
	P_{N_{k+2}}\wt{u})\\
	&\quad +J_\infty  (P_{N_1}Q_{\ll L}(\1_{t,R}^{\textrm{low}}\wt{u}),
	P_{N_2}Q_{\ll L}(\1_{t,R}^{\textrm{low}}\wt{u}),
	P_{N_3}Q_{\ges L}\wt{u},\cdots,
	P_{N_{k+2}}\wt{u})\\
	&\quad+\cdots
	+J_\infty (P_{N_1}Q_{\ll L}(\1_{t,R}^{\textrm{low}}\wt{u}),
	P_{N_2}Q_{\ll L}(\1_{t,R}^{\textrm{low}}\wt{u}),
	P_{N_3}Q_{\ll L}\wt{u},\cdots,
	P_{N_{k+2}}Q_{\ges L}\wt{u})\\
	&=:J_{\infty,3}^{\rm (1)}   +\cdots+J_{\infty,3}^{\rm (k+2)}  .
		 \end{aligned}
	\end{align}

\noi
Hence,	it suffices to estimate each $J_{\infty,3}^{\rm (j)} $ for $j=1,...,k+2$.
In order to control $J_{\infty,3}^{(1)}$, we first observe
 \cite[Lemma 3.6]{MPV19} implies
	$\|  \1_{t,R}^{\textrm{high}}\|_{L^2_t}\leq R^{-\frac12}  $,  and also we have that
	\begin{align}
	\label{eqlow0}
	\begin{aligned}
	\|P_{N_2}\1_{t,R}^{\textrm{low}}\wt{u}\|_{L_{t,x}^2}
	&\le\|P_{N_2}\1_{t}\wt{u}\|_{L_{t,x}^2}
	+\|P_{N_2}\1_{t,R}^{\textrm{high}}\wt{u}\|_{L_{t,x}^2}\\
	&\les \|P_{N_2}\1_{t}\wt{u}\|_{L_{t,x}^2}
	+T^{\frac14}R^{-\frac14}\|P_{N_2}\wt{u}\|_{L_{t}^\infty L_x^2}.
	\end{aligned}
	\end{align}

\noi
Thus,
by using  
\cite[Lemma 2.4]{MT},
 \cite[Lemma 3.7]{MPV19}, H\"older's inequality,  and \eqref{eqlow0}, we obtain
 the following
	\begin{align}
		\label{eq:j1}
		\begin{aligned}
	J_{\infty,3}^{\rm (1)}
	&\les \sum_{N_1,\dots,N_{k+2}}\om_{N_1}^2 N_1^{2s}N_3
	\|P_{N_1}Q_{\ges L} (\1_{t,R}^{{  \rm low}}\wt{u})\|_{L_{t,x}^2}
	\|P_{N_2}\1_{t,R}^{{ \rm low}}\wt{u}\|_{L_{t,x}^2}
	\prod_{j=3}^{k+2}\|P_{N_j}\wt{u}\|_{L_{t,x}^\infty}\\
	&\les_k \|\wt{u}\|_{L_t^\infty H_x^{s'}}^k
	\sum_{N_1 \ges 1}\om_{N_1}^2 N_1^{2s-1}
	\|P_{N_1} \wt{u}\|_{X^{0,1,\dl}}
	\|P_{N_1} \1_t \wt{u}\|_{L_{t,x}^2}\\
	&\quad+T^{\frac14}\|\wt{u}\|_{L_t^\infty H_x^{s'}}^{k-1}
	\sum_{N_1   \ges  N_3}\om_{N_1}^2 N_1^{2s- \frac{13}{12}  }N_3^{- \frac 13}\|P_{N_1} \wt{u}\|_{X^{0,1,\dl}}
	\|P_{N_1} \wt{u}\|_{L_{t}^\infty L_x^2}
	\|P_{N_3}\wt{u}\|_{L_{t,x}^\infty}\\
	&\les T^{\frac14}\|\wt{u}\|_{L_t^\infty H_x^{s'}}^k
	\|\wt{u}\|_{L_t^\infty H_\om^s}\|\wt{u}\|_{X_\om^{s-1,1,\dl}}
	\les_k T^{\frac14}
	\|u\|_{L_T^\infty H_\om^s}\|u\|_{M_{\om,T}^{s,\dl}}.
	\end{aligned}
	\end{align}

	\noi
Moreover,we can immediately estimate $J_{\infty,3}^{(2)}$ by the same approach as \eqref{eq:j1}.

	Next, we consider the contribution $J_{\infty,3}^{(3)}$.
	By using \cite[Lemma 3.5]{MPV19}, \cite[Lemma 2.4]{MT}, H\"older's inequality  yield
	\begin{align}
	\label{eq:j3}
	\begin{aligned}
      J_{\infty,3}^{(3)}
	&\les\sum_{N_1,\dots,N_{k+2}}\om_{N_1}^2  N_1^{2s}N_3
	\|P_{N_1}Q_{\ll L}
	(\1_{t,R}^{ {\rm low}} \wt{u})\|_{L_{t,x}^2}
	\|P_{N_2}Q_{\ll L}(\1_{t,R}^{ {\rm low}}\wt{u})\|_{L_{t}^\infty L_x^2}\\
	&\quad\times\|P_{N_3}Q_{\ges L}
	\wt{u}\|_{L_{t}^2 L_x^\infty}
	\prod_{j=4}^{k+2} \|P_{N_j}\wt{u}\|_{L_{t,x}^\infty}\\
	&\les_k T^{\frac12}  \| \wt{u} \|_{L_{t}^\infty H_x^{s'}}^{k-1}
	\sum_{N_1\ges N_3\geq 1} \om_{N_1}^2
	N_1^{2s-1} \|P_{N_1} \wt{u}\|_{L_{t}^\infty L_x^2}^2
	\|D_x^{\frac12} P_{N_3}\wt{u}\|_{X^{0,1,\dl}}\\
	&\les T^{\frac12}\|\wt{u}\|_{L_{t}^\infty H_x^{s'}}^{k-1}
	\sum_{N_1\ges N_3\geq 1}
	N_1^{-\frac{1}{8}}N_3^{-\frac{1}{8}}
	\om_{N_1}^2 N_1^{2s}\|P_{N_1}\wt{u}\|_{L_{t}^\infty L_x^2}^2
	\|P_{N_3}\wt{u}\|_{X^{-\frac14,1,\dl}}\\
	&\les T^{\frac12}\|\wt{u}\|_{L_{t}^\infty H_x^{s'}}^{k-1}
	\|\wt{u}\|_{X^{-\frac14,1,\dl}}\|\wt{u}\|_{L_t^\infty H_\om^s}^2
	\les   T^{\frac12}\|u\|_{L_T^\infty H_\om^s}^2.
	\end{aligned}
	\end{align}

	\noi
Moreover,
by a similar argument as in $J_{\infty,3}^{(3)} $\eqref{eq:j3},
	we get 
$
	J_{\infty,3}^{(j)}    \les T^{\frac12}\|u\|_{L_T^\infty H_\om^s}^2,
$
for all $4\leq j\leq k+2$.

	\smallskip
	\noi
	\textbf{Case C: $N_1\sim N_2\gg  N_3$.}
	\smallskip

	In this case, we need to compare the size $|n_3+n_4| $ and $ k |n_5|$.
	By symmetry we can assume $|n_5|\geq |n_j|$,   where $ n_j $ is the $j$-th largest frequency. 
	Therefore, we consider the following two cases:
	\[
	|n_3+n_4|\gg k|n_5| 
	\quad\quad\quad \quad\quad\quad
	|n_3+n_4|\les k|n_5|.
	\]

	\noi
	If $|n_3+n_4| \gg k |n_5| $, we have a good non-resonance interaction (see Lemma \ref{lem_res2}). Then, we can finish the proof in a similarly way of Case B.
	Otherwise, we are in the ``almost" resonance situation.
	In particular, we can share the lost derivative between three functions, $P_{N_j} u$, for $j=3,4,5$, and then, we finish the proof by using
our uniform estimate  Proposition \ref{PRO:stri}, which is similar to Case A.

\end{proof}

\begin{remark}
\rm

The above proof showed two crucial differences in our scenario, namely the uniform linear estimates provided by Proposition \ref{PRO:stri} 
and the uniform lower bounds on the resonance functions given in Lemmas \ref{lem_res1} and \ref{lem_res2}. 
When $k=2$, Case B is coincide with the argument in \cite{MV15} for $s>\frac 12$.
 In particular, we see in the proof of \cite[Lemma 3.2]{MV15},
the first decomposition corresponding to \eqref{eq:split} is  \cite[eq.\,(3-5)]{MV15} and then the second decomposition 
corresponding to \eqref{eq:split3} is \cite[eq.\,(3-7)]{MV15}. By using Lemmas \ref{lem_res1} we can obtain the same estimate.

\end{remark}

\subsection{Uniform difference estimates}

In what follows, we will establish the difference estimate at the regularity level $s-1$. This is necessary because the symmetrization argument that we used in the proof of Proposition \ref{PRO:EN1} is less effective when dealing with the difference between two solutions. Similar arguments can be found in \cite{MV15, MT}.
Let us consider two solutions $(u^{(1)}, u^{(2)}) \in (M_T^{s,\dl})^2$ of the gILW equation \eqref{gILW1} with initial data $(u_0^{(1)}, u_0^{(2)}) \in (H^s(\T))^2$. We denote the difference between these solutions as $w = u^{(1)} - u^{(2)}$, which satisfies the equation
\begin{equation}
\label{eq_w}
\dt w - \mathcal{G}_{\dl} \dx^2 w = \dx( (u^{(1)})^k - (u^{(2)})^k  ).
\end{equation}
The goal is to establish analogous estimates to those in Proposition \ref{PRO:EN1} for the equation \eqref{eq_w}.

\begin{proposition}
	\label{PRO:EN2}

Let $k \geq 2$. For $j = 1,2$, 
let $u_0^{(j)}, v_0^{(j)} \in H^s(\mathbb{T})$ for $s \geq \frac{3}{4}$ and 
 let $u^{(j)} \in M_T^{s,\delta}$
 to be solutions of  gILW  \eqref{gILW1},
 $v^{(j)} \in N_T^{s,\delta}$  to be solutions of 
  scaled gILW  \eqref{ILW3}, respectively, on $[0,T]$ for $0<T<1$.
 Then,
the following hold.

	\begin{itemize}
		\item[(i)]
Let $2\leq \dl\leq \infty$. Then, we have 
		\begin{align*}
		\begin{aligned}
		\| u^{(1)}- u^{(2)}  \|_{L_T^\infty H_x^{s-1}}^2
		&\le \|u^{(1)}_0 - u^{(2)}_0\|_{H^{s-1}}^2 +T^{\frac14}
		C \big(  \| u^{(1)}  \|_{M^{s,\dl}_T}   ,  \|u^{(2)}   \|_{M^{s,\dl}_T}   \big)\\
		&\quad \times
		\| u^{(1)}- u^{(2)}   \|_{M^{s-1,\dl}_T}
		\|  u^{(1)}- u^{(2)}   \|_{L_T^\infty H_x^{s-1}}.
		\end{aligned}
		\end{align*}

\item[(ii)]
Let $ 0<\dl< 1$. Then,
	we have
		\begin{align*}
		\begin{aligned}
		\| v^{(1)}- v^{(2)}    \|_{L_T^\infty H_x^{s-1}}^2
		&\le \|v^{(1)}_0 - v^{(2)}_0\|_{H^{s-1}}^2 +T^{\frac14}
		C \big(  \| v^{(1)}  \|_{N^{s,\dl}_T}     ,  \|     v^{(2)}   \|_{N^{s,\dl}_T}  \big)\\
		&\quad \times
		\|   v^{(1)}- v^{(2)}    \|_{N^{s-1,\dl}_T}
		\|   v^{(1)}- v^{(2)}   \|_{L_T^\infty H_x^{s-1}}.
		\end{aligned}
		\end{align*}

	\end{itemize}

When we only consider  $k=2$,
the statements {\rm (i)} and {\rm (ii)} hold true for $s > \frac 12$.

\end{proposition}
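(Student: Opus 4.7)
The plan is to adapt the proof of Proposition \ref{PRO:EN1} to the difference equation \eqref{eq_w}. First, I would factor the nonlinearity as
\[
(u^{(1)})^k - (u^{(2)})^k = w \cdot Q(u^{(1)},u^{(2)}), \qquad Q := \sum_{j=0}^{k-1} (u^{(1)})^{j} (u^{(2)})^{k-1-j},
\]
so that $Q$ is a polynomial of degree $k-1$ in two solutions each at regularity $H^s$. Applying $P_N$ to \eqref{eq_w}, pairing with $P_N w$, integrating from $0$ to $t$, using integration by parts in $x$ to shift the derivative off $P_N w$, then multiplying by $\jb{N}^{2(s-1)}$ and summing over dyadic $N\ges 1$, the estimate reduces to controlling
\[
J_k^t := \sum_{N\ges 1} \jb{N}^{2(s-1)} \bigg| \int_0^t \int_\T w \, Q \, P_N^2 \dx w \, dx \, dt' \bigg|
\]
by the right-hand side of the claimed inequality (with the weight $\om_N\equiv 1$ sufficing, since only the $H^{s-1}$ scale on $w$ enters).

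The multilinear integral has two $w$-factors at regularity $s-1$ and $k$ factors drawn from $\{u^{(1)},u^{(2)}\}$ at regularity $s$. I would Littlewood--Paley decompose each of the $k+2$ factors and run the same three-regime split as in Proposition \ref{PRO:EN1}: Case A where the four highest frequencies are comparable, Case B where $N_3 \gg k N_4$ (or $k=1$) giving a spectral gap, and Case C where $N_1 \sim N_2 \gg N_3$. In Case A, the uniform $L^4$-Strichartz estimate of Proposition \ref{PRO:stri} combined with H\"older and Bernstein closes the bound, placing both $w$-factors in $L^4_{T,x}$ at derivative count $s-1-\tfrac18$ and the remaining $k$ factors of $u^{(j)}$ in $L^4_{T,x}$ or $L^\infty_{T,x}$. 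In Case B, after the time-truncation decomposition \eqref{eq:split} at scale $R = N_1^{1/3} N_3^{4/3}$ and the modulation decomposition \eqref{eq:split3} at scale $L = N_1 N_3$, the uniform non-resonance lower bound of Lemma \ref{lem_res1} delivers the modulation gain that absorbs one derivative. Case C is handled by the further splitting $|n_3+n_4|\gg k|n_5|$ versus $|n_3+n_4|\les k|n_5|$, using Lemma \ref{lem_res2} in the non-resonant branch and the Strichartz-based bound of Case A in the almost-resonant branch. In every case one copy of $w$ is placed in $X^{s-2,1,\dl}_T$ (a summand of $M^{s-1,\dl}_T$) and the other in $L^\infty_T H^{s-1}$, while the $k-1$ factors of $Q$ are controlled in $L^\infty_T H^s$ uniformly by $\|u^{(j)}\|_{M^{s,\dl}_T}$.

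The main obstacle is the loss of full symmetry: in Proposition \ref{PRO:EN1} the symbol $A(n_1,\dots,n_{k+2}) = \sum_j \phi_N^2(n_j)n_j$ arose from symmetrizing over all $k+2$ copies of $u$, which together with $\sum n_j = 0$ produced the crucial cancellation. Here only the two $w$-factors are interchangeable, so I would symmetrize exclusively within the $w$-pair, yielding an analogue of $A_1(n_1,n_2) = \phi_N^2(n_1)n_1 + \phi_N^2(n_2)n_2$, and treat the $Q$-factors asymmetrically (distributing them by Littlewood--Paley and size). Once this partial symmetrization is in place, each case follows the template of \cite[Lemma 3.2]{MV15} for $k=2$, $s>\tfrac12$ and of \cite[Proposition 4.8]{MT} for general $k\geq 2$, $s\geq \tfrac34$ essentially verbatim; crucially, uniformity in $\dl$ is preserved throughout because every ingredient invoked (Proposition \ref{PRO:stri} and Lemmas \ref{lem_res1}--\ref{lem_res2}) is already uniform in $\dl$. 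The shallow-water case (ii) is identical after replacing $\pdeep_\dl$ by $\pshallow_\dl$ and $M^{s,\dl}_T$ by $N^{s,\dl}_T$, since the resonance-function lower bounds of Lemmas \ref{lem_res1}--\ref{lem_res2} and the linear estimates of Proposition \ref{PRO:stri} both cover the $0 < \dl < 1$ regime.
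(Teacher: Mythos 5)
Your proposal is correct and follows essentially the same route as the paper: the same factorization $(u^{(1)})^k-(u^{(2)})^k = w\sum_j (u^{(1)})^j(u^{(2)})^{k-1-j}$, the same reduction to the multilinear integral $\sum_N N^{2(s-1)}\big|\int \mathbf{z}^k\, w\, P_N^2\partial_x w\big|$, and the same case analysis driven by the uniform Strichartz bounds and the uniform resonance lower bounds of Lemmas \ref{lem_res1}--\ref{lem_res2}. The paper itself defers the case analysis to \cite[Proposition 5.1]{MT}; your discussion of the partial symmetrization over the two $w$-factors and the placement of one copy of $w$ in $X^{s-2,1,\delta}_T$ and the other in $L^\infty_T H^{s-1}$ accurately fills in what that reference supplies.
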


\begin{proof}

For simplicity, let us denote the two solutions are $u,v \in M_T^{s,\dl}$
associated with the initial data $u_0,v_0 \in H^s(\T) $.
The difference  $w=u-v$ satisfies
\eqref{eq_w}.
Moreover, we have the following
	\begin{equation}
	\label{df1}
u^k-v^k=\sum_{k\ge 2} 
	\sum_{i=0}^{k-1}  w u^i  v^{k-1-i}.
	\end{equation}

	\noi
	We
	proceed as in the proof of Proposition \ref{PRO:EN1} to  see from \eqref{eq_w} that for $t\in[0,T]$
	and we obtain
	\begin{align*}
	\|w(t)\|_{H_x^{s-1}}^2
	\le \|u_0-v_0\|_{H^{s-1}}^2
	+  2\sum_{k\ge 2}   	\max_{i\in \{0,..,k-1\}} I_{k,i}^t ,
	\end{align*}

	\noi
	where $I_{k,i}^t $ is defined as
	\begin{align*}
	I_{k,i}^t:= \sum_{N\ge 1}  N^{2(s-1)} \bigg|\int_0^t\int_\T  u^i  v^{k-1-i} wP_N^2\dx w\,  dxdt'\bigg| .
	\end{align*}

	\noi
	Therefore we are reduced to estimating the contribution of
	\begin{equation*}
	I_{k+1}^t=  \sum_{N\ge 1}  N^{2(s-1)} \bigg|\int_0^t\int_\T  \textbf{z}^k wP_N^2\dx w\,  dxdt'\bigg| \;
	\end{equation*}

	\noi
	where
	now we take $k\geq1$ and
	 $\textbf{z}^k $ stands for $ u^i v^{k-i} $ for some $i\in \{0,..,k\}$.  
	We set
	$
	C:=C(\|u\|_{M^{s,\dl}_T}+\|v\|_{M^{s,\dl}_T})
	$,
	%
	%
	%
	%
	%
	and we suffice to show for any $ k\ge 1$ the following bound holds
	\begin{equation}
	\label{eq:dfmain1}
	I_{k+1}^t \le T^{\frac14}   C^k 
	\|w\|_{M^{s-1,\dl}_T}
	\|w\|_{L_T^\infty H^{s-1}_x}.
	\end{equation}
	
\noi
The proof of \eqref{eq:dfmain1} follows similar to \cite[Proposition 5.1]{MT}  (\cite[Proposition 3.5]{MT}, when we only consider $k=2$),
in the way we need to use the uniform estimates from Proposition \ref{PRO:stri} and Corollary \ref{COR:stri} and the uniform lower bounds on the resonance functions from Lemmas \ref{lem_res1} and \ref{lem_res2}.
This idea we have already saw in the proof of 	Proposition \ref{PRO:EN2},
for the sake of conciseness, the details of the proof have been omitted here.

\end{proof}

\section{Convergence of the ILW-type equations}
\label{SEC:limit1}

We have previously discussed the two potentially singular limits of the ILW-type equation in Subsection \ref{SUB:LE}, namely $\dl \to \infty \backslash 0$.
  In the following analysis, we emphasize the dependence of $\dl$, $u = u_\dl$, and write the equation in the form
\begin{equation}
\label{gILWd}
\dt u_\dl -
\G \partial_x^2 u_\dl  =  \dx ( u_\dl)^k
\end{equation}

\noi 
to reflect this dependence.
 In this section, we aim to   construct a perturbative argument to show the convergence of solutions of ILW-type. 
  Our first goal is to establish that the solutions remain uniformly bounded with respect to the depth parameter $\delta$.
   This result ensures that the solutions remain well-defined as $\delta$ approaches zero or infinity.
  Subsequently, 
  we show on the limits of $\delta$ as it approaches  infinity (deep-water limit) and zero (shallow-water limit).

It should be noted that the methodology developed in this section can be easily applied to the ungauged methods outlined in \cite{MV15, MT} to study the convergence of ILW-type equations.
For the purpose of brevity, we will only present the full details of the convergence of gILW \eqref{gILW1} and scaled gILW \eqref{ILW3}. 
Nonetheless, the proof of  the specific $k=2$ and $s>\frac12$ can be achieved by using a similar methodology. We will explain where the changes are along our proof in the following discussion.

\subsection{Uniform control on the solutions}
\label{SEC:Lo}

In this subsection, we aim to show that the solutions of the equations are uniformly controlled with respect to the depth parameter $\delta$. 
One crucial observation is that the results of \cite[Theorem 1.1]{MT} in conjunction with our uniform estimates Propositions \ref{PRO:EN1} and \ref{PRO:EN2} imply the following uniform well-posedness results.

\begin{proposition}[Uniform well-posedness]
	\label{PRO:LWP2}

Let  $k\geq 2$ and $s\geq \frac34$. Then,
the   gILW  equation \eqref{gILW1} and   the scaled gILW  equation  \eqref{ILW3} are  unconditionally locally well-posed in $H^s(\T)$. 
The maximal time of existence  for both  gILW   and  scaled gILW, is dependent only on the initial data. 
 Moreover,   for any	$0<\dl\leq \infty$, 
  the following estimates hold:
  \begin{align}
 \|u_{\dl}\|_{C_{T} H^{s}(\T) }  \leq 2\|u_{0}\|_{H^{s}(\T)}
   \quad\quad \text{and} \quad\quad
 \|v_{\dl}\|_{C_{T} H^{s}(\T) } \leq 2\|v_{0}\|_{H^{s}(\T)}.
  \label{eq:LWP2}
 \end{align}

 Furthermore,
when we only consider  $k=2$,
the statements hold true for $s > \frac 12$.

\end{proposition}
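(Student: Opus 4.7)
The plan is to combine the fixed-$\delta$ unconditional well-posedness result of Molinet--Tanaka (Lemma~\ref{LM:LWP}) with the uniform energy estimate (Proposition~\ref{PRO:EN1}), the uniform difference estimate (Proposition~\ref{PRO:EN2}), and the $M^{s,\dl}_{\omega,T}$-type control coming from the Duhamel formulation (Lemma~\ref{lem1}). The point is that all of these auxiliary estimates were already proved with constants independent of $\dl$ (over $\dl\in[2,\infty]$ in the deep-water regime and $\dl\in(0,1)$ in the shallow-water regime), so the $\dl$-dependence in Lemma~\ref{LM:LWP} is an artifact of the proof rather than of the equation, and a standard bootstrap argument will upgrade it to a uniform statement. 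I will write out the deep-water case for $u_\dl$; the shallow-water case for $v_\dl$ is strictly analogous using the $N^{s,\dl}_T$ spaces.

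First I would fix $u_0\in H^s(\T)$ and, for $\dl\in[2,\infty]$, invoke Lemma~\ref{LM:LWP} to obtain a local solution $u_\dl\in C([0,T_\dl];H^s(\T))\cap M^{s,\dl}_{T_\dl}$ on some (a priori $\dl$-dependent) time interval. Combining \eqref{eq:ms} of Lemma~\ref{lem1} with Proposition~\ref{PRO:EN1}, and setting $X_\dl(T):=\|u_\dl\|_{L^\infty_T H^s_\omega}$ for a frequency envelope $\{\omega_N\}$ adapted to $u_0$ (chosen via Lemma~\ref{LEM_ev} so that $\|u_0\|_{H^s_\omega}\le 2\|u_0\|_{H^s}$ and $\omega_N\to\infty$), I obtain an estimate of the shape
\begin{equation*}
X_\dl(T)^2 \;\le\; \|u_0\|_{H^s_\omega}^2 + T^{\frac14}\,C(X_\dl(T))\,X_\dl(T)^2,
\end{equation*}
with $C(\cdot)$ a continuous nondecreasing function that does \emph{not} depend on $\dl\in[2,\infty]$. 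A standard continuity/bootstrap argument in $T$, starting from $X_\dl(0)=\|u_0\|_{H^s_\omega}$ and using the fact that $T\mapsto X_\dl(T)$ is continuous on $[0,T_\dl)$, then forces
\begin{equation*}
X_\dl(T)\;\le\;2\|u_0\|_{H^s_\omega}\;\le\;4\|u_0\|_{H^s}
\end{equation*}
on a time interval $[0,T_0]$ with $T_0=T_0(\|u_0\|_{H^s})>0$ independent of $\dl$. Feeding this back into \eqref{eq:ms} yields a uniform bound on $\|u_\dl\|_{M^{s,\dl}_{\omega,T_0}}$, so in particular $u_\dl$ does not blow up before $T_0$ and the solution extends, with the claimed bound \eqref{eq:LWP2} (after a final rechoice of the constant).

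Next, uniqueness and continuous dependence on the data will be handled through Proposition~\ref{PRO:EN2} together with \eqref{eq:mdf} of Lemma~\ref{lem1}. For two $H^s$-solutions $u^{(1)},u^{(2)}$ with close data, the difference $w=u^{(1)}-u^{(2)}$ satisfies \eqref{eq_w}, and combining these two estimates gives
\begin{equation*}
\|w\|_{L^\infty_T H^{s-1}}^2 \;\le\; \|w(0)\|_{H^{s-1}}^2 + T^{\frac14}\,\widetilde C\big(\|u^{(1)}\|_{M^{s,\dl}_T},\|u^{(2)}\|_{M^{s,\dl}_T}\big)\,\|w\|_{L^\infty_T H^{s-1}}^2,
\end{equation*}
with $\widetilde C$ again $\dl$-independent. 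For $T$ small in terms of the $H^s$-norms of the data this absorbs the right-hand side, giving $w\equiv 0$ when the data coincide (unconditional uniqueness) and Lipschitz dependence at the $H^{s-1}$-level. Promoting this $H^{s-1}$-Lipschitz bound to continuity at the $H^s$-level is the standard frequency-envelope argument of Tao~\cite{Tao04} / Kenig--Tao~\cite{KT}: tails of $\{\omega_N\}$ control the high-frequency part uniformly in $\dl$, while the already established $H^{s-1}$-continuity handles the (finite) low frequencies.

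The main obstacle in the plan is making sure that every constant that appears is genuinely $\dl$-independent; this is exactly what Propositions~\ref{PRO:EN1}--\ref{PRO:EN2} were designed for, and the key inputs are the uniform-in-$\dl$ dispersion bounds (Corollary~\ref{LEM:p2b}, Lemma~\ref{LEM:p12}, Lemma~\ref{LEM:GW31}), the uniform lower bounds on the resonance functions (Lemmas~\ref{lem_res1}--\ref{lem_res2}), and the uniform short-time Strichartz estimates (Proposition~\ref{PRO:stri}). The endpoints $\dl=\infty$ (gBO) and $\dl=0$ (gKdV, after rescaling) require no extra work: all estimates have already been verified to be stable as $\dl\to\infty$ or $\dl\to 0$ in the respective regimes, so the same bootstrap applies verbatim and covers the limiting equations. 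Finally, when $k=2$, the energy estimate of \cite{MV15} holds for $s>\tfrac12$ with the trivial envelope $\omega_N\equiv 1$, and repeating the same bootstrap with this improved input yields the statement at $s>\tfrac12$.
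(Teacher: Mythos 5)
Your proposal is correct and takes essentially the same route as the paper: the paper's proof consists of invoking the Molinet--Tanaka local well-posedness scheme (\cite{MT}, Section 6; \cite{MV15}, Section 3B for $k=2$) with the fixed-$\dl$ inputs replaced by the uniform estimates of Propositions~\ref{PRO:EN1} and~\ref{PRO:EN2}, which is exactly the bootstrap you carry out. Your write-up merely makes explicit the continuity argument in $T$, the extension to a $\dl$-independent existence time, and the frequency-envelope upgrade from $H^{s-1}$-Lipschitz to $H^{s}$-continuity that the paper leaves to the cited references.
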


\begin{proof}

This proof  follows from \cite[Secrion 6]{MT}, in the way we need to apply our uniform estimates Propositions \ref{PRO:EN1} and \ref{PRO:EN2}.
When $k=2$ only, see in \cite[Section 3B]{MV15}.
\end{proof}

\begin{remark}
\rm
\label{RM:cts}

The  frequency parameter $\omega_N$ we saw in the case of treating general nonlinearity is used to show
continuity of the flow map. 
Moreover, by Lemma \ref{LEM_ev} gives that $\omega_N$
 depends only on the initial data and the approximation sequence of the initial condition such that 
\begin{equation*}
\label{env}
\|u_0\|_{H^s_\om} <\infty , 
\quad \quad
\sup_{n \ge 1} \|u_{0,n} \|_{H^s_\om} <\infty,
\quad \quad 
\text{where} \quad  \om_N  \too\infty \; .
\end{equation*}

\noi
By applying
Proposition \ref{PRO:EN1}, we have
the following 
\begin{align*}
\begin{aligned}
\|P_{\leq K}  u_n -u_n\|_{L^\infty_T H_{x}^s}^2 &=
\sum_{N>K} \|P_{N}  u_n     \|_{L^\infty_T H_{x}^s }^2   \leq
\sup_n
\sum_{N>K}
{\om_N^{-2}  } (
  {\om_N^{2}  }
\|P_{N}  u_{n}     \|_{ L^\infty_T  H_{x}^s }^2
 )\\
&\leq
{\om_K^{-2}  }  \sup_n
\| u_n\|^2_{L^\infty_T  H_\om^s}
\les
{\om_K^{-2}  } 
\sup_n
\| u_{0,n}     \|^2_{ H^s_{\om} }
< {\eps}.
\end{aligned}
\end{align*}

\noi 
 see the   relevant discussion  on continuity with respect to initial data in \cite{KT, MT}. 
 This delicate argument enables us to show that the high-frequency component of the smooth approximating solution, say $u_n$\footnote{Here, the sequence $u_{n}$ represents solutions to our gILW equation \eqref{gILW1} generated from the initial data $u_{0,n}$ which converges to $u_{0}$ in the $H^{s}(\M)$ norm.}, can be made arbitrarily small.
\end{remark}

\subsection{The deep-water limit}
\label{SUB:limit1}

In this subsection, we will complete the proof of Theorem \ref{THM:1}. 
We will treat the equation \eqref{gILWd} as a perturbation of the gBO equation, similar to how we treated \eqref{PE1}.
\begin{equation}
\label{eq:p1}
\partial_t u_\dl - \H  (\partial_x^2 u_\dl) +
\cK_\dl \dx u_\dl
=  \dx (u_\dl^k),
\end{equation}

\noi 
where recall    $\cK_\dl = (\H-  \G )\dx $
such that 
\begin{align}
\label{defq1}
\ft{\cK_\dl u}(n) = q_{\dl}(n) \ft u (n)
\end{align}

\noi
and $q_{\dl}(n)$  is defined to be the Fourier symbol
\begin{align}
\label{defq2}
q_{\dl}(n) 
=  \frac{1}{  \dl } -  n  \coth(\dl n)+ |n| .
\end{align}

\noi
It is clear that Lemma \ref{LEM:p1d} and \eqref{defq2} imply that  
for all $n\in \ZP$,
\begin{align}
\label{defq3}
0\leq q_{\dl}(n) \leq 
\frac{2}{\dl}.
\end{align}

\noi
From \eqref{defq3}, it is evident that the operator $\cK$ is of order 0 and it is bounded on all Sobolev spaces $H^s(\T)$. Additionally, by checking \eqref{eq:p1} and \eqref{defq3}, it is clear that as $\delta \to \infty$, we obtain the gBO equation formally. To rigorously jusstify that $u_\delta$ is a solution of the gBO equation as $\delta \to \infty$, we will prove that $\{u_\delta\}_{\delta \geq 1}$ forms a Cauchy sequence in a suitable space.

For any $2 \leq \delta, \gamma \leq \infty$, let $u_\gamma$ and $u_{\delta}$ be solutions of $\textrm{gILW}_{\gamma}$ and $\textrm{gILW}_{\delta}$ (\eqref{gILWd} with different depth parameters), respectively. We define the difference between them as $w = u_{\gamma} - u_{\delta}$, which solves the following initial value problem:
\begin{equation}
\label{ILW5}
\begin{cases}
\dt w -\mathcal H(\dx^2 w)+ \cK_\dl  (\dx w) 
= (\cK_\gamma-  \cK_\dl)\dx u_{\g}-
\dx ( u_{\g}^k - u_{\dl}^k  ) \\
w(x,0) = 0.
\end{cases}
\end{equation}

\noi
Moreover, it is advantageous to use the following notation:
\begin{align}
\label{dfT}
T_{\dl,\gamma}(u)=\cK_\gamma(u) -  \cK_\dl( u).
\end{align}

\begin{proposition}
\label{LM:ca1}
Let $k\geq 2$, $s\geq \frac34$ and $0<T<1$.
Then, the one-parameter family of solutions $\{u_\dl\}_{\dl\geq 2} $
is Cauchy in $C([0,T];H^s(\T))$ as $\dl\to \infty$.
Moreover, when $k = 2$ only the statement holds for $s > \frac 12$.
\end{proposition}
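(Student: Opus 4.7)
\textbf{Proof plan for Proposition~\ref{LM:ca1}.} The overall strategy is to first establish that $\{u_\dl\}_{\dl\ge2}$ is Cauchy in the weaker space $C([0,T];H^{s-1}(\T))$, and then upgrade to $H^s(\T)$ by a frequency-truncation argument together with a uniform frequency envelope control of the tails. I take $u_\g,u_\dl$ to be the solutions of \eqref{gILWd} with respective depths $\g,\dl\ge2$ and common initial datum $u_0$, and let $w=u_\g-u_\dl$, which solves the perturbed gBO-type problem~\eqref{ILW5} with $w(0)=0$.

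For the $H^{s-1}$ Cauchy estimate I would apply $P_N$ to~\eqref{ILW5}, test against $P_Nw$, sum in $N$, and integrate in time. The operator $\cK_\dl\dx$ has Fourier symbol $inq_\dl(n)$, which is purely imaginary and odd on $\widehat\M$, so $\cK_\dl\dx$ is skew-adjoint and commutes with $P_N$; hence, like $\H\dx^2$, it does not contribute to the $H^{s-1}$ energy identity. The linear source $T_{\dl,\g}\dx u_\g$ defined in~\eqref{dfT} is controlled by the pointwise symbol bound~\eqref{defq3}, which yields the uniform estimate
\[
\|T_{\dl,\g}\dx u_\g\|_{L^\infty_T H^{s-1}}\les\Big(\tfrac{1}{\dl}+\tfrac{1}{\g}\Big)\|u_\g\|_{L^\infty_T H^s},
\]
since $(1+n^2)^{s-1}n^2\le(1+n^2)^s$ absorbs the one derivative. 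The nonlinear difference $\dx(u_\g^k-u_\dl^k)$ is handled by the symmetrized energy argument of Proposition~\ref{PRO:EN2}, the new twist being that one solution $u_\g$ must be placed in the ``wrong'' dispersive space $M^{s,\dl}_T$; this is exactly where the forthcoming Lemma~\ref{LEM:ca1b} intervenes to give $\|u_\g\|_{M^{s,\dl}_T}\les\|u_\g\|_{M^{s,\g}_T}$ uniformly on $\dl,\g\ge2$, affordable because~\eqref{HH1ab} forces $\pdeep_\dl(n)\sim|n|^2$ uniformly for $\dl\ge2$ and therefore the modulation mismatch $|\pdeep_\dl(n)-\pdeep_\g(n)|$ is $O(1)$. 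To control $\|w\|_{M^{s-1,\dl}_T}$ one passes through Duhamel and an $X^{s-2,1,\dl}$-type bound on the linear source, using Lemma~\ref{LEM:ca1c} as the analogous depth-switching device at the modulation level. Combining these pieces with~\eqref{eq:LWP2} one reaches an inequality of the form
\[
\|w\|_{M^{s-1,\dl}_T}^2\les T^\theta C(\|u_0\|_{H^s})\Big(\tfrac{1}{\dl}+\tfrac{1}{\g}\Big)+T^\theta C(\|u_0\|_{H^s})\|w\|_{M^{s-1,\dl}_T}^2
\]
for some $\theta>0$; picking $T=T(\|u_0\|_{H^{3/4}})$ as in Proposition~\ref{PRO:LWP2}, which is independent of $\dl,\g$, absorbs the quadratic term and gives $\|u_\g-u_\dl\|_{C_T H^{s-1}}\to 0$ as $\dl,\g\to\infty$.

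To lift from $H^{s-1}$ to $H^s$ for the general nonlinearity I would invoke the frequency envelope device recalled in Remark~\ref{RM:cts}: by Lemma~\ref{LEM_ev}, for any $u_0\in H^s$ one can find a dyadic sequence $\{\om_N\}$ satisfying~\eqref{eq_dydic} with $\om_N\nearrow\infty$ and $\|u_0\|_{H^s_\om}<\infty$. Proposition~\ref{PRO:EN1} then yields the uniform-in-$\dl$ tail bound
\[
\sup_{\dl\ge2}\|P_{>K}u_\dl\|_{L^\infty_T H^s}\le \om_K^{-1}\sup_{\dl\ge2}\|u_\dl\|_{L^\infty_T H^s_\om}\xrightarrow[K\to\infty]{}0.
\]
Splitting $u_\g-u_\dl=P_{\le K}(u_\g-u_\dl)+P_{>K}u_\g-P_{>K}u_\dl$, the low-frequency piece is majorized by $K\|u_\g-u_\dl\|_{C_T H^{s-1}}$ via Bernstein and the previous step, while the two high-frequency tails are made arbitrarily small by taking $K$ large. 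A standard $\eps$-argument then produces Cauchyness in $C_T H^s$. For the special case $k=2$ and $s>\tfrac12$, the same scheme applies with $\om\equiv1$ and with the $k=2$ versions of Propositions~\ref{PRO:EN1}--\ref{PRO:EN2} from~\cite{MV15}.

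The main obstacle is the nonlinear energy estimate for $w$ when $u_\g$ and $u_\dl$ live on different dispersive backgrounds: the symmetrization behind Proposition~\ref{PRO:EN2} forces both factors into a single $M^{s,\dl}_T$ space tailored to $\pdeep_\dl$, and the mismatch with $\pdeep_\g$ must be digested for free. In the deep-water regime this is precisely what Lemmas~\ref{LEM:ca1b}--\ref{LEM:ca1c} accomplish, thanks to the uniform asymptotics~\eqref{HH1ab}, which keeps the modulation perturbation of order $O(1)$; by contrast, in the shallow-water analogue of Subsection~\ref{SUB:limit2} the analogous mismatch grows as $O(n^3)$ and forces the additional frequency truncation alluded to after Theorem~\ref{THM:2}.
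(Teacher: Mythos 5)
Your proposal follows essentially the same route as the paper's proof: a perturbative energy estimate showing $w=u_\g-u_\dl$ is Cauchy in $C_TH^{s-1}$, with the linear source $T_{\dl,\g}\dx u_\g$ controlled by \eqref{defq3} and the nonlinear difference handled via Proposition \ref{PRO:EN2} together with the depth-switching Lemmas \ref{LEM:ca1b}--\ref{LEM:ca1c}, followed by a frequency-truncation and envelope argument to upgrade to $C_TH^{s}$. The only cosmetic differences are that the paper closes the truncation step by choosing $K=\dl^{1/2}$ to exploit the quantitative rate $\|w\|_{C_TH^{s-1}}\les \dl^{-1}+\g^{-1}$ whereas you fix $K$ first and then send $\dl,\g\to\infty$ (both valid), and that the linear source should enter your squared energy inequality as $(\dl^{-1}+\g^{-1})^2$ rather than to the first power --- a harmless imprecision that does not affect the conclusion.
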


Before proving Proposition \ref{LM:ca1}, we will require the following lemma, which investigates the properties of $M^{s,\delta}_{T}$ for different values of $\delta$.

\begin{lemma}
	\label{LEM:ca1b}
	Let $k\geq 2$ and 
	$0<T<1$.
	Assume that	$s>\frac12$ and $u_{\g}\in L^\infty([0,T];H^s(\T))$ be the solution of $\textrm{gILW}_{\g}$ \eqref{gILWd}
	associated with initial data $u_0 \in H^s(\T)$.
	Then, 
	for any $  2\leq \dl, \g\leq \infty $,
	we have
	\begin{align}
	\label{eq:eq1}
	\| u_{\g} \|_{M_{T}^{s,\dl}}
	\leq C \|u_{\g} \|_{M^{s,\g}_{T}},
	\end{align}

	\noi
	Moreover, there exits a universal constant $C>0$ such that the following holds:
	\begin{align}
	\label{eq:eq2}
	\| u_{\g} \|_{M_{T}^{s,\dl}}
	\leq C \|u_{\g}\|_{L^{\infty}_{T} H^{s}_{x}  } + C(\|u_{\g}\|_{L^{\infty}_{T,x} }   )
	\|u_{\g} \|_{L^{\infty}_{T} H^{s}_{x} }
	\end{align}

\end{lemma}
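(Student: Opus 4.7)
The key structural observation is that for $\dl, \g \geq 2$, Corollary \ref{LEM:p2b} gives
\begin{equation*}
|K_\dl(n) - K_\g(n)| \leq \tfrac{1}{\dl} + \tfrac{1}{\g} \leq 1,
\qquad \text{hence} \qquad
|\pdeep_\dl(n) - \pdeep_\g(n)| = |n|\,|K_\dl(n) - K_\g(n)| \leq |n|,
\end{equation*}
uniformly for $\dl, \g \in [2,\infty]$. In other words, the deep-water dispersion symbols differ only by an order $|n|$ quantity as the depth varies, a feature specific to this regime. By the triangle inequality, this yields the pointwise bound $\jb{\tau - \pdeep_\dl(n)} \lesssim \jb{\tau - \pdeep_\g(n)} + \jb{n}$, which is the engine for absorbing the perturbation of the dispersion.

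The plan for \eqref{eq:eq1} is to push this pointwise bound through the Fourier definition of $X^{s-1,1,\dl}$. Given $u_\g \in M^{s,\g}_T$, I would first pick an almost-optimal extension $\wt u_\g : \R \times \T \to \R$ of $u_\g$ with $\|\wt u_\g\|_{M^{s,\g}} \lesssim \|u_\g\|_{M^{s,\g}_T}$, and multiply it by a fixed smooth time cutoff $\eta$ equal to one on $[0,T] \subset [0,1]$ and supported in $[-1,2]$, so that the resulting (still admissible) extension is compactly supported in time. The cutoff acts boundedly on $L^\infty_t H^s$ and, crucially, uniformly in $\g$ on $X^{s-1,1,\g}$: on the Fourier side it acts by convolution of $\ft{\wt u_\g}(\tau,n)$ with the Schwartz function $\hat\eta$ in the variable $\tau$, which commutes with the $\g$-dependent shift $\tau \mapsto \tau - \pdeep_\g(n)$ and is bounded by a standard Young-type estimate. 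Multiplying the pointwise inequality by $\jb{n}^{s-1}|\ft{\wt u_\g}(\tau,n)|$ and taking the $\ell^2_n L^2_\tau$ norm then produces
\begin{equation*}
\|\wt u_\g\|_{X^{s-1,1,\dl}} \lesssim \|\wt u_\g\|_{X^{s-1,1,\g}} + \|\wt u_\g\|_{L^2_t H^s_x}.
\end{equation*}
Since $\wt u_\g$ is time-supported in $[-1,2]$, Cauchy--Schwarz bounds the last term by $\|\wt u_\g\|_{L^\infty_t H^s_x}$; adding back the $L^\infty_t H^s$ contribution and taking the infimum over admissible extensions yields \eqref{eq:eq1}.

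To deduce \eqref{eq:eq2}, I will apply Lemma~\ref{lem1}(i) directly to $u_\g$ at its own depth parameter $\g$ (so that the equation matches the space), obtaining $\|u_\g\|_{M^{s,\g}_T} \lesssim \|u_\g\|_{L^\infty_T H^s} + C(\|u_\g\|_{L^\infty_{T,x}})\|u_\g\|_{L^\infty_T H^s}$; chaining with \eqref{eq:eq1} delivers \eqref{eq:eq2}. The main technical subtlety is verifying that the smooth time cutoff is bounded on $X^{s-1,1,\g}$ with constant independent of $\g$, so that we may cleanly separate the $\g$-dependent $X^{s-1,1,\g}$ piece from the $\g$-independent $L^\infty_t H^s \cap L^2_t H^s$ piece; but this is routine once one observes that the cutoff acts only by $\tau$-convolution on the Fourier side and thus commutes with the depth-dependent translation.
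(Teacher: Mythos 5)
Your proposal is correct and follows essentially the same route as the paper: the pointwise comparison $\jb{\tau-\pdeep_{\dl}(n)}\les \jb{\tau-\pdeep_{\g}(n)}+(\dl^{-1}+\g^{-1})\jb{n}$ (the paper phrases it via the symbol $q_\dl$ with $0\le q_\dl\le 2/\dl$, which is the same bound you extract from Corollary~\ref{LEM:p2b}), followed by chaining with Lemma~\ref{lem1} to get \eqref{eq:eq2}. Your extra care with the time cutoff on an almost-optimal extension is a harmless refinement of a step the paper carries out directly on the restricted norms.
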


\begin{proof}

Firstly, Lemma \ref{lem1} implies
$u_{\g}\in M^{s,\g}_{T}$, where 
we recall the definition of $M^{s,\g}_{T}=L^{\infty}_{T}H^{s}_{x}\cap X^{s-1,1,\g}_{T}$.
	Moreover, 
	we recall the symbol   $\pdeep_{\dl}(n)$ from \eqref{dispersionP}, and notice we can write the following 
	\begin{align*}
	\tau - \pdeep_{\dl}(n) &=\tau - \pdeep_{\g}(n) - (\pdeep_{\dl} - \pdeep_{\g}(n))\\
	&=\tau - \pdeep_{\g}(n)  +n(q_{\dl}(n) -q_{\g}(n) ),
	\end{align*}

	\noi
	where $q_{\dl}(n)$ is defined in \eqref{defq2}.
	Therefore, by the triangle inequality and \eqref{defq3} we obtain 
	\begin{align}
	\label{uu1}
	\jb{ \tau - \pdeep_{\dl}(n) } \les 
	\jb{\tau - \pdeep_{\g}(n)}  +   (\dl^{-1}+\g^{-1})  \jb{n}.
	\end{align}

\noi 
	Now, by the definition of $X^{s,b,\dl}$-space \eqref{deX}  and \eqref{uu1},
		for any $ \g,\dl\geq 2$ we have
	\begin{align*}
	\| u_{\g} \|_{X^{s-1,1,\dl}_{T}  }
	&\les \| u_{\g} \|_{X^{s-1,1,\g}_{T}  }
	+(\dl^{-1}+\g^{-1})   \| u_{\g} \|_{X^{s,0}_{T}  }\\
	&= \| u_{\g} \|_{X^{s-1,1,\g}_{T}  }
	+(\dl^{-1}+\g^{-1})   \| u_{\g} \|_{ L^{2}_{T} H^{s}_{x}  }\\
	&\les  \| u_{\g} \|_{X^{s-1,1,\g}_{T}  }
	+    \| u_{\g} \|_{ L^{\infty}_{T} H^{s}_{x}  }.
	\end{align*}
	
	\noi
	In particular, we see
	\begin{align*}
	\| u_{\g} \|_{M^{s,\dl} _{T}}
	&=	\| u_{\g} \|_{X^{s-1,1,\dl}_{T}  } 
	+ \| u_{\g} \|_{ L^{\infty}_{T} H^{s}_{x}  }\\
&	\les
	 \| u_{\g} \|_{X^{s-1,1,\g}_{T}  }
	 +    \| u_{\g} \|_{ L^{\infty}_{T} H^{s}_{x}  } \les  \|u_{\g}\|_{M^{s,\g}_{T}}
	\end{align*}
	
	\noi
	holds
	for any $ \g,\dl\geq 2$. This shows \eqref{eq:eq1}.

	To obtain a more explicit bound for \eqref{eq:eq2}, we will perform the $X^{s,b}$-analysis similar to that of Lemma \ref{lem1}.
	 For $u_{\gamma}$ satisfies the Duhamel formulation, it suffices to check the following:
	\begin{align*}
	\| u_{\g} \|_{X^{s-1,1,\g}_{T}  }
	\les\| u_{\g} \|_{ L^{\infty}_{T} H^{s}_{x}  }
	+C(\|u_{\g}\|_{L^{\infty}_{T,x} })
	\| u_{\g} \|_{ L^{\infty}_{T} H^{s}_{x}  },
	\end{align*}

	\noi
	which follows directly from \eqref{eq:ms}. Therefore,  for any $\g,\dl\geq2$, we have
	\begin{align*}
	\| u_{\g} \|_{M^{s,\dl}_{T}  }
	\les  \|u_{\g}\|_{M^{s,\g}_{T}}
	\les \| u_{\g} \|_{ L^{\infty}_{T} H^{s}_{x}  } +C(\|u_{\g}\|_{L^{\infty}_{T,x} })
	\| u_{\g} \|_{ L^{\infty}_{T} H^{s}_{x}  }.
	\end{align*}
	
	\noi
This finishes the proof of \eqref{eq:eq2}.

\end{proof}

\begin{lemma}
	\label{LEM:ca1c}
Let $k\geq 2$ and  $0 < T < 1$.
Assume that  $s > \frac{1}{2}$,
 $u_{\delta}, u_{\gamma} \in L^\infty([0,T];H^{s}(\T))$ be solutions of $\textrm{gILW}_{\gamma}$ and $\textrm{gILW}_{\delta}$ \eqref{gILWd} with initial data $u_{0,\gamma}, u_{0,\delta}\in H^s(\T)$, respectively.
	Then,  for any
	$2\leq \g, \dl \leq \infty $ such that $w=u_{\g}-u_{\dl}$,
	we have
	\begin{align}
	\label{eq:eq3}
	\begin{aligned}
	\| w\|_{M_{T}^{s-1,\dl}}
	&\les 
	\| w \|_{ L^{\infty}_{T} H^{s-1}_{x}  } +C(\|u_{\g}\|_{L^{\infty}_{T} H^{s}_{x} }  +\|u_{\dl}\|_{L^{\infty}_{T} H^{s}_{x} } ) \|w \|_{ L^{\infty}_{T} H^{s-1}_{x}    }\\
	& \qquad
	+    (\dl^{-1}+\g^{-1})\| u_{\g}\|_{L^{\infty}H^{s-1}_{x} }.
	\end{aligned}
	\end{align}
	
\end{lemma}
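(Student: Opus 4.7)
The plan is to regard $w = u_\g - u_\dl$ as a solution of the $\textrm{gILW}_\dl$ equation driven by two source terms: the linear mismatch $T_{\dl,\g}(\dx u_\g)$ and the nonlinear difference $-\dx(u_\g^k - u_\dl^k)$, as displayed in \eqref{ILW5}. The left-hand side of \eqref{ILW5} generates precisely the $\textrm{gILW}_\dl$ flow $S_\dl^{(d)}(t)$, so the space $X^{s-2,1,\dl}$ is perfectly adapted to $w$. The strategy is then to run the $X^{s,b,\dl}$-analysis underlying Lemma \ref{lem1}'s estimate \eqref{eq:mdf}, with the linear source treated separately.

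First, I would apply the Duhamel formula with respect to $S_\dl^{(d)}$ together with the standard linear and Duhamel $X^{s,b,\dl}$ estimates (as in the proof of Lemma \ref{lem1}; see also \cite[Lemma 3.1]{MV15}) to reduce matters to
\[
\|w\|_{X^{s-2,1,\dl}_T} \lesssim \|w(0)\|_{H^{s-2}} + \|T_{\dl,\g}(\dx u_\g)\|_{X^{s-2,0,\dl}_T} + \|\dx(u_\g^k - u_\dl^k)\|_{X^{s-2,0,\dl}_T}.
\]
The datum term is controlled trivially by $\|w\|_{L^\infty_T H^{s-1}}$. Since $X^{s-2,0,\dl}_T = L^2_T H^{s-2}$ carries no $\dl$-dependence, both source estimates reduce to plain space-time Sobolev bounds.

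For the linear perturbation, \eqref{defq1}--\eqref{defq2} combined with the uniform bound \eqref{defq3} yield $|q_\g(n) - q_\dl(n)| \leq 2(\dl^{-1} + \g^{-1})$. Together with the extra $\dx$, this produces
\[
\|T_{\dl,\g}(\dx u_\g)\|_{L^2_T H^{s-2}} \lesssim T^{1/2}(\dl^{-1} + \g^{-1})\, \|u_\g\|_{L^\infty_T H^{s-1}},
\]
and $T^{1/2} \leq 1$ since $T < 1$, producing the last term of \eqref{eq:eq3}. For the nonlinear source, I would use the factorization
\[
u_\g^k - u_\dl^k = w \sum_{i=0}^{k-1} u_\g^i\, u_\dl^{k-1-i}.
\]
Since $s > \frac12$, the space $H^s(\T)$ is a Banach algebra and $H^{s-1}(\T)$ is a module over $H^s(\T)$. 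A fractional Leibniz bound then gives
\[
\|\dx(u_\g^k - u_\dl^k)\|_{L^2_T H^{s-2}} \lesssim \|w\|_{L^\infty_T H^{s-1}} \sum_{i=0}^{k-1} \|u_\g\|_{L^\infty_T H^s}^i \|u_\dl\|_{L^\infty_T H^s}^{k-1-i},
\]
which furnishes the middle term of \eqref{eq:eq3}. Combining these with the $\|w\|_{L^\infty_T H^{s-1}}$ contribution built into the definition of the $M^{s-1,\dl}_T$ norm completes \eqref{eq:eq3}.

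The main point to verify is that the linear/Duhamel $X^{s,b,\dl}$-framework used in the first step is uniform in $\dl \in [2, \infty]$; this is already guaranteed by the uniform dispersion estimates of Corollary \ref{LEM:p2b} and Lemma \ref{LEM:GW31}, so no genuinely new obstacle arises beyond those already handled in Section 2. In particular, the approach bypasses any resonance analysis, since the $b=0$ source bounds sit at the purely $L^2_T H^\sigma$ level.
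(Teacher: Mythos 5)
Your proposal is correct and follows essentially the same route as the paper: write the equation for $w$ as the $\textrm{gILW}_\dl$ flow driven by the linear mismatch and the nonlinear difference, apply the Duhamel/$X^{s-2,1,\dl}$ estimates (as in Lemma \ref{lem1} and \eqref{eq:mdf}), bound the mismatch term by $(\dl^{-1}+\g^{-1})\|u_\g\|_{L^\infty_T H^{s-1}}$ via \eqref{defq3}, and control the nonlinear difference through the factorization and product estimates at regularity $s>\frac12$. The only difference is that you spell out the algebra/module product bound explicitly where the paper cites the analysis behind \eqref{eq:mdf}; the content is the same.
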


\begin{proof}
	
We begin by taking the difference of $\textrm{gILW}_{\g}$ and $\textrm{gILW}_{\dl}$. Then, $w=u_{\g}-u_{\dl}$ satisfies the following equation 
	\begin{align}
	\label{eqw1}
	\dt w -\G \dx^{2} w =\dx (u_{\g}^k) - \dx (u_{\dl}^k)-( \G - \mathcal G_{\g}) \dx^2 u_{\g}.
	\end{align}
	
\noi
In addition, $w = u_{\gamma} - u_{\delta}$ satisfies the Duhamel formulation of the equation \eqref{eqw1}. Let us recall that
$M^{s-1,\g}_{T}=L^{\infty}_{T}H^{s-1}_{x}\cap X^{s-2,1,\g}_{T}$. 
To show \eqref{eq:eq3}, it suffices to estimate $w(x,t)$ in the   $X^{s-2,1,\dl}$-norm. This argument can be found in \cite[Lemma 4.7]{MT} and \cite[Lemma 3.1]{MV15}. In particular, by utilizing the Duhamel formulation of the equation \eqref{eqw1} and performing a similar $X^{s,b}$-analysis as was used to estimate \eqref{eq:mdf}, along with Corollary \ref{LEM:p2b}, we have:
 	\begin{align*}
	\|w   \|_{X^{s-2,1,\dl}_T}
	& \les   \|u_{0,\g} - u _{0,\dl} \|_{  H^{s-1}_{x}  } +       \|   u_{\g}^k   -      u _{\dl}^k  \|_{L^2_T H_x^{s-1}}
	+\|  ( \G- \mathcal G_{\g}) \dx^{2} u_{\g} \|_{X^{s-2,0,\dl}_T} \\
	&  \les   \|   w \|_{ L^\infty_T  H^{s-1}_{x}  }  +C 
	\|  w  \|_{L^\infty_T H_x^{s-1}}
	+\|  ( \G- \mathcal G_{\g}) \dx  u_{\g} \|_{L^\infty_T H_x^{s-1}}  \\
	&  \les     \|   w \|_{ L^\infty_T  H^{s-1}_{x}  }    +
C \|  w  \|_{L^\infty_T H_x^{s-1}}
	+(\dl^{-1}+\g^{-1})\|  u_{\g} \|_{L^\infty_T H_x^{s-1}},
	\end{align*}

\noi
where the constant $C=C( \|u_{\dl} \|_{L^\infty_T H_x^{s}}     +   \|  u_{\g}  \|_{L^\infty_T H_x^{s}})    $
depends only on $ \|u_{\dl} \|_{L^\infty_T H_x^{s}}  , \|  u_{\g}  \|_{L^\infty_T H_x^{s}} $.

\end{proof}

\begin{proof}[Proof of Proposition \ref{LM:ca1}]

Let us first show that $\{ u_{\dl} \}_{\dl\geq2} \subset C([0,T];H^{s-1}(\T))$ is  a Cauchy sequence
for $s\geq \frac34$.
Namely,
for	 $s\geq \frac34$, 
	 any $2\leq \g,\dl\leq \infty$, and $0<T<1$.
	There exists $C= C( \|u_0\|_{H^s(\T)}) > 0$
	independent of $\dl,\gamma$  such that
	\begin{align}
	\label{pri0}
	\|w(t)\|_{C_{T}H^{s-1}_{x}}
	\leq C  \Big(\frac{1}{\dl}+\frac{1}{\gamma} \Big),
	\end{align}

	\noi
	where $w=u_{\g}-u_{\dl}$.	
To prove this, we rewrite $u^k - v^k$ as in  \eqref{df1}.
Then,
following the steps outlined in Proposition \ref{PRO:EN2}, we arrive at our desired result
	\begin{align}
	\label{L1}
	\begin{aligned}
	\|w(t)\|^2_{H^{s-1}_{x}}
	&\les
	\sum_{N\geq 1} N^{2s-2}
	\| T_{\dl,\gamma} P_{N} \dx u_{\g}) \|_{L^{2}_{T,x}} 
	\| P_{N} w \|_{L^{2}_{T,x}} \\
	&+
	2\sum_{k\ge 2}  	\max_{i\in \{0,..,k-1\}} I_{k,i}^{t,\dl,\g} ,
	\end{aligned}
	\end{align}

	\noi
	where
	$T_{\dl,\gamma}(u) $ is defined in \eqref{dfT} and 
	 $I_{k,i}^{t,\dl,\g} $ is defined to be
	\begin{align}
	\label{energybbd}
	I_{k,i}^{t,\dl,\g}:= \sum_{N\ge 1}  N^{2(s-1)} \bigg|\int_0^t\int_\T  u_{\dl}^i  u_{\g}^{k-1-i} w  P_N^2\dx w  dxdt'\bigg| ,
	\end{align}

\noi
 for some $i\in \{0,..,k\}$.  
We see from equation \eqref{L1} that the analysis now reduces to estimating the linear perturbation and nonlinear interaction. The nonlinear interaction, as defined in \eqref{energybbd}, corresponds to the energy estimate and is essentially equivalent to Proposition \ref{PRO:EN2}.
We shall first address the linear perturbation on the right-hand side of \eqref{L1}. To do so, we apply the equations \eqref{defq1}, \eqref{defq3}, \eqref{dfT}, and Cauchy's inequality to arrive at the following 
	\begin{align*}
	\sum_{N\geq 1} N^{2s-2}
	\| T_{\dl,\gamma} P_{N} \dx u_{\g}) \|_{L^{2}_{T,x}} 
	\| P_{N} w \|_{L^{2}_{T,x}} 
	\les C
	\Big(  \frac{1}{\dl}+\frac{1}{\gamma}  \Big)^2   \|u_{\g}\|^2_{L^{\infty}_{T} H^s_{x}}+
	c\|w\|_{L^{\infty}_{T} H^{s-1}_{x}  }^2,
	\end{align*}

	\noi
	where
the small value of $c$ in the result is a consequence of the application of Cauchy's inequality. 
The nonlinear interaction on the right-hand side of \eqref{L1} is precisely given by Proposition \ref{PRO:EN2}. 
In particular,
see \eqref{eq:dfmain1} for the relevant discussion (and to \cite[(3-25),(3-31)]{MV15} for the corresponding discussion of ILW).
In particular, any $k \geq  1$ we have 
	\begin{equation}
	\label{eq:dfmain}
I_{k,i}^{t,\dl,\g} \le  T^{\frac14} 	C(\|u_{\g}\|_{M^{s,\dl}_T}+\| u_{\dl} \|_{M^{s,\dl}_T})
	\|w\|_{M^{s-1,\dl}_T}
	\|w\|_{L_T^\infty H^{s-1}_x}.
	\end{equation}

\noi
Hence, it can be inferred that \eqref{L1} is bounded as indicated below
	\begin{equation}
	\begin{split}
	\|w(t)\|^{2}_{L^\infty_T H^{s-1}_{x}}
	&\leq C
	\Big(\frac{1}{\dl}+\frac{1}{\gamma}   \Big)^{2}  \|u_{\g}\|^{2}_{L^\infty_T H_x^s}+
	c   \|w(t)\|^{2}_{L^\infty_T H_x^{s-1}}\\
	&\quad+
	T^{\frac14}
	C(\|u_{\g}\|_{M^{s,\dl}_T}+\| u_{\dl} \|_{M^{s,\dl}_T})
	\|w\|_{M^{s-1,\dl}_T}
	\|w\|_{L_T^\infty H_x^{s-1}}.
	\end{split}
	\label{Ldif}
	\end{equation}

	\noi
	Now,
	 Lemmas \ref{lem1} and  \ref{LEM:ca1b} imply that
	for any $2\leq \dl,\g \leq\infty$,
	\begin{align}
	\label{eq:dfa}
	\begin{aligned}
	\|u_{\g}\|_{M^{s,\dl}_T}+\| u_{\dl} \|_{M^{s,\dl}_T}
	&\les
	\|u_{\g}\|_{L^{\infty}_{T} H^{s}_{x}  } +C(\|u_{\g}\|_{L^{\infty}_{T,x} }   )
	\|u_{\g} \|_{L^{\infty}_{T} H^{s}_{x} }\\
	&\quad+ \|u_{\dl}\|_{L^{\infty}_{T} H^{s}_{x}  } +C(\|u_{\dl}\|_{L^{\infty}_{T,x} }   )
	\|u_{\dl} \|_{L^{\infty}_{T} H^{s}_{x} }.
	\end{aligned}
	\end{align}

	\noi
	Next, we also need to estimate $\|w\|_{M^{s-1,\dl}_T}$ in \eqref{Ldif}, which follows from Lemma \ref{LEM:ca1c} and $w(x,0)=0$ that
	\begin{align}
	\label{eq:dfb}
		\begin{aligned}
	\| w\|_{M_{T}^{s-1,\dl}}
	&\les C(\|u_{\g}\|_{L^{\infty}_{T} H^{s}_{x} }  ,\|u_{\g}\|_{L^{\infty}_{T} H^{s}_{x} } ) 
\|w \|_{ L^{\infty}_{T} H^{s-1}_{x}    }\\
&\quad	+ C (\dl^{-1}+\g^{-1})\| u_{\g}\|_{L_{T}^{\infty}H^{s-1}_{x} }
	\end{aligned}
	\end{align}

\noi
Moreover, 
according to Proposition \ref{PRO:LWP2}, for any value of $s \geq \frac{3}{4}$, 
$ \{u_\dl\}_{\dl\geq 1} \subset C([0,T];H^s(\T)) $
 is bounded independently of $\dl$.
 Moreover, time $T$ depends only on the initial data.
To elaborate,
 using equation \eqref{eq:LWP2}, we can deduce that there exists a universal constant $M$ such that the aforementioned set is uniformly bounded
	\begin{equation}
	\label{eq:l1}
	\|u_\dl\|_{C_T H^s_x}\les \|u_0\|_{H^s}< M.
	\end{equation}

	\noi
Consequently, based on equations \eqref{eq:dfa}, \eqref{eq:dfb}, and \eqref{eq:l1}, it can be inferred that there exists a universal constant $C>0$ that satisfies the the following 
 	\begin{align}
		\begin{aligned}
	\label{eq:dfc}
	\| w\|_{M_{T}^{s-1,\dl}}+
		\|u_{\g}\|_{M^{s,\dl}_T}+\| u_{\dl} \|_{M^{s,\dl}_T}
	&\leq
	C+
C\|w \|_{ C_T H^{s-1}_{x}    }\\
	&\quad + C (\dl^{-1}+\g^{-1})\| u_{\g}\|_{C_T   H^{s-1}_{x} }.
	\end{aligned}
	\end{align}

	\noi
Therefore, by combining equations \eqref{eq:dfc}, \eqref{eq:l1}, and applying Cauchy's inequality to \eqref{Ldif}, we obtain the desired result,
	\begin{align*}
	\|w(t)\|^2_{C_T H^{s-1}_x}
	&\leq C
	\Big( \frac{1}{\dl}+ \frac{1}{\gamma} \Big)^2 M^{2}+
	c   \|w(t)\|_{C_T H_x^{s-1}}^2\\
	&\quad+
	C  T^{\frac14}
	\|w(t)\|^2_{C_T  H_x^{s-1}}
	+\frac{T^{\frac12}    M^{2} }{2}  \Big( \frac{1}{\dl}+\frac{1}{\gamma}   \Big)^{2} + \frac12 \|w(t)\|^{2}_{C_T  H_x^{s-1}} \\
	&\les \Big(  \frac{1}{\dl}+\frac{1}{\gamma}   \Big)^{2},
	\end{align*}

\noi
which shows \eqref{pri0}.
Next, we proceed to demonstrate that for $s \geq \frac{3}{4}$, the set $\{ u_{\dl} \}_{\dl\geq2} \subset C([0,T];H^{s}(\T))$ is indeed a Cauchy sequence. 
To do so, we apply the triangle inequality and write the following
	\begin{equation*}
	\begin{split}
	\|u_\dl-u_{\gamma}\|_{C_TH^s_x}
	&\les
	\|u_\dl - P_{\leq K} u_\dl  \|_{C_TH_x^s}
	+
	\|P_{\leq K}   u_\dl   -  P_{\leq K}    u_{\gamma}   \|_{C_TH_x^s}\\
	&\quad
	+\| P_{\leq K}    u_\gamma - u_\gamma\|_{C_TH_x^s}
	\end{split}
	\end{equation*}

	\noi
	Let $\eta>0$,
	then there exists $K_{0}$ such that
	for $K\geq K_0$
	we have
	the following:
	\begin{equation*}
	\|u_\dl  -  P_{\leq K}    u_\dl   \|_{C_TH_x^s}+
	\| P_{\leq K}   u_\gamma  - u_  \gamma\|_{C_TH_x^s}
	<\frac{2\eta}{3}.
	\end{equation*}

	\noi
	Notice that \eqref{pri0}
	implies that for all $\dl,\gamma$ 
	such that $2 \leq \dl \leq  \gamma\leq \infty$, 
	there exists a constant 
	$C= C( \|u_0\|_{H^s})$  independent of $\dl$ and $\gamma$ such that
	for any $K$,
	\begin{align*}
	\|P_{\leq K}(u_\dl)-P_{\leq K}(u_\gamma)\|_{C_TH_x^s}\leq
	2K
	\|u_\dl - u_\gamma\|_{C_T H^{s-1}_x}
	<\frac{CK}{\dl}
	\end{align*}

	\noi
	provided that 
	$w(x,0)=0$.
	Now, we choose $K=\dl^{\frac12}$,
	so that 
	as $\dl \to \infty $
	\begin{equation*}
	\|u_\dl - u_\gamma\|_{C_TH^s_x}
	<\eta.
	\end{equation*}

	\noi
	As $\eta$ is arbitrary, hence we finish the proof for $k\geq 2$ and $s\geq \frac34$.

When $k = 2$ and $s>\frac12$, we follow the came strategy as above.
As we saw above, equation  \eqref{L1} will appear to be the same and linear perturbation can be done in the same way.
Then, we replace the nonlinear perturbation for the $k=2$ case as in Proposition \ref{PRO:EN2}.

\end{proof}

To conclude the proof of Theorem \ref{THM:1}.
 By invoking Proposition \ref{LM:ca1}, there exists a function $u\in C([0,T];H^s(\T))$ such that as $\dl$ approaches infinity, $u_\dl$ converges to $u$ in the space $C([0,T];H^s(\T))$. Our goal now is to demonstrate that $u$ is indeed a solution to the gBO equation.
We observe from \eqref{defq3} that for any value of $\dl \geq 2$, the following inequality holds:
\begin{equation}
\| \cK_\dl u_\dl \|_{C_{T}H_{x}^{s}}\leq \frac{2}{\dl}\|u_\dl\|_{C_{T}H_{x}^s}\leq \frac{c}{\dl},
\label{PE1bb}
\end{equation}

\noi
for some universal constant $c$.
Thus, it becomes evident that $u$ is indeed the solution to the gBO equation with the initial data $u_0$. This is because we have established that the gILW equation can be represented as a perturbed gBO equation.
\begin{equation*}
\partial_t u_\dl +\mathcal H (\partial_x^2 u_\dl )+\dx (u_\dl^k) +
\cK_\dl( \dx u_\dl)
=0 .
\end{equation*}

\noi
Due to the almost everywhere convergence of the linear part, the following convergence is achieved as $\dl\to \infty$:
\begin{align*}
\dt {u_\dl} + \mathcal H \dx^2   {u_{\dl}}  + \dx (u_\dl^k)
\ 
\xrightarrow{  \mathcal D' }  
\  \dt u  + \mathcal H \dx^2 u + \dx u^k,
\end{align*}

\noi
i.e. convergent in the distributional sense. 
Furthermore, as indicated in equation \eqref{PE1bb}, $\cK_\dl(\dx u_\dl)$ vanishes as $\dl\to \infty$.
 As a result, it can be concluded that $u\in C([0,T];H^s(\T))$ is a solution to the gBO equation.

\subsection{The shallow-water limit}
\label{SUB:limit2}

In this subsection, we aim to compare the solutions of the gILW equation to those of the gKdV equation as the limit $\dl \to 0$ is approached. As was discussed in subsection \ref{SUB:LE}, a rescaling of the gILW equation is necessary, which is given by:
\begin{equation}
\label{gILWs}
\dt v_\dl -  \frac{3}{  \dl} \G   (\dx^2 v_\dl) = \dx(v_\dl^k).
\end{equation}

\noi
It is worth mentioning from Lemma \ref{LEM:p1d} that
	\begin{align}
	\label{sys1}
\frac{3}{  \dl}	\ft{  \G \dx }(n)=\frac{3}{  \dl} 
(	n \coth( \dl n)-	\frac{1}{\dl} )=
  n^2
	-  n^2 \frac{h(n,\dl)}{\dl} ,
	\end{align}

\noi
where $h(n,\dl)$ is a bounded function that approaches $O(\dl^3)$ as $\dl \to 0$, uniformly for all values of $n$ in any bounded set of $\R$. This property is discussed in more detail in Remark \ref{RM:p1}.

In the shallow-water limit, one of the key challenges is that uniform convergence (with respect to frequency $n$) in $\dl$ is not guaranteed, as opposed to the deep-water case. To address this, we need to perform a frequency truncation argument, which can be explained as follows:
Our goal is to prove that, given $\eps > 0$, $\| v_\dl- v_\g\|_{C_{T}H^{s}} \les \eps$. This will break into two steps:
(i) Show that, given $\eps > 0$, there exists $N = N(\eps)$ such that $\| v_\dl - v_{\dl, N}\|_{C_{T} H^{s} } \les \eps$, uniformly in $0< \dl < 1$. 
 Here, $v_{\dl, N}$ is a solution to the truncated equation.  Thus, in estimating the difference of the nonlinearities, we have the difference between the low-frequency part and also the high-frequency part of nonlinearity $f(v_\dl)=(u_\dl)^k$, where the latter is to be controlled uniformly in $\dl$ and shown to be less than  $\eps$ for large $N$ (via the Koch-Tzvetkov argument as discussed in Remark \ref{RM:cts}).
(ii) Show that with the frequency truncation parameter $N$ as above, there exists $\dl_0 > 0$ such that $\| v_{\dl, N} - v_{\g, N}\|_{H^{s}(\T)} \les \eps$ for any $0 < \dl, \g < \dl_0$.

For any values of $0<\dl,\gamma\ll 1$, let $v_\dl$ and $v_\gamma$ be two solutions of the scaled ${\rm gILW}_{\dl}$ and scaled $\textrm{gILW}_{\g}$ equations, respectively, in the form of \eqref{gILWs}, with the same initial data. Then, the difference between these two solutions, $w = v_{\dl} - v_{\g}$, satisfies the following equation:
\begin{equation}
\label{ILW6}
\begin{cases}
\dt w + \dx^3 w 
+H_\dl( \dx w)
=(H_\gamma-H_\dl)\dx v_{\g}
- \dx (v_{\dl}^k-v_{\g}^k )  \\
w(x,0) = 0,
\end{cases}
\end{equation}

\noi
where the Fourier multiplier defines $H_\dl$:
\begin{equation}
\label{syH}
\ft H_\dl(n):= -{n^2}  \frac{h(n,\dl)}{\dl}
\end{equation}

\noi
Moreover, for convenience, we denote
\begin{equation}
\label{syL}
L_{\dl, \gamma}=H_\gamma-H_\dl.
\end{equation}

\noi
In order to fully utilize our findings on $h(n,\dl)$, we consider the following frequency truncated scaled gILW equation, where a frequency truncation is applied both to the nonlinearity $f(u)=u^k$ and the initial data:
\begin{equation}
\label{Tcay}
\begin{cases}
\dt   v_{\dl,K}   -  \frac{3}{ \dl}  \G   \dx^2   v_{\dl,K}=   	\dx  (f_{K}( v_{\dl,K} ) ), \\
v_{\dl,K}|_{t = 0} = v_{0,K}.
\end{cases}
\end{equation}

\noi
In the shallow-water limit, this leads to the frequency truncated gKdV equation
\begin{equation}
\label{TKDV}
\begin{cases}
\dt   v_{ K}   +    \dx^3   v_{ K}=   \dx  ( f_{K}( v_{ K} ) ), \\
v_{ K}|_{t = 0} = v_{0,K}.
\end{cases}
\end{equation}

\noi
The corresponding solution $v_{\dl,N},v_N$ are supported on frequency  ${|n| \le K}$.
 Additionally, we use $f_{K}=P_{\leq K} f$ to denote the frequency truncation applied to the nonlinearity. The first step in our analysis is the following proposition.
\begin{proposition}
\label{PRO:S1}	
	
Let $k\geq 2$, $s\geq \frac34$ and $K\in 2^{\ZP} $ to be fixed.
Assume that $v_{\dl,K}$ to be the solution of \eqref{Tcay} with initial data $v_{0,K}$.
Then,  for any $0<T<1$, we have that
\begin{align}
\label{eq:sh1}
 \| v_{\dl,K} -v_K \|_{C([0,T];H^s(\T))} \too 0
 \qquad \text{as} \quad \dl \to 0,
\end{align}

\noi
where $v_K$ is the solution of \eqref{TKDV} with initial data $v_{0,K}$.
Moreover, when $k = 2$ only the statement holds for $s > \frac12$.

\end{proposition}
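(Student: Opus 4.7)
The plan is to treat the frequency-truncated scaled gILW equation \eqref{Tcay} as an $O(\dl^2)$-perturbation of the frequency-truncated gKdV equation \eqref{TKDV}. Using \eqref{sys1}, one obtains the operator identity $\tfrac{3}{\dl}\G\dx^2 = -\dx^3 + H_\dl \dx$, with $H_\dl$ the Fourier multiplier from \eqref{syH}, so that $v_{\dl,K}$ solves
\begin{equation*}
\dt v_{\dl,K} + \dx^3 v_{\dl,K} \,=\, H_\dl \dx v_{\dl,K} + \dx f_K(v_{\dl,K}).
\end{equation*}
Subtracting \eqref{TKDV}, the difference $w := v_{\dl,K} - v_K$ satisfies the forced, perturbed gKdV equation
\begin{equation*}
\dt w + \dx^3 w \,=\, H_\dl \dx v_{\dl,K} \,+\, \dx\bigl[f_K(v_{\dl,K}) - f_K(v_K)\bigr], \qquad w(0)=0,
\end{equation*}
and the strategy is to close a Gronwall-type $H^s$-energy estimate on this equation, with the linear forcing $H_\dl \dx v_{\dl,K}$ supplying the $O(\dl^2)$ smallness that drives the convergence.

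The first ingredient is a uniform-in-$\dl$ a priori bound on $v_{\dl,K}$ and $v_K$. The truncated flows preserve the spectral support $\{|n|\leq K\}$, and since $\int v\,\dx f_K(v)\,dx = \int v\,\dx(v^k)\,dx = 0$ whenever $v = P_{\leq K}v$, combined with the skew-adjointness of the respective dispersive operators, the $L^2$-norm is conserved along both \eqref{Tcay} and \eqref{TKDV}. Bernstein's inequality $\|v\|_{H^s} \leq \jb{K}^s \|v\|_{L^2}$ on the band $\{|n|\leq K\}$ then yields $\|v_{\dl,K}\|_{L^\infty_t H^s} + \|v_K\|_{L^\infty_t H^s} \leq M_K$ independently of $\dl$, granting global existence on any time interval. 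The decisive decay is then provided by Lemma \ref{LEM:p1d}(i): on the bounded band $\{|n|\leq K\}$ one has $|\widehat{H_\dl}(n)| = n^2 |h(n,\dl)|/\dl \lesssim_K \dl^2$, and hence
\begin{equation*}
\|H_\dl \dx v_{\dl,K}\|_{H^s} \,\lesssim_K\, \dl^2\, \|v_{\dl,K}\|_{H^s} \,\lesssim_K\, \dl^2.
\end{equation*}

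Taking the $H^s$-inner product of the difference equation with $w$, the dispersive contribution vanishes by the skew-adjointness of $\dx^3$. The nonlinear term is treated via the factorization $f_K(v_{\dl,K}) - f_K(v_K) = P_{\leq K}\bigl[w \cdot Q(v_{\dl,K},v_K)\bigr]$ with $Q$ a polynomial of degree $k-1$, yielding a bound $\lesssim_K C(M_K)\,\|w\|_{H^s}^2$; the forcing contribution is absorbed using Cauchy--Schwarz and Young's inequality. Altogether,
\begin{equation*}
\frac{d}{dt}\|w\|_{H^s}^2 \,\leq\, C_K(M_K)\,\|w\|_{H^s}^2 + C_K(M_K)\,\dl^4,
\end{equation*}
and Gronwall's inequality with $w(0)=0$ delivers $\|w\|_{C([0,T];H^s)}^2 \lesssim_{K,T} \dl^4 \to 0$ as $\dl \to 0$, which is \eqref{eq:sh1}. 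The argument is routine precisely because the frequency cutoff localizes us to the band on which Lemma \ref{LEM:p1d}(i) converts the pointwise decay $h(n,\dl)/\dl = O(\dl^2)$ into a genuine operator-norm bound; the main obstacle this truncated statement is designed to sidestep is the absence of uniform-in-$n$ convergence $\tfrac{3}{\dl}\G\dx^2 \to -\dx^3$ at high frequencies, which is the very reason Proposition~\ref{PRO:S1} is only an intermediate step toward Theorem \ref{THM:2} and must be combined with the frequency-truncation/Koch--Tzvetkov tail estimate outlined in Subsection \ref{SUB:limit2}. The adaptation to $k=2$, $s>\tfrac12$ requires no modification beyond replacing the nonlinear bound with its bilinear counterpart.
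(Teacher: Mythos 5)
Your proposal is correct, but it takes a genuinely different and more elementary route than the paper. The paper proves Proposition~\ref{PRO:S1} by showing that $\{v_{\dl,K}\}_{\dl>0}$ is Cauchy in $C_TH^{s-1}$ as $\dl\to 0$: it runs the full perturbative machinery on the difference equation \eqref{Tcay2} for $w_K=v_{\g,K}-v_{\dl,K}$, invoking the $N^{s,\dl}_T$-norms, the difference energy estimate of Proposition~\ref{PRO:EN2} (via \eqref{eq:dfmain}), and the auxiliary Lemmas~\ref{LEM:ca3b} and~\ref{LEM:ca3c} to transfer bounds between the $\dl$- and $\g$-adapted Bourgain spaces; it then upgrades to $H^s$ by paying a factor $2K$ on the band, and finally identifies the limit with $v_K$ by passing to the limit in the equation and invoking uniqueness for \eqref{TKDV}. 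You instead compare $v_{\dl,K}$ directly with $v_K$ and close a crude Gronwall estimate in $H^s$, exploiting two facts the paper does not use here: (i) the truncated flows conserve the $L^2$-norm (the band-limitation makes $\int v\,\dx f_K(v)\,dx=0$ and both dispersive operators are skew-adjoint), so Bernstein gives a $\dl$-independent, global-in-time $H^s$ bound $M_K$ without appealing to Proposition~\ref{PRO:LWP2}; and (ii) on the fixed band $\{|n|\le K\}$ every derivative costs at most $K$, so no dispersive or $X^{s,b}$-type analysis is needed at all and the regularity thresholds $s\ge\frac34$ (resp.\ $s>\frac12$) play no role. Your decomposition $\tfrac{3}{\dl}\G\dx^2=-\dx^3+H_\dl\dx$ checks out exactly against \eqref{sys1} and \eqref{syH}, and Lemma~\ref{LEM:p1d}(i) does convert $h(n,\dl)/\dl=O(\dl^2)$ into the operator bound $\|H_\dl\dx v_{\dl,K}\|_{H^s}\les_K\dl^2$ on the band. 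What your approach buys is brevity, a quantitative rate $O(\dl^2)$ (with constants blowing up in $K$), and no need for the Cauchy-sequence detour or the limit-identification step; what the paper's approach buys is consistency with the deep-water argument and reuse of estimates already established, at the cost of considerably more overhead for what is, after truncation, an essentially finite-dimensional problem. Both correctly leave the uniform-in-$\dl$ removal of the truncation to the surrounding argument of Subsection~\ref{SUB:limit2}.
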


To prove Proposition \ref{PRO:S1}, we will use the following auxiliary lemmas.

\begin{lemma}
	\label{LEM:ca3b}
	Let $k\geq 2$, 
	and   $0<T<1$.
	Assume that $s>\frac12$ and $   v_{\g}\in L^\infty([0,T];H^{s}(\T))$	 is a solution to scaled  
	 gILW$_{\g}$ \eqref{Tcay} associated with   initial data $v_0 \in H^s(\T)$
	Then, for any fixed $K\in 2^{\ZP} $, there exists $0<\dl_{0}\leq 1$ such that for any $0<\dl, \g<\dl_{0}$ we have
	\begin{align}
	\|   v_{\g,K} \|_{N_{T}^{s,\dl}}
	\les \|  v_{\g,K} \|_{N^{s,\g}_{T}},
	\label{256a}
	\end{align}

	\noi
	where
	the implicit constant only depends on $T,K$, uniform for all $0<\dl,\g<\dl_{0}$.
 	Moreover, the following estimate holds:
	\begin{align}
	\|   v_{\g,K} \|_{N_{T}^{s,\dl}}
	\les \|     v_{\g,K}\|_{L^{\infty}_{T} H^{s}_{x}  } +C(\|     v_{\g,K}\|_{L^{\infty}_{T,x} }   )
	\|   v_{\g,K} \|_{L^{\infty}_{T} H^{s}_{x} } .
	\label{256b}
	\end{align}

\end{lemma}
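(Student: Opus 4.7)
The approach parallels the deep-water Lemma~\ref{LEM:ca1b}, but a new obstacle appears: the discrepancy between the shallow-water symbols $\pshallow_{\dl}(n)$ and $\pshallow_{\g}(n)$ is a priori of order $|n|^{3}$ rather than merely $|n|$, so one cannot hope for an unconditional comparison of $Y^{s-1,1,\dl}$ and $Y^{s-1,1,\g}$. The plan is to exploit the frequency cutoff $K$ together with the pointwise decay $h(n,\dl)/\dl = O_{K}(\dl^{2})$ on bounded frequencies, as afforded by Lemma~\ref{LEM:p1d}(i).

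First, I would compare the modulation symbols. Using \eqref{sys1} and the identity $\pshallow_{\dl}(n)=n\cdot\frac{3}{\dl}K_{\dl}(n)$, I obtain
\begin{align*}
\pshallow_{\g}(n)-\pshallow_{\dl}(n)=n^{3}\Bigl(\frac{h(n,\dl)}{\dl}-\frac{h(n,\g)}{\g}\Bigr).
\end{align*}
A priori this is of size $|n|^{3}$, but on the truncated set $\{|n|\le K\}$ Lemma~\ref{LEM:p1d}(i) gives $|h(n,\dl)/\dl|\les_{K}\dl^{2}$ and likewise for $\g$, hence $|\pshallow_{\g}(n)-\pshallow_{\dl}(n)|\les_{K} K^{3}(\dl^{2}+\g^{2})$ on this set. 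Choosing $\dl_{0}=\dl_{0}(K)\in(0,1]$ so small that this bound is at most $1$ for all $0<\dl,\g<\dl_{0}$, I get the pointwise estimate
\begin{align*}
\jb{\tau-\pshallow_{\dl}(n)}\les \jb{\tau-\pshallow_{\g}(n)}+1\les \jb{\tau-\pshallow_{\g}(n)}
\end{align*}
valid for all $|n|\le K$. Since $v_{\g,K}$ is Fourier-supported on $\{|n|\le K\}$, multiplying by $\jb{n}^{s-1}$, squaring, integrating in $\tau$ and summing in $n$ transfers the pointwise inequality to the norm bound $\|v_{\g,K}\|_{Y^{s-1,1,\dl}_{T}}\les \|v_{\g,K}\|_{Y^{s-1,1,\g}_{T}}$; combined with the trivial equality for the $L^{\infty}_{t}H^{s}$ component of $N^{s,\dl}$, this yields \eqref{256a}.

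For \eqref{256b}, I would invoke \eqref{256a} to reduce to controlling $\|v_{\g,K}\|_{N^{s,\g}_{T}}$ by the stated right-hand side. Since $v_{\g,K}$ solves the truncated equation \eqref{Tcay}, it satisfies the associated Duhamel formula with nonlinearity $\dx P_{\le K}(v_{\g,K}^{k})$, and the standard $X^{s,b}$-type analysis underlying Lemma~\ref{lem1}(ii) (with $\om\equiv 1$) gives
\begin{align*}
\|v_{\g,K}\|_{Y^{s-1,1,\g}_{T}}\les \|v_{\g,K}\|_{L^{\infty}_{T}H^{s}_{x}}+\|\dx P_{\le K}(v_{\g,K}^{k})\|_{L^{2}_{T}H^{s-1}_{x}}.
\end{align*}
The nonlinear term is controlled by a Moser-type estimate (using that $H^{s}$ is an algebra for $s>\tfrac12$ and that $P_{\le K}$ is bounded on $H^{s-1}$) by $C(\|v_{\g,K}\|_{L^{\infty}_{T,x}})\|v_{\g,K}\|_{L^{\infty}_{T}H^{s}_{x}}$, which establishes \eqref{256b}.

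The main obstacle is the cubic-order dispersion discrepancy appearing in the first step: unlike the deep-water setting of Lemma~\ref{LEM:ca1b}, where the symbol difference is $O(|n|)$ and can be absorbed by trading one degree of $\jb{n}$, here no $X^{s,b}$-type gain suffices to swallow the $|n|^{3}$ loss unconditionally. The frequency truncation $K$ combined with the $O(\dl^{2})$ decay of $h(n,\dl)/\dl$ on bounded frequencies from Lemma~\ref{LEM:p1d}(i) is precisely what neutralises this obstacle, at the price of $\dl_{0}$ depending on $K$.
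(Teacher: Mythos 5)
Your proposal is correct and follows essentially the same route as the paper: both hinge on comparing the modulation symbols via $\pshallow_{\g}(n)-\pshallow_{\dl}(n)=n^{3}\bigl(\tfrac{h(n,\dl)}{\dl}-\tfrac{h(n,\g)}{\g}\bigr)$, using the $O_K(\dl^{2})$ decay of $h(n,\dl)/\dl$ on $\{|n|\le K\}$ from Lemma~\ref{LEM:p1d} to neutralise the cubic loss, and then deducing \eqref{256b} from the Duhamel formulation as in Lemma~\ref{lem1}. The only cosmetic difference is that you absorb the symbol discrepancy into an $O(1)$ constant by shrinking $\dl_{0}$, whereas the paper carries it as an $O(\dl^{2})K^{2}\|v_{\g,K}\|_{Y^{s,0}_{T}}$ error term before making it small; the two are equivalent.
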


\begin{proof}
	
	Let us recall the definition of $N^{s,\g}_{T}$-space and symbol of scaled gILW \eqref{gILWs}:
	\begin{align}
	\label{Nspace}
	N^{s,\g}_{T}=L^{\infty}_{T}H^{s}_{x}\cap Y^{s-1,1,\g}_{T}
	\qquad
	\textrm{and}
	\qquad
	\pshallow_{\dl}(n) = n^{3}+ n^{3}\frac{h(n,\dl)}{\dl}
	\end{align}

	\noi
Now, by using the definition of $	\pshallow_{\dl}(n)$ we write the following 
	\begin{align}
	\label{ub4a}
	\begin{aligned}
	\tau -    \pshallow_{\dl}(n) &=\tau -   \pshallow_{\g}(n) - \Big( \pshallow_{\dl} -  \pshallow_{\g}(n) \Big)\\
	&=\tau - \pshallow_{\g}(n)  - \Big(n^{3} \frac{h(n,\dl)}{\dl} - n^{3} \frac{h(n,\g)}{\g} \Big).
	\end{aligned}
	\end{align}

	\noi
The function $h(n,\dl)$ is defined in Lemma \ref{LEM:p1d}.
%
	Moreover, 
	$h(n,\dl)$ has a nice decay in $\dl$, provided 
  $n$ in any bounded set of $ \R$.
	In particular, under the assumption that $n\leq K$, we have 
	\begin{align}
	\frac{h(n,\dl)}{\dl} =O(\dl^{2})
	\qquad
	\text{as } 
	\quad
	\dl\to 0.
	\label{ub4}
	\end{align}

	\noi
	Let  $n\leq K$.
	Then, from \eqref{ub4a}, decay of $h(n,\dl)$, and \eqref{ub4}, we have
	\begin{align}
	\label{uu11}
	\jb{ \tau -  \pshallow_{\dl}(n) } \les 
	\jb{\tau -  \pshallow_{\g}(n) }+   O(\dl^{2}) K^{3}
	\qquad
	\text{as}
	\quad
	\dl\to0.
	\end{align}

	\noi
	Therefore, 	for any $ 0<\g,\dl<1$ and
	 using \eqref{uu11}, we obtain the following control in $Y^{s,b,\dl}$-norm:
	\begin{align}
	\begin{aligned}
	\|   v_{\g,K} \|_{Y^{s-1,1,\dl}_{T}  }
	&\les \|   v_{\g,K } \|_{Y^{s-1,1,\g}_{T}  }
	+  O(\dl^{2}) K^{2} \|   v_{\g,K} \|_{Y^{s,0}_{T}  }\\
	&\les  \|  v_{\g,K} \|_{Y^{s-1,1,\g}_{T}  }
	+ O(\dl^{2})  K^{2}    \|   v_{\g,K} \|_{ L^{\infty}_{T} H^{s}_{x}  },
	\end{aligned}
	\label{ub4b}
	\end{align}
	
	\noi
	as
	$\dl\to0$.
	Here, we notice the fact that
	given  $0<\eps\ll1$, for any fixed $K>0$, there exists $\dl_{0}$ such that for any $ 0< \g,\dl< \dl_{0}$,
	we have
	\begin{align}
	\label{suph}
	\sup_{|n|\leq K} K^{2} \frac{h(n,\dl)}{\dl}
	=O(\dl^{2})  K^{2} <\eps 
	\qquad \text{as} \quad\dl\to0.
	\end{align}

	\noi
	Then, \eqref{ub4b}, \eqref{Nspace}, and \eqref{suph} give
	\begin{align}
	\begin{aligned}
	\|   v_{\g,K} \|_{N^{s,\dl} _{T}} &=
	\|  v_{\g,K} \|_{Y^{s-1,1,\dl}_{T}  } +     \|  v_{\g,K} \|_{ L^{\infty}_{T} H^{s}_{x}  }  \\
	&\les    \|    v_{\g,K}   \|_{Y^{s-1,1,\g}_{T}  } + \|  v_{\g,K}  \|_{ L^{\infty}_{T} H^{s}_{x}  } \les \|   v_{\g,K}  \|_{N^{s,\g}_{T}} 
	\end{aligned}
	\label{ub4b1}
	\end{align}

	\noi
	for any $ 0< \g,\dl< \dl_{0}$. 
	This gives \eqref{256a}.
	Moreover, we follow the same $X^{s,b}$-analysis as in Lemma \ref{lem1},
for $  v_{\g,K}$ satisfies the Duhamel formulation of  scaled gILW \eqref{Tcay}
	and we have the following
	\begin{align}
	\begin{aligned}
	\|  v_{\g,K} \|_{Y^{s-1,1,\g}_{T}  }
	\les\|  v_{\g,K} \|_{ L^{\infty}_{T} H^{s}_{x}  }
	+C(\|   v_{\g,K}  \|_{L^{\infty}_{T,x} })
	\|  v_{\g,K}  \|_{ L^{\infty}_{T} H^{s}_{x}  }
	\end{aligned}
	\label{ub4b2}
	\end{align}

	\noi
	Now, from \eqref{ub4b1} and \eqref{ub4b2},
	we can conclude that for any $0<\g,\dl<\dl_{0}$, 
	the following is true
	\begin{align*}
	\|  v_{\g,K} \|_{ N^{s,\dl}_{T}   } \les  \|  v_{\g,K}  \|_{N^{s,\g}_{T}} \les \|  v_{\g,K}   \|_{ L^{\infty}_{T} H^{s}_{x}  } +C(\|  v_{\g,K}   \|_{L^{\infty}_{T,x} })
	\|  v_{\g,K}   \|_{ L^{\infty}_{T} H^{s}_{x}  }.
	\end{align*}

\end{proof}

We also need the following lemma to deal with the difference between the two solutions.

\begin{lemma}
	\label{LEM:ca3c}
	Let $k\geq 2$ and $0<T<1$.
Assume that $s>\frac 12$ and $ v_{\dl},	v_{\g} \in L^\infty([0,T];H^{s}(\T))$
are  solutions to scaled  gILW \eqref{Tcay} 
associated with  initial data $v_{0,\dl}, v_{0,\g} \in H^s(\T)$, respectively.
	Then,  for any fixed
$K\in 2^{\ZP}$ and any $0<\dl, \g<1$, the following holds 
\begin{align}
	\label{257a}
	\begin{aligned}
	\|  w_K \|_{N_{T}^{s-1,\dl}}
	&\les 
	\|     w_K  \|_{ L^{\infty}_{T} H^{s-1}  } +
	O(\dl^{2}) K^{2} \|    v_{\g,K } \|_{L^\infty_T H_x^{s}}
	\\
	&\quad+
	C(\|   v_{\dl,K}\|_{L^{\infty}_{T,x} }   +\|  v_{\g,K}\|_{L^{\infty}_{T,x} } )
	\|   w_K \|_{ L^{\infty}_{T} H^{s-1}_{x}    }
\end{aligned}
\end{align}
	
\noi
as $\dl\to0$, and where $ w_K =   v_{\g,K} -  v_{\dl,K} $.
\end{lemma}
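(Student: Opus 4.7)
My plan mirrors the deep-water argument in Lemma~\ref{LEM:ca1c}, with the key new ingredient being the exploitation of the decay $h(n,\dl)/\dl = O(\dl^2)$ on the truncated frequency range $|n|\le K$ from Lemma~\ref{LEM:p1d} and Remark~\ref{RM:p1}. Subtracting the truncated equations \eqref{Tcay} for $v_{\g,K}$ and $v_{\dl,K}$, the difference $w_K$ satisfies
\begin{equation*}
\dt w_K - \frac{3}{\dl}\G \dx^2 w_K = \dx\bigl(f_K(v_{\g,K}) - f_K(v_{\dl,K})\bigr) + \mathcal{L}_{\dl,\g}\, v_{\g,K},
\end{equation*}
where the dispersive perturbation $\mathcal{L}_{\dl,\g} = \bigl(\frac{3}{\dl}\G - \frac{3}{\g}\G\bigr)\dx^2$ has, by \eqref{Nspace}, the Fourier symbol
\begin{equation*}
\ft{\mathcal{L}_{\dl,\g}}(n) = \pshallow_\g(n) - \pshallow_\dl(n) = n^3\Bigl(\frac{h(n,\g)}{\g} - \frac{h(n,\dl)}{\dl}\Bigr).
\end{equation*}

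Since $N_T^{s-1,\dl} = L^\infty_T H^{s-1}_x \cap Y^{s-2,1,\dl}_T$ and the $L^\infty_T H^{s-1}_x$ component already appears on the right-hand side of \eqref{257a}, the real task is to bound $\|w_K\|_{Y^{s-2,1,\dl}_T}$. Running the Duhamel formulation for $w_K$ and the standard $X^{s,b}$-analysis from the proof of Lemma~\ref{lem1}, I would split
\begin{equation*}
\|w_K\|_{Y^{s-2,1,\dl}_T} \les \|w_K(0)\|_{H^{s-1}_x} + \|\dx(f_K(v_{\g,K})-f_K(v_{\dl,K}))\|_{L^2_T H^{s-2}_x} + \|\mathcal{L}_{\dl,\g}\, v_{\g,K}\|_{L^2_T H^{s-2}_x}.
\end{equation*}
The first two terms are treated exactly as in Lemma~\ref{LEM:ca1c}: the nonlinear difference is handled via the telescoping identity
\begin{equation*}
f_K(v_{\g,K})-f_K(v_{\dl,K}) = P_{\le K}\sum_{i=0}^{k-1} w_K\, v_{\g,K}^i\, v_{\dl,K}^{k-1-i},
\end{equation*}
together with the algebra/module property of $H^{s-1}$ for $s>\frac12$ and the $L^\infty$ bounds on $v_{\g,K}, v_{\dl,K}$, producing the contribution $C\bigl(\|v_{\g,K}\|_{L^\infty_{T,x}} + \|v_{\dl,K}\|_{L^\infty_{T,x}}\bigr)\|w_K\|_{L^\infty_T H^{s-1}_x}$.

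The main obstacle is the dispersive perturbation $\mathcal{L}_{\dl,\g}\, v_{\g,K}$, since the naive symbol bound is of order $n^3$ and cannot be absorbed by the equation itself. This is exactly where the frequency cutoff at level $K$ pays off: Lemma~\ref{LEM:p1d} and Remark~\ref{RM:p1} yield
\begin{equation*}
\sup_{|n|\le K}\Bigl|\frac{h(n,\dl)}{\dl}\Bigr| = O(\dl^2), \qquad \sup_{|n|\le K}\Bigl|\frac{h(n,\g)}{\g}\Bigr| = O(\g^2), \qquad \text{as } \dl,\g\to 0.
\end{equation*}
Combined with the elementary inequality $\jb{n}^{2(s-2)} n^6 \le K^4 \jb{n}^{2s}$ for $|n|\le K$, this gives
\begin{equation*}
\|\mathcal{L}_{\dl,\g}\, v_{\g,K}\|_{H^{s-2}_x}^2 = \sum_{|n|\le K}\jb{n}^{2(s-2)} n^6 \Bigl|\frac{h(n,\g)}{\g}-\frac{h(n,\dl)}{\dl}\Bigr|^2 |\ft{v_{\g,K}}(n)|^2 \les K^4 \cdot O(\dl^4)\, \|v_{\g,K}\|_{H^s_x}^2,
\end{equation*}
hence $\|\mathcal{L}_{\dl,\g}\, v_{\g,K}\|_{L^2_T H^{s-2}_x} \les T^{1/2}\, K^2\, O(\dl^2)\, \|v_{\g,K}\|_{L^\infty_T H^s_x}$. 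Assembling this with the nonlinear estimate and the trivial $L^\infty_T H^{s-1}_x$ contribution produces \eqref{257a}. The case $k=2$ with $s>\frac12$ follows along identical lines.
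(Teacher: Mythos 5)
Your proposal is correct and follows essentially the same route as the paper: subtract the two truncated equations, reduce via the Duhamel formulation to a bound on the $Y^{s-2,1,\dl}$-norm, handle the nonlinear difference by the telescoping identity together with the product estimate $H^s\cdot H^{s-1}\subset H^{s-1}$ for $s>\frac12$, and control the dispersive perturbation by exploiting $\sup_{|n|\le K}|h(n,\dl)/\dl|=O(\dl^2)$ from Lemma~\ref{LEM:p1d}, which is exactly the paper's estimate \eqref{Lest}. The only cosmetic differences are that your power of $K$ in the symbol bound is not sharp (the factor $\jb{n}^{-4}n^6\le K^2$ for $|n|\le K$, not $K^4$, though this is harmless since the lemma's statement already allows $K^2$) and that you absorb the $\dx$ into the perturbation operator rather than keeping it separate as the paper does.
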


\begin{proof}
Let us consider the difference between the two frequency truncated equations,
	the scaled $\textrm{gILW}_{\g}$  and scaled $\textrm{gILW}_{\dl}$, as given in equation \eqref{Tcay}.
Setting $w_K = v_{\g,K}- v_{\dl,K}$, we obtain the following difference equation:
	\begin{align}
	\label{eq:dfs2}
	\dt   w_{K} - \frac3\dl \G  \dx^{2}   w_{K} + L_{\dl,\g} (\dx    v_{\g,K}) =
 \dx f_{K}(  v_{\g,K}) - \dx f_{K}(  v_{\dl,K}) .
	\end{align}

 \noi
Additionally, it is worth noting that $w_K$ satisfies the Duhamel formulation of equation \eqref{eq:dfs2}. By following the same proof as in \cite[Lemma 3.1]{MV15} and considering the definition of $N^{s-1,\g}_{T}$ as $L^{\infty}_{T}H^{s-1}_{x}\cap Y^{s-2,1,\g}_{T}$, it suffices to estimate $w_K$ in the $Y^{s-2,1,\dl}$-norm. Hence, by using the Duhamel formulation, we proceed with the following 
	computation
	\begin{align}
	\begin{aligned}
	\| w_{K}   \|_{Y^{s-2,1,\dl}_T}
	& \les   \|    v_{0,\g,N} -   v _{0,\dl,N}   \|_{  H^{s-1}  } 
	+       \|     f_{K}(   v_{\g,K})   -    f_{K}(  v _{\dl,K} )   )  \|_{L^2_T H_x^{s-1}}\\
	& \quad+\|  ( H_\gamma-H_\dl  ) \dx    v_{\g,K} \|_{Y^{s-2,0,\dl}_T} \\
	&  \les   \|    w_{K} \|_{  L^{\infty}_{T}H^{s-1}_{x}  } 
	+
	(\|    v_{\dl,K} \|_{L^\infty_T H_x^{s}}     +   \|     v_{\g,K}  \|_{L^\infty_T H_x^{s}})     
	\|     w_{K}  \|_{L^\infty_T H_x^{s-1}}\\
	& \quad+  \|   L_{\dl,\g}    v_{\g,K} \|_{L^\infty_T H_x^{s-1}}  
	\end{aligned}
	\label{west1}
	\end{align}
	
	\noi
We recall the definitions of $H_{\dl}$ and $L_{\dl,\g}$ from equations \eqref{syH} and \eqref{syL}. From Lemma \ref{LEM:p1d}  we see the definition of $h(n,\dl)$ and  under the assumption that $n\leq K$, we can fully use its decay in  $\dl$ property. Hence, we obtain 
\eqref{ub4}. Hence, we obtain
	\begin{align}
	\label{Lest}
	\|   L_{\dl,\g}      v_{\g,K} \|_{L^\infty_T H_x^{s-1}}
	\les O(\dl^{2}) K  \|      v_{\g,K} \|_{L^\infty_T H_x^{s}}.
	\end{align}
	
	\noi
	as
	$\dl\to0$.
	Then,
	we substitute \eqref{Lest} into \eqref{west1} to have
	\begin{align*}
	\begin{aligned}
	\|  w_{K}   \|_{Y^{s-2,1,\dl}_T}
	&  \les  \|    w_{ K}  \|_{  L^{\infty}_{T}H^{s-1}_{x}  } 
	+ O(\dl^{2}) K\|    v_{\g,K } \|_{L^\infty_T H_x^{s}}\\
	&\quad+C(\|    v_{\dl,K} \|_{L^\infty_T H_x^{s}}    
	+   \|   v_{\g,K}  \|_{L^\infty_T H_x^{s}})     
	\|   w_{K}   \|_{L^\infty_T H_x^{s-1}}    \\
	\end{aligned}
	\end{align*}

	\noi
	as $\dl\to 0$. This finished the proof of \eqref{257a}.
	
\end{proof}

\begin{proof}[Proof of Proposition \ref{PRO:S1}]

Let $k\geq 2$, $s\geq \frac34$, $0<T<1$ and fix $K\in 2^{\ZP}$. We first show that for $v_{\dl}\in C([0,T];H^{s}(\T))$ being a solution of the scaled gILW \eqref{gILWs}, the the one-parameter family of solutions $\{ v_{\dl,K} \}_{\dl>0} $ is a Cauchy sequence in $C([0,T];H^{s-1}(\T))$ as $\dl\to 0$.

For any $0<\g<\dl<1$, let us set $w_K = v_{\g,K} - v_{\dl,K}$. Then, we consider the frequency truncated version of equation \eqref{ILW6}
\begin{align}
\label{Tcay2}
\begin{cases}
\dt  w_{K} + \dx^3   w_{K} 
+H_\dl( \dx   w_{K})
=  L_{\dl,\g}   (\dx   v_{\g,K})
-  
\dx (f_{K}(   v_{\dl,K}) - f_{K}(   v_{\g,K}) ) ,\\
w_{K}(x,0)=0.
\end{cases}
\end{align}

 \noi
We use a similar approach as in equation \eqref{L1} for the equation \eqref{Tcay2}:
	\begin{align}
	\label{L11}
	\begin{aligned}
	\| w_{K}(t)\|^2_{H^{s-1}_{x}}
	&\les
	\sum_{1\leq N\leq K} N^{2s-2}
	\| L_{\dl,\gamma}   (\dx P_{N} v_{\g,K}) \|_{L^{2}_{T,x}} 
	\|  P_{N} w_{K} \|_{L^{2}_{T,x}}\\
	\quad&+
	2 \sum_{k\geq 2}
	\max_{i\in \{0,..,k-1\}} I_{k,i,K}^{t,\dl,\g},
	\end{aligned}
	\end{align}

	\noi
	where $I_{k,i,K}^{t,\dl,\g} $ is now defined by
	\begin{align*}
	I_{k,i,K}^{t,\dl,\g}:=
	 \sum_{  1 \leq N\leq K}  
	N^{2(s-1)} \bigg|\int_0^t\int_\T  v_{\dl,K}^i  v_{\g,K}^{k-1-i} w_{K}  P_N^2  \dx w_{K}  dxdt'\bigg| .
	\end{align*}

	\noi
Regarding the first term on the right-hand-side of \eqref{L11}, we apply Cauchy's inequality to obtain:
	\begin{align*}
	\sum_{1\leq N\leq K} N^{2s-2}
	\| L_{\dl,\gamma}  ( \dx  P_{N} v_{\g,K}) \|_{L^{2}_{T,x}} 
	\|  P_{N} w_{K} \|_{L^{2}_{T,x}}
	\les
	c_{1} \|      L_{\dl,\g}   ( v_{\g,K} ) \|^2_{L^{\infty}_{T} H^s_{x}}+
	c_{2}\|w_{K}\|_{L^{\infty}_{T} H^{s-1}_{x}  }^2,
	\end{align*}

	\noi
where $c_{2}\ll1$ is a constant resulting from the application of Cauchy's inequality. 
Additionally, Proposition \ref{PRO:LWP2} states that for any $s\geq \frac34$, we have:
		\begin{align*}
		\{v_\dl\}_{\dl\geq 1} \subset C([0,T];H^s(\T))
			\end{align*}
	is bounded independent of $\dl$.
	In particular, there exists a universal constant $M$ such that
	\begin{equation}
	\label{lc1}
	\|   v_{\dl,K }\|_{C_{T} H^{s}_{x} } \leq 
	\|  v_{\dl }\|_{C_{T} H^{s}_{x} }\les \|v_0\|_{H^s}< M.
	\end{equation}

\noi
Additionally, with the frequency support condition $|n|<K$ in place, for any $0<\dl,\g<1$, we observe a favorable decay of $h(n,\dl)$ with respect to the depth parameter $\dl$. Specifically, we have:
	\begin{align}
	\label{suph2}
	\sup_{|n|\leq K} K^{2} \frac{h(n,\dl)}{\dl}
	=O(\dl^{2})  K^{2}
	\qquad
	\text{as}
	\quad
	\dl\to0.
	\end{align}

\noi
By using \eqref{lc1} and \eqref{suph2}, for any $\eps>0$, there exists $0<\dl_{0}<1$ such that for any $0<\dl,\g<\dl_{0}$, we have:
	\begin{equation}
	\label{L118}
	c_{1} \|      L_{\dl,\g} ( v_{\g,K})  \|^2_{C_{T} H^s_{x}}\les
	\Big\|   K^{2} \frac{h(\dl,n)}{\dl}      \ft v_{\dl,K}   \Big\|^2_{C_{T} H_{x}^{s}}
	<\eps.
	\end{equation}

	\noi
Regarding the second term on the right-hand side of \eqref{L11}, we proceed exactly as in \eqref{L1}, which can be controlled through \eqref{eq:dfmain}. Hence, by utilizing \eqref{L118} and \eqref{eq:dfmain} in \eqref{L11}, we have, for $c_{2}\ll1$:
	\begin{equation}
	\begin{split}
	\|  w_{K} \|^{2}_{C_T H^{s-1}_{x}}
	&\leq \eps +
	Tc_{2}\|   w_{K} \|^{2}_{C_T H_x^{s-1}}\\
	&\quad+
	T^{\frac14}
	C(\|  v_{\g,K }\|_{N^{s,\dl}_T}+\|   v_{\dl,K } \|_{N^{s,\dl}_T})
	\|  w_{K}  \|_{N^{s-1,\dl}_T}
	\|  w_{K}  \|_{C_T  H_x^{s-1}}.
	\end{split}
	\label{Ldif2}
	\end{equation}

	\noi
	From Lemmas \ref{lem1} and \ref{LEM:ca3b}, there exists $\dl_{0}$ such that
	for any $0< \dl,\g< \dl_{0}<1$,
	\begin{align}
	\begin{aligned}
	\|  v_{\g,K }\|_{N^{s,\dl}_T}+\|     v_{\dl ,K} \|_{N^{s,\dl}_T}
	&\leq
	C_{1} \| v_{\g,K }\|_{C_{T} H^{s}_{x}  } + \|  v_{\dl,K }\|_{N^{s,\dl}_T}
	\leq C,
	\end{aligned}
	\label{dif00a}
	\end{align}
	
	\noi
where the constants are coming from \eqref{256b} and \eqref{256a}.
Next, we need to estimate the difference $\|w_{K}\|_{N^{s-1,\dl}_T}$ in \eqref{Ldif2}, which follows from equation (\ref{257a}) and the condition $w(x,0) = 0$. 	
In particular, by combining equation (\ref{257a}) with equation (\ref{lc1}), we see that there exists a universal constant $\widetilde{C} > 0$ such that
	\begin{align*}
	\|   w_{K}\|_{N_{T}^{s-1,\dl}}
	\les \wt C \|   w_{K} \|_{ C_{T} H^{s-1}_{x}    }+
	O(\dl^{2}) K^{2} \|    v_{\g,K } \|_{C_T H_x^{s}},
	\end{align*}

	\noi
	as $\dl\to 0$.
	Moreover, as in equation (\ref{L118}), given $\eps > 0$, there exists $\delta_0 > 0$ such that for any $0 < \delta, \gamma < \delta_0$, the following holds:
	\begin{align}
	\|  w_{K}\|_{N_{T}^{s-1,\dl}}
	&\les \|   w_{K} \|_{ C_{T} H^{s-1}_{x}    }
	+\eps
	\label{dif00b}
	\end{align}

	\noi
Therefore, by substituting equations (\ref{dif00a}) and (\ref{dif00b}) into equation (\ref{Ldif2}), we obtain:
	\begin{align}
	\begin{aligned}
	\| w_{K} \|^2_{C_T H^{s-1}_x} &\les 
	\eps+
	Tc_{2}\|   w_{K} \|^{2}_{C_T H_x^{s-1}}\\
	&\quad+
	T^{\frac14}
	(    \|   w_{K} \|_{ C_{T} H^{s-1}_{x}    } +\eps )
	\|   w_{K} \|_{C_T H_x^{s-1}},
	\end{aligned}
	\label{fes1}
	\end{align}

\noi
where $c_{2}\ll 1$.
By applying Cauchy's inequality to the last term of the right-hand side of equation (\ref{fes1}), we can conclude that for any $\eps > 0$, there exists $\delta_0 > 0$ such that for any $0 < \delta, \gamma < \delta_0 < 1$, we have that
$	\| w_{K} \|_{C_T H^{s-1}_x} \les 	\eps$. As $\eps>0$ is arbitrary, the one-parameter family $\{ v_{\dl,K} \}_{\dl>0} $ is Cauchy in $C([0,T]; H^{s-1}(\mathbb{T}))$ as $\delta \to 0$. Hence, $ v_{\dl,K} $ converges to some function
$\widetilde{v}_{K} \in C([0,T]; H^{s-1}(\mathbb{T}))$. Additionally, given $\eps > 0$, we have
\begin{align*}
\| v_{\dl, K} - 	\wt v_{K} \|_{C_TH^{s}_x}
&\leq (2K)
\|  v_{\dl,K}-  \wt v_{K}   \|_{ C_{T}  H_{x}^{s-1}}
< {\eps}.
\end{align*}	
	
\noi
From equations (\ref{gILWs}) and (\ref{sys1}), it is clear that $\widetilde{v}_K$ is a solution to the initial value problem \eqref{TKDV}. Therefore, by uniqueness, it follows that $v_K = \widetilde{v}_K$, and this concludes the proof of equation \eqref{eq:sh1}.

When $k=2$ and $s>\frac 12$. We make the similarly changes as we saw in Proposition \ref{LM:ca1} we can conclude the proof.
	
\end{proof}

The final result of our analysis concerns the convergence of solutions to the gILW equation to those of the gKdV equation.

\begin{proposition}
	\label{PRO:maincay}

Let $k\geq 2$ and $s \geq \frac{3}{4}$.
Assume $v_0 \in H^s(\mathbb{T})$ and let $v_{\delta}$ denote the solution of  scaled gILW  \eqref{gILWs} with initial data $v_0$. Then, for any $0 < T < 1$, it follows that $v_{\delta} \to v$ in $C([0,T]; H^s(\mathbb{T}))$ as $\delta \to 0$, where $v$ is the solution of  gKdV  \eqref{KDV} with initial data $v_0$.	
Moreover, when $k=2$ only the statement holds for $s>\frac12$.

\end{proposition}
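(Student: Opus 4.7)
The strategy is a Bona--Smith / Koch--Tzvetkov style approximation argument combining the fixed-$K$ convergence of Proposition \ref{PRO:S1} with a uniform-in-$\dl$ high-frequency tail estimate from the frequency envelope technique. Inserting the truncated solutions $v_{\dl,K}$ of \eqref{Tcay} and $v_K$ of \eqref{TKDV}, both launched from $v_{0,K}=P_{\leq K}v_0$, one writes
\[
\|v_\dl - v\|_{C_TH^s} \leq \|v_\dl - v_{\dl,K}\|_{C_TH^s} + \|v_{\dl,K} - v_K\|_{C_TH^s} + \|v_K - v\|_{C_TH^s}.
\]
For each fixed $K$ the middle term vanishes as $\dl\to 0$ by Proposition \ref{PRO:S1}, so it suffices to make the two outer terms arbitrarily small by choosing $K$ large, uniformly in $0<\dl<1$.

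For the uniform tail control, apply Lemma \ref{LEM_ev} to construct an increasing dyadic sequence $\{\om_N\}$ with $\om_N\to\infty$ and $\|v_0\|_{H^s_\om}<\infty$ (possible for every $v_0\in H^s$; see Remark \ref{RM:cts}). Since $\|v_{0,K}\|_{H^s_\om}\leq \|v_0\|_{H^s_\om}$, Proposition \ref{PRO:EN1}(ii) applied at the $H^s_\om$ level to both \eqref{gILWs} and \eqref{Tcay} (its Littlewood--Paley/paraproduct manipulations are unaffected by the projector $P_{\leq K}$ in the nonlinearity, which commutes with the dispersion) yields
\[
\|v_\dl\|_{L^\infty_T H^s_\om} + \sup_{K\geq 1}\|v_{\dl,K}\|_{L^\infty_T H^s_\om} \les \|v_0\|_{H^s_\om},
\]
uniformly in $0<\dl<1$, with identical bounds on the gKdV-side solutions $v$ and $v_K$. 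A dyadic summation then gives
\[
\|P_{>K}v_\dl\|_{C_TH^s} \leq \om_K^{-1}\|v_\dl\|_{L^\infty_T H^s_\om}\longrightarrow 0 \quad\text{as } K\to\infty,
\]
uniformly in $\dl$, and similarly for $v$.

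To conclude $\|v_\dl - v_{\dl,K}\|_{C_TH^s}\to 0$ uniformly in $\dl$, decompose $v_\dl - v_{\dl,K} = P_{>K}v_\dl + w_K$ with $w_K := P_{\leq K}v_\dl - v_{\dl,K}$. The first summand is handled by the tail bound above. The remainder $w_K$ vanishes at $t=0$ (both functions equal $v_{0,K}$) and satisfies
\[
\dt w_K - \tfrac{3}{\dl}\G\dx^2 w_K = \dx P_{\leq K}\bigl(v_\dl^k - v_{\dl,K}^k\bigr).
\]
Writing $v_\dl^k - v_{\dl,K}^k = (v_\dl - v_{\dl,K})\,Q(v_\dl,v_{\dl,K})$ for a polynomial $Q$ of degree $k-1$, the $H^s$-algebra property (valid since $s>\tfrac12$) together with the uniform $H^s_\om$ bound gives
\[
\|v_\dl^k - v_{\dl,K}^k\|_{H^s} \les C\bigl(\|v_0\|_{H^s_\om}\bigr)\bigl(\|P_{>K}v_\dl\|_{H^s} + \|w_K\|_{H^s}\bigr).
\]
A standard $H^s$ energy identity (integration by parts annihilates the dispersive term, exactly as in the derivation of \eqref{FTC}) followed by Gronwall's inequality then yields $\|w_K\|_{C_TH^s}\les T\,\sup_\dl\|P_{>K}v_\dl\|_{C_TH^s}$, uniformly in $\dl$. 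The identical argument on the gKdV side (formally $\dl = 0$) controls $\|v_K - v\|_{C_TH^s}$.

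Given $\eps>0$, one first picks $K$ large enough that the outer two terms are each $\leq\eps/3$ uniformly in $\dl$, then chooses a threshold $\dl_0>0$ so that the middle term is $\leq\eps/3$ for $0<\dl<\dl_0$ via Proposition \ref{PRO:S1}, yielding $\|v_\dl - v\|_{C_TH^s}<\eps$. The main obstacle is the uniform-in-$\dl$ high-frequency tail estimate: the frequency envelope $\{\om_N\}$ must be fixed \emph{before} the truncation parameter $K$, Proposition \ref{PRO:EN1} must be shown to transfer verbatim to the truncated equation \eqref{Tcay} at the $H^s_\om$ level, and the resulting Koch--Tzvetkov decay must be combined with the stability estimate for $w_K$ through a careful Gronwall absorption. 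The case $k=2$, $s>\tfrac12$ proceeds identically, invoking the $k=2$ sub-statements of Propositions \ref{PRO:EN1}--\ref{PRO:EN2}.
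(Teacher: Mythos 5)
Your overall architecture coincides with the paper's: decompose $v_\dl - v$ through the truncated solutions $v_{\dl,K}$ and $v_K$, dispatch the middle term for fixed $K$ via Proposition \ref{PRO:S1}, and control the two outer terms uniformly in $\dl$ by a frequency-envelope (Koch--Tzvetkov) tail bound. The tail estimate $\|P_{>K}v_\dl\|_{C_TH^s}\les \om_K^{-1}\|v_0\|_{H^s_\om}$ is exactly the mechanism of Remark \ref{RM:cts}, and the paper packages the resulting statement as the uniform continuity of the flow map in Proposition \ref{PRO:LWP2} (resp.\ the local theory of \cite{MT} on the gKdV side).

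There is, however, a genuine gap in the step where you close the estimate for $w_K=P_{\leq K}v_\dl - v_{\dl,K}$. You claim that a ``standard $H^s$ energy identity followed by Gronwall'' yields $\|w_K\|_{C_TH^s}\les T\,\sup_\dl\|P_{>K}v_\dl\|_{C_TH^s}$. This does not close at $s\geq\frac34$ (let alone $s>\frac12$): after pairing $\dx P_{\leq K}(v_\dl^k-v_{\dl,K}^k)$ with $D^{2s}w_K$ and integrating by parts, one is left with terms of the schematic form $\int \dx(Q)\,(D^s w_K)^2\,dx$, whose absorption by Gronwall requires $\|\dx Q\|_{L^\infty}$, i.e.\ $s>\frac32$ --- precisely the classical restriction of \cite{ABFS} that the paper is designed to remove. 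This is why the paper's difference estimate (Proposition \ref{PRO:EN2}) is only established at the level $H^{s-1}$, relies on the short-time Strichartz estimates of Proposition \ref{PRO:stri} and the resonance/modulation decompositions of Lemmas \ref{lem_res1} and \ref{lem_res2}, and why the upgrade from $H^{s-1}$-Lipschitz dependence to $H^s$-continuity must go through the frequency-envelope interpolation rather than a direct $H^s$ Gronwall argument. The repair is to replace your Gronwall step by an appeal to Proposition \ref{PRO:LWP2}, whose proof (following \cite{MT}) already combines Propositions \ref{PRO:EN1} and \ref{PRO:EN2} with the envelope argument to give continuity of the data-to-solution map in $H^s(\T)$ uniformly in $\dl$, applied to the pair of data $v_0$ and $v_{0,K}=P_{\leq K}v_0$; the same remark applies to your treatment of $\|v_K-v\|_{C_TH^s}$ on the gKdV side, which the paper handles by citing the local well-posedness theory of \cite{MT}.
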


\begin{proof}

The argument will be the same for the general case when $k\geq 2$ with $s>\frac34$,
 and $k=2$ with $s>\frac12$.	
With loss of generality, we consider the case for $k\geq 2$, $s \geq \frac{3}{4}$.
Let $v_{0,K}$ convergence to  $v_0$ in $H^{s}(\T)$, as $K\to \infty$. Let $v_{\delta,K}$ denote the solution of equation (\ref{Tcay}) with initial data $v_{0,K}$, and let $v_K$ denote the solution of equation (\ref{TKDV}) with initial data $v_{0,K}$. Then, we have:
	\begin{align*}
	v_{\dl} - v&=
	(v_{\dl} -v_{\dl,K})
	+(v_{\dl,K} - P_{\leq N} v_{\dl,K} )\\
	&\quad+P_{\leq N}( v_{\dl,K} -v_{K} )+
	(P_{\leq N} v_{K}  -v_{K} )
	+ ( v_{K} -v)
	\end{align*}

\noi
According to the local well-posedness theory developed in \cite{MT}, for
any $\eps > 0$, there exists a sufficiently large value of $K_{1} = K_{1}(\eps)$ such that for $s \geq \frac34$ and $0 < T < 1$,
 it follows that:
	\begin{align}
	\label{fn1}
	\|v_{K_1} - v \|_{C_{T}  H^{s}_{x} } <\frac{\eps}{5}.
	\end{align}

	\noi 
Furthermore, according to Proposition \ref{PRO:LWP2}, for any $\eps > 0$, there exists a sufficiently large value of $K_{2} = K_{2}(\eps)$ such that for $s \geq \frac34$ and $0 < T < 1$, the following holds:
	\begin{align}
	\label{fn2}
	\|v_{\dl,K_2} - v_{\dl} \|_{C_{T}  H^{s}_{x} } <\frac{\eps}{5}
	\end{align}
	
	\noi 
	uniformly for all $0<\dl<1$.
Moreover, for a fixed $K > 0$ such that $K = \max(K_1,K_2)$ and a given $\eps > 0$, there exists a sufficiently close value of $N = N(\eps)$ to $K$ such that:
	\begin{align}
	\label{fn3}
	\|v_{\dl,K} - P_{\leq N} v_{\dl,K} \|_{C_{T}  H^{s}_{x} } +
	\|P_{\leq N} v_{K}  -v_{K} \|_{C_{T}  H^{s}_{x} }  <\frac{2\eps}{5}
	\end{align}

	\noi
Finally, assuming that we have selected positive constants $N$ and $K$ for a given $\eps > 0$ such that inequalities \eqref{fn1}, \eqref{fn2}, and \eqref{fn3} hold, it follows from Proposition \ref{PRO:S1} that as $\delta \to 0$,	
	\begin{align*}
	\| P_{\leq N} ( v_{\dl,K}  -  v_{K} )  \|_{C_TH^s_x}
	  <\frac{\eps}{5},
	\end{align*}

	\noi
As $\eps > 0$ was arbitrary, we can deduce that
\begin{align*}
\lim_{\delta \to 0}	\|  v_{\delta} - v  \|_{C_TH^s_x} =0,
\end{align*}
thereby completing the proof of the proposition for $k\geq 2$ and $s\geq \frac34$.

The proof for the case where $k=2$ and $s>\frac{1}{2}$ can be derived applying the same argument as in the above discussion.
\end{proof}

To this end, we finished the proof of Theorem \ref{THM:2}.

\begin{proof}[Proof of Corollary \ref{COR:4}]

To prove this corollary, let us consider a fixed BO initial condition $u_{\infty,0}$, and let $ T_\infty$ represent the local existence time of the BO equation. Based on our assumption, we have $u_{\dl,0} \to u_{\infty,0}$ in $H^{s}$.
Then,
there exists some $\dl_{1}\geq 2$ such that for any $\dl\geq \dl_1$, we have $\| u_{\dl,0} \|_{H^{s}} \leq 2 \|  u_{\infty,  0} \|_{H^{s} }$. Consequently, 
the uniform local well-posedness theorem implies the local existence time $T'$ for the ILW equation only depends on $ 2\|u_{\infty,  0}\|_{H^{s}}$, for any $\dl \geq \dl_1$. 
 We can then take $T=T_{\infty} \wedge T' $ to be the common local existence time for any $\dl \geq \dl_1$.

  The method developed in Section \ref{SEC:limit1} then can be used with an additional assumption on the convergence of the ILW initial data. Specifically, we may assume that $\{ u_{\dl,0} \}_{\dl\geq 2} \subset H^{s}(\M)$ forms a Cauchy sequence. In practice, when analysing 
the nonlinear interactions as described in equation \eqref{eq:dfmain},
in order to accommodate various initial conditions,
 it now needs to keep the difference in the initial data term, as presented in Proposition \ref{PRO:EN2}.
 However, this does not affect the validity of the proof, as our assumption of convergence of the initial data ensures that this difference is insignificant. The rest of the proof remains unaffected.

\end{proof}

\begin{ack}\rm 
The author extends his gratitude to his advisors, Tadahiro Oh and Yuzhao Wang, for posing this problem and for their constant support. 
Special thanks are given to Professor Jean-Claude Saut for his assistance in providing references \cite{KS2022, Saut2019} and for clarifying the commonly recognised name for the model equation. 
The author also appreciate the anonymous referees for their carefully reading and comments.  
G.L.~was supported by the Maxwell Institute Graduate School in Analysis and its Applications, 
a Centre for Doctoral Training funded by the UK Engineering and Physical Sciences Research Council (Grant EP/L016508/01), 
the Scottish Funding Council, Heriot-Watt University and the University of Edinburgh;
the EPSRC New Investigator Award (grant no. EP/S033157/1);
the European Research Council (grant no. 864138 ``SingStochDispDyn'').

\end{ack}

\end{document}